\def\phi{\varphi}
\def\epsilon{\varepsilon}
  \providecommand{\normtmp}[2]{#1\lVert{#2}#1\rVert}
  \providecommand{\norm}[1]{\normtmp{}{#1}}
  \providecommand{\bignorm}[1]{\normtmp{\big}{#1}}
  \providecommand{\Bignorm}[1]{\normtmp{\Big}{#1}}
  \providecommand{\biggnorm}[1]{\normtmp{\bigg}{#1}}
  \providecommand{\Biggnorm}[1]{\normtmp{\Bigg}{#1}}
  \providecommand{\abstmp}[2]{#1\lvert{#2}#1\rvert}
  \providecommand{\abs}[1]{\abstmp{}{#1}}
  \providecommand{\bigabs}[1]{\abstmp{\big}{#1}}
  \providecommand{\biggabs}[1]{\abstmp{\bigg}{#1}}
  \providecommand{\settmp}[2]{#1\{{#2}#1\}}
  \providecommand{\set}[1]{\settmp{}{#1}}
  \providecommand{\bigset}[1]{\settmp{\big}{#1}}
  \providecommand{\Bigset}[1]{\settmp{\Big}{#1}}
  \providecommand{\skptmp}[3]{\ensuremath{#1\langle {#2}, {#3} #1\rangle}}
  \providecommand{\skp}[2]{\skptmp{}{#1}{#2}}
\newcommand{\dashint}{\fint}
\numberwithin{equation}{section}
\theoremstyle{plain}
\newtheorem{theorem}[equation]{Theorem}
\newtheorem{lemma}[equation]{Lemma}
\newtheorem{proposition}[equation]{Proposition}
\newtheorem{corollary}[equation]{Corollary}
\theoremstyle{definition}
\newtheorem{definition}[equation]{Definition}
\newtheorem*{standingassumption}{Standing Assumptions}
\theoremstyle{remark}
\newtheorem{remark}[equation]{Remark}
 \providecommand{\loc}{{\ensuremath{\mathrm{loc}}}}
\newcommand{\Fsp}[2]{{F^{#1}_{#2}}}
\newcommand{\Fspq}[3]{{F^{#1}_{#2,\,#3}}}
\newcommand{\Bspq}[3]{{{B}^{#1}_{#2,\,#3}}}
\newcommand{\fsp}[2]{{f^{#1}_{#2}}}
\newcommand{\fspq}[3]{{f^{#1}_{#2,\,#3}}}
\newcommand{\lqxnu}{{l^{q(x)}_\nu}}
\newcommand{\lqxmu}{{l^{q(x)}_\mu}}
\newcommand{\Lpdotx}{{L^{p(\cdot)}_x}}
\newcommand{\lqrxnu}{{l^{\frac{q(x)}{r}}_\nu}}
\newcommand{\lqrxmu}{{l^{\frac{q(x)}{r}}_\mu}}
\newcommand{\Lprdotx}{{L^{\frac{p(\cdot)}{r}}_x}}
\newcommand{\R}{\mathbb{R}}
\newcommand{\N}{\mathbb{N}}
\newcommand{\Z}{\mathbb{Z}}
\newcommand{\C}{\mathbb{C}}
\newcommand{\Rn}{{\mathbb{R}^n}}
\def\ln{\log}
\renewcommand{\le}{\leqslant}
\renewcommand{\ge}{\geqslant}
\renewcommand{\leq}{\leqslant}
\renewcommand{\geq}{\geqslant}
 \DeclareMathOperator{\supp}{supp}
\newcommand{\cS}{\mathcal{S}}
\newcommand{\ds}{\displaystyle}
\newcommand{\ip}[2]{\left\langle#1,#2\right\rangle}
\def\esssup{\operatornamewithlimits{ess\,sup}}
\def\essinf{\operatornamewithlimits{ess\,inf}}
  \DeclareMathOperator{\trace}{tr}
\begin{document}


\title{Function spaces of variable smoothness and integrability}

\author[L.\ Diening]{Lars Diening$^*$}
\thanks{$^*$ Supported in part by the Landesstiftung
  Baden-W{\"u}rttemberg}
\address{Section of Applied Mathematics, Eckerstra\ss{}e~1,
Freiburg University\\ 79104~Frei\-burg/Breis\-gau, Germany}
\email{diening[]mathematik.uni-freiburg.de}
\urladdr{http://www.mathematik.uni-freiburg.de/IAM/homepages/diening/}

\author[P.\ H\"ast\"o]{Peter H{\"a}st{\"o}$^\dagger$}
\address{Department of Mathematical Sciences,
P.O.~Box~3000\\ FI-90014 University of Oulu, Finland}
\email{peter.hasto[]helsinki.fi}
\urladdr{http://cc.oulu.fi/$\sim$phasto/}
\thanks{$^\dagger$ Supported in part by the Academy of Finland}

\author[S.\ Roudenko]{Svetlana Roudenko$^\ddagger$}
\address{Department of Mathematics, Arizona State University \\
Tempe, AZ~85287-1804, USA}
\email{svetlana[]math.asu.edu}
\urladdr{http://math.asu.edu/$\sim$svetlana}
\thanks{$^\ddagger$ Partially supported by the NSF grant DMS-0531337}

\begin{abstract}
  In this article we introduce Triebel--Lizorkin spaces with variable
  smoothness and integrability. Our new scale covers spaces with
  variable exponent as well as spaces of variable smoothness that have
  been studied in recent years. Vector-valued maximal inequalities do
  not work in the generality which we pursue, and an alternate
  approach is thus developed. Applying it, we give molecular and atomic
  decomposition results and show that our space is well-defined, i.e.,
  independent of the choice of basis functions.

  As in the classical case, a unified scale of spaces permits clearer
  results in cases where smoothness and integrability interact, such
  as Sobolev embedding and trace theorems.  As an application of our
  decomposition we prove optimal trace theorems in the variable
  indices case.
\end{abstract}


\keywords{
Triebel--Lizorkin spaces, variable indices, non-standard growth,
decomposition, molecule, atom, trace spaces}
\subjclass[2000]{46E35; 46E30, 42B15, 42B25}

\maketitle

\setcounter{tocdepth}{1}


\section{Introduction}
\label{sect:introduction}

{}From a vast array of different function spaces a well ordered
superstructure appeared in the 1960's and 70's based on two
three-index spaces: the Besov space $B^\alpha_{p,q}$ and the
Triebel--Lizorkin space $F^\alpha_{p,q}$.  In recent years there has
been a growing interest in generalizing classical spaces such as
Lebesgue and Sobolev spaces to the case with either variable
integrability (e.g.,\ $W^{1, p(\cdot)}$) or variable smoothness (e.g.,\
$W^{m(\cdot), 2}$). These generalized spaces are obviously not covered
by the superstructures with fixed indices.

It is well-known from the classical case that smoothness and
integrability often interact, for instance, in trace and embedding
theorems. However, there has so far been no attempt to treat spaces
with variable integrability and smoothness in one scale.  In this
article we address this issue by introducing Triebel--Lizorkin spaces
with variable indices, denoted
$\Fspq{\alpha(\cdot)}{p(\cdot)}{q(\cdot)}$.

Spaces of variable integrability can be traced back to 1931 and W.\
Orlicz \cite{Orl31}, but the modern development started with the
paper \cite{KR} of Kov{\'a}{\v c}ik and R{\'a}kosn{\'\i}k in 1991. A
survey of the history of the field with a bibliography of more than
a hundred titles published up to 2004 can be found in \cite{DHN} by
Diening, H{\"a}st{\"o} \& Nekvinda; further surveys are due to Samko
\cite{S_survey} and Mingione \cite{Min_surv}. Apart from interesting
theoretical considerations, the motivation to study such function
spaces comes from applications to fluid dynamics, image processing,
PDE and the calculus of variation.

The first concrete application arose from a model of electrorheological
fluids in \cite{RajR96} (cf.\ \cite{AM2002,AM2002_CR,Ru,Ru2} for
mathematical treatments of the model). To give the reader a feeling
for the idea behind this application we mention that an electrorheological
fluid is a so-called smart material in which the viscosity depends on the
external electric field. This
dependence is expressed through the variable exponent $p$;
specifically, the motion of the fluid is described by a
Navier--Stokes-type equation where the Laplacian $\triangle u$ is
replaced by the $p(x)$-Laplacian $\operatorname{div}(|\nabla
u|^{p(x)-2} \nabla u)$. By standard arguments, this means that
the natural energy space of the problem is $W^{1,p(\cdot)}$,
the Sobolev space of variable integrability.
For further investigations of these differential equations
see, e.g., \cite{AM4,DR,Fan07}.

More recently, an application to image restoration was proposed by Chen, Levine \&
Rao \cite{CLR,Lev}. Their model combines isotropic and total
variation smoothing. In essence, their model requires the
minimization over $u$ of the energy
\begin{align*}
\int_{\Omega} |\nabla u(x)|^{p(x)} + \lambda
|u(x)-I(x)|^2\, dx,
\end{align*}
where $I$ is given input. Recall that in the constant
exponent case, the power $p\equiv 2$ corresponds to isotropic smoothing,
whereas $p\equiv 1$ gives total variation smoothing. Hence
the exponent varies between these two extremes in
the variable exponent model.
This variational problem has an
Euler-Lagrange equation, and the solution can be found
by solving a corresponding evolutionary PDE.

Partial differential equations have also been studied from a more
abstract and general point of view in the variable exponent setting.
In analogy to the classical case, we can approach boundary value
problems through a suitable trace space, which, by definition, is a
space consisting of restrictions of functions to the boundary. For
the Sobolev space $W^{1,p(\cdot)}$, the trace space was first
characterized by first two authors by an intrinsic norm, see
\cite{DH}. In analogy with the classical case, this trace space can
be formally denoted $F^{1-1/p(\cdot)}_{p(\cdot),p(\cdot)}$, so it is
an example of a space with variable smoothness and integrability,
albeit on with a very special relationship between the two
exponents. Already somewhat earlier Almeida \& Samko \cite{AlmS06}
and Gurka, Harjulehto \& Nekvinda \cite{GHN} had extended variable
integrability Sobolev spaces to Bessel potential spaces
$W^{\alpha,p(\cdot)}$ for constant but non-integer
$\alpha$.\footnote{After the completion of this paper we learned
that Xu \cite{Xu_pp07a,Xu_pp07b} has studied Besov and
Triebel--Lizorkin spaces with variable $p$, but fixes $q$ and
$\alpha$. The results in two subsections of Section~\ref{sect:unification}
were proved independently in \cite{Xu_pp07b}.
However, most of the advantages of unification
do not occur with only $p$ variable: for instance, trace spaces
cannot be covered, and spaces of variable smoothness are not
included. Therefore Xu's work does not essentially overlap with the
results presented here.}

Along a different line of study, Leopold \cite{Leo89a,Leo89b,Leo91,Leo99}
and Leopold \& Schrohe \cite{LeoS96} studied pseudo-differential
operators with symbols of the type $\langle \xi^{m(x)}\rangle$, and
defined related function spaces of Besov-type with variable
smoothness, formally $B^{m(\cdot)}_{p,p}$. In the case $p=2$, this
corresponds to the Sobolev space $H^{m(\cdot)} = W^{m(x),2}$.
Function spaces of variable smoothness have recently been studied by
Besov \cite{Be97,Be99,Be03,Be05}. He generalized Leopold's work by
considering both Triebel--Lizorkin spaces $\Fspq{\alpha(\cdot)}{p}{q}$
and Besov spaces $\Bspq{\alpha(\cdot)}{p}{q}$ in $\Rn$. In a
recent preprint, Schneider and Schwab \cite{SS} used
$H^{m(\cdot)}(\R)$ in the analysis of certain Black--Scholes
equations. In this application the variable smoothness corresponds to
the volatility of the market, which surely should change with time.


The purpose of the present paper is to define and study a
generalized scale of Triebel--Lizorkin type spaces with variable
smoothness, $\alpha(x)$, and variable primary and secondary indices
of integrability, $p(x)$ and $q(x)$. By setting some of the indices to
appropriate values we recover all previously mentioned
spaces as special cases, except the Besov spaces (which, like in
the classical case, form a separate scale).

Apart from the value added through unification, our new space allows
treating traces and embeddings in a uniform and comprehensive
manner, rather than doing them case by case. Some particular examples
are:
\begin{itemize}
\item
The trace space of $W^{k,p(\cdot)}$ is no longer a space of the
same type. So, if we were interested in the trace space of the
trace space, the theory of \cite{DH} no longer applies,
and thus, a new theory is needed. In contrast to this, as
we show in Section~\ref{sect:traces}, the trace of a
Triebel--Lizorkin space is again a Triebel--Lizorkin space (also in
the variable indices case), hence, no such problem occurs.
\item
Our approach allows us to use
the so-called ``$r$-trick'' (cf.\ Lemma~\ref{lem:est_g}) to study
spaces with integrability in the range $(0,\infty]$, rather than in the
range $[1,\infty]$.
\item
It is well-known that the constant exponent Triebel--Lizorkin space
$F^0_{p,2}$ corresponds to the Hardy space $H^p$ when $p\in (0, 1]$.
Hardy spaces have thus far not been studied in the variable exponent
case. Therefore, our formulation opens the door to this line of
investigation.
\end{itemize}

When generalizing Triebel--Lizorkin spaces, we have several
obstacles to overcome. The main difficulty is the absence of the
vector-valued maximal function inequalities. It turns out that the
inequalities are not only missing, rather, they do not even hold in
the variable indices case (see Section~\ref{sect:multipliers}). As a
consequence of this, the H{\"o}rmander--Mikhlin multiplier theorem
does not apply in the case of variable indices.  Our solution is to
work in closer connection with the actual structure of the space
with what we call $\eta$-functions and to derive suitable estimates
directly for these functions.

The structure of the article is as follows: we first briefly
recapitulate some standard definitions and results in the next
section. In Section~\ref{sect:mainresults} we state our main results:
atomic and molecular decomposition of Triebel--Lizorkin spaces, a
trace theorem, and a multiplier theorem. In
Section~\ref{sect:unification} we show that our new scale is indeed a
unification of previous spaces, in that it includes them all as
special cases with appropriate choices of the indices. In
Section~\ref{sect:multipliers} we formulate and prove an appropriate
version of the multiplier theorem. In Section~\ref{sect:decomposition}
we give the proofs of the main decompositions theorems, and in
Section~\ref{sect:traces} we discuss the trace theorem. Finally, in
Appendix~\ref{sect:technical} we derive several technical lemmas
that were used in the other sections.


\section{Preliminaries}
\label{sect:prelim}

For $x\in \Rn$ and $r>0$ we denote by $B^n(x,r)$ the open ball in
$\Rn$ with center $x$ and radius $r$.  By $B^n$ we denote the unit
ball $B^n(0,1)$.  We use $c$ as a generic constant, i.e., a
constant whose values may change from appearance to appearance. The
inequality $f \approx g$ means that $\frac1c g \le f\le c g$ for
some suitably independent constant $c$. By $\chi_A$ we denote the
characteristic function of the set $A$. If $a\in \R$, then we use
the notation $a_+$ for the positive part of $a$, i.e., $a_+ =
\max\{0,a\}$. By $\N$ and $\N_0$ we denote the sets of positive and
non-negative integers. For $x\in \R$ we denote by $\lfloor x\rfloor$
the largest integer less than or equal to $x$.

We denote the mean-value of the integrable function $f$, defined on a
set $A$ of finite, non-zero measure, by
\begin{align*}
  \dashint\nolimits_A f(x)\, dx = \frac{1}{|A|} \int_A f(x)\, dx.
\end{align*}
The Hardy-Littlewood maximal operator $M$ is defined on
$L^1_\loc(\Rn)$ by
\begin{align*}
  M f(x) = \sup_{r>0}\, \dashint_{B^n(x,r)} \!\!\abs{f(y)}\,dy.
\end{align*}
By $\supp f$ we denote the support of the function $f$, i.e., the
closure of its zero set.


\subsection*{Spaces of variable integrability}
\label{sec:lpx}

By $\Omega\subset \Rn$ we always denote an open set. By a
\textit{variable exponent} we mean a measurable bounded function $p\colon
\Omega\to (0,\infty)$ which is bounded away from zero. For
$A\subset \Omega$ we denote $p_A^+ =\esssup_A p(x)$ and
$p_A^-=\essinf_A p(x)$; we abbreviate $p^+=p_\Omega^+$ and
$p^-=p_\Omega^-$.  We define {\it the modular} of a measurable function $f$
to be
\begin{align*}
  \varrho_{L^{p(\cdot)}(\Omega)}(f)=\int_{\Omega}|f(x)|^{p(x)}\,dx.
\end{align*}

The \emph{variable exponent Lebesgue space} $L^{p(\cdot)}(\Omega)$
consists of all measurable functions $f\colon \Omega\to\R$ for
which $\varrho_{L^{p(\cdot)}(\Omega)}(f)<\infty$.
We define {\it the Luxemburg norm} on this space by
\begin{align*} \| f\|_{L^{p(\cdot)}(\Omega)} =
\inf\big\{\lambda>0 \colon
\varrho_{L^{p(\cdot)}(\Omega)}(f/\lambda)\leqslant 1\big\},
\end{align*}
which is the Minkowski functional of the absolutely convex set
$\set{f\,:\, \varrho_{L^{p(\cdot)}(\Omega)}(f) \leqslant 1}$.  In
the case when $\Omega=\Rn$ we replace the $L^{p(\cdot)}(\Rn)$ in
subscripts simply by $p(\cdot)$, e.g.\ $\|f\|_{p(\cdot)}$ denotes
$ \|f\|_{L^{p(\cdot)}(\Rn)}$. The \emph{variable exponent Sobolev
space} $W^{1,p(\cdot)}(\Omega)$ is the subspace of
$L^{p(\cdot)}(\Omega)$ of functions $f$ whose distributional
gradient exists and satisfies $|\nabla f|\in L^{p(\cdot)}(\Omega)$.
The norm
\begin{align*}
\|f\|_{W^{1,p(\cdot)}(\Omega)}=\|f\|_{L^{p(\cdot)}(\Omega)} +
\|\nabla f\|_{L^{p(\cdot)}(\Omega)}
\end{align*}
makes $W^{1,p(\cdot)}(\Omega)$ a Banach space.

For fixed exponent spaces we of course have a very simple
relationship between the norm and the modular. In the variable
exponent case this is not so.  However, we have nevertheless the
following useful property: $\varrho_{p(\cdot)}(f) \leqslant 1$ if
and only if $\|f\|_{p(\cdot)} \leqslant 1$.  This and many other
basic results were proven in \cite{KR}.
\begin{definition}
  \label{def:loghoelder}
  Let $g \in C(\R^n)$. We say that $g$ is {\em locally
    $\log$-H{\"o}lder continuous}, abbreviated $g \in
  C^{\ln}_\loc(\R^n)$, if there exists $c_{\log}>0$ such that
  \begin{align*}
    \abs{g(x)-g(y)} \leq \frac{c_{\log}}{\ln (e + 1/\abs{x-y})}
  \end{align*}
  for all $x,y \in \R^n$.

We say that $g$ is {\em globally $\log$-H{\"o}lder continuous},
abbreviated $g \in C^{\ln}(\R^n)$, if it is locally
$\log$-H{\"o}lder continuous and there exists $g_\infty \in \R$
such that
\begin{align*}
\abs{g(x) - g_\infty} &\leq \frac{c_{\log}}{\ln(e
+ \abs{x})}
\end{align*}
for all $x \in \R^n$.
\end{definition}

Note that $g$ is globally $\log$-H{\"o}lder continuous if and only if
  \begin{align*}
    \abs{g(x)-g(y)} \leq \frac{c}{\abs{\ln \tfrac12 q(x,y)}}
  \end{align*}
  for all $x,y\in \overline{\Rn}$, where $q$ denotes the
  spherical-chordal metric (the metric inherited from a projection to the
Riemann sphere), hence the name, global $\log$-H{\"o}lder continuity.

Building on \cite{CFN} and \cite{D2} it is shown
in \cite[Theorem~3.6]{DieHHMS_pp07} that
\begin{align*}
M \colon L^{p(\cdot)}(\Rn) \hookrightarrow L^{p(\cdot)}(\Rn)
\end{align*}
is bounded if $p \in C^{\log}(\Rn)$ and $1 < p^- \leq p^+ \leq \infty$.
Global $\log$-H{\"o}lder
continuity is the best possible modulus of continuity to imply the
boundedness of the maximal operator, see \cite{CFN,PR}. However, if
one moves beyond assumptions based on continuity moduli, it is
possible to derive results also under weaker assumptions, see
\cite{Di4,Le,Ne}.


\subsection*{Partitions}

Let $\mathcal{D}$ be the collection of dyadic cubes in $\R^n$ and
denote by $\mathcal{D}^+$ the subcollection of those dyadic cubes with
side-length at most $1$.  Let $\mathcal{D}_{\nu} = \{Q \in
\mathcal{D}: \ell(Q) = 2^{-\nu} \}$.
For a cube $Q$ let $\ell(Q)$ denote the side length of $Q$ and $x_Q$
the ``lower left corner''.  For $c>0$, we let $cQ$ denote the cube
with the same center and orientation as $Q$ but with side length
$c \ell(Q)$.

The set $\mathcal{S}$ denotes the usual Schwartz space of rapidly
decreasing complex-valued functions and $\mathcal{S}'$ denotes the dual
space of tempered distributions. We denote the Fourier transform
of $\phi$ by $\hat \phi$ or $\mathcal{F} \phi$.


\begin{definition}\label{phiDef}
  We say a pair $(\phi, \Phi)$ is \textit{admissible} if $\phi, \Phi \in
  \mathcal{S}(\R^n)$ satisfy
  \begin{itemize}
  \item $\supp\hat{\phi} \subseteq \{ \xi \in
    \R^n :~\frac12 \leq |\xi| \leq 2\}$  and $|\hat{\phi}(\xi)|
    \geq c > 0$ when $\frac35 \leq |\xi| \leq \frac53$,
  \item $\supp \hat{\Phi} \subseteq \{ \xi \in \R^n :
    |\xi| \leq 2\}$ and $\ds |\hat{\Phi}(\xi)| \geq c > 0$ when $|\xi|
    \leq \tfrac53$.
  \end{itemize}
  We set $\phi_{\nu}(x) = 2^{\nu n} \phi(2^{\nu}x)$ for $\nu \in
  \N$ and $\phi_0(x)=\Phi(X)$. For $Q \in \mathcal{D}_\nu$ we set
  \begin{align*}
\phi_Q(x)=
\begin{cases}
|Q|^{1/2} \phi_{\nu}(x - x_Q) &\quad\text{if } \nu\ge 1,\\
|Q|^{1/2} \Phi(x - x_Q) &\quad\text{if } \nu = 0.
\end{cases}
\end{align*}
We define $\psi_\nu$ and $\psi_Q$ analogously.
\end{definition}

Following \cite{FraJa}, given an admissible pair $(\phi, \Phi)$ we
can select another admissible pair $(\psi, \Psi)$ such that
\begin{equation*}
  \hat{\tilde{\Phi}}(\xi) \cdot \hat{\Psi}(\xi) + \sum_{\nu \geq 1}
  {\hat{\tilde{\phi}}(2^{-\nu}\xi)} \cdot \hat{\psi}(2^{-\nu}\xi) = 1
  \quad\text{for all }\xi.
\end{equation*}
Here, $\tilde{\Phi}(x) = \overline{\Phi(-x)}$ and similarly for
$\tilde\phi$.

For each $f \in \cS'(\R^n)$ we define the ({\it inhomogeneous})
$\phi$-{\it transform} $S_{\phi}$ as the map taking ${f}$
to the sequence $(S_{\phi} {f})_{Q\in \mathcal{D}^+}$
by setting $\ds (S_{\phi} {f}\,)_Q = \ip{{f}}{\phi_Q}$.
Here, $\skp{{\cdot}}{\cdot}$ denotes the usual inner
product on $L^2(\R^n;\C)$.
For later purposes note that
$(S_\phi f)_Q = |Q|^{1/2} \tilde{\phi}_{\nu} \ast f(2^{-\nu} k)$
for $l(Q) = 2^{-\nu} < 1$ and
$(S_\phi f)_Q = |Q|^{1/2} \tilde{\Phi} \ast f(2^{-\nu} k)$
for $l(Q) = 1$.

The {\it inverse} ({\it inhomogeneous}) $\phi$-{\it transform}
$T_{\psi}$ is the map taking a sequence $s=\{s_Q\}_{l(Q) \leq
1}$ to $\ds T_{\psi} s = \sum_{l(Q)=1} s_Q \Psi_Q +\sum_{l(Q)
<1} s_Q \psi_Q$. We have the following identity for $f \in
\cS'(\R^n)$:
\begin{equation}
  f = \sum_{Q\in \mathcal{D}_0} \ip{f}{\Phi_Q} \Psi_Q + \sum_{\nu=1}^{\infty}
  \sum_{Q\in \mathcal{D}_\nu} \ip{f}{\phi_Q} \psi_Q.
  \label{inhomFJdecomp}
\end{equation}
Note that we consider all distributions in $\cS'(\R^n)$ (rather than
$\cS'/\mathcal{P}$ as in the homogeneous case), since $\hat{\Phi}(0)
\neq 0$.

Using the admissible functions $(\phi,\Phi)$ we can define the
norms
  \begin{align*}
    \norm{f}_{\Fspq{\alpha}{p}{q}} =
    \Bignorm{ \bignorm{ 2^{\nu \alpha}\, \phi_\nu \ast
        f}_{l^q} }_{L^p}
\quad\text{and}\quad
    \norm{f}_{\Bspq{\alpha}{p}{q}} =
    \Bignorm{ \bignorm{ 2^{\nu \alpha}\, \phi_\nu \ast
        f }_{L^p}}_{l^q},
  \end{align*}
  for constants $p,q\in (0,\infty)$ and $\alpha\in \R$.
  The Triebel--Lizorkin space $\Fspq{\alpha}{p}{q}$
  and the Besov space $\Bspq{\alpha}{p}{q}$ consists of
  distributions $f\in \cS'$ for which $ \norm{f}_{\Fspq{\alpha}{p}{q}}<\infty$
  and $\norm{f}_{\Bspq{\alpha}{p}{q}}<\infty$, respectively.
  The classical theory of these spaces is presented for instance in the
  books of Triebel \cite{Tr1,Tr2,Tr3}.
  The discrete representation as sequence spaces through the $\phi$-transform
is due to Frazier and Jawerth \cite{FJ1,FraJa}.
Recently, anisotropic and weighted versions of these
  spaces have been studied by many people, see, e.g.,
  Bownik and Ho \cite{BowH06}, Frazier and Roudenko \cite{Rou03,FraR05},
  Kühn, Leopold, Sickel and Skrzypczak \cite{KuhLSS07}, and
  the references therein.
  We now move on to generalizing these definitions to the variable
  index case.


\section{Statement of the main results}
\label{sect:mainresults}

In this section we introduce the main tool of this paper,
a decomposition of the Triebel--Lizorkin space into molecules or
atoms and state other important results. Section~\ref{sect:unification}
contains further main results: there we show that
previously studied spaces are indeed included in our scale.
The proofs of the results from this section constitute much of the
remainder of this article.

Throughout the paper we use the following

\begin{standingassumption}
  \label{ass:pqalpha}
  We assume that $p,q$ are positive functions on $\R^n$ such that
  $\frac{1}{p}, \frac{1}{q} \in C^{\ln}(\Rn)$. This implies, in particular,
  $0 < p^- \leq p^+ < \infty$ and $0 < q^- \leq q^+ < \infty$.  We
  also assume that $\alpha \in C^{\ln}_\loc(\Rn) \cap L^\infty(\Rn)$ with
  $\alpha\geq 0$ and that $\alpha$ has a limit at infinity.
\end{standingassumption}

One of the central classical tools that we are missing in the variable
integrability setting is a general multiplier theorem of
Mikhlin--H{\"o}rmander type. We show in Section~\ref{sect:multipliers}
that a general theorem does not hold, and instead prove the
following result which is still sufficient to work with
Triebel--Lizorkin spaces.

For a family of functions $f_\nu\,:\,\R^n \to \R$, $\nu \geq
0$, we define
\begin{align*}
  \bignorm{ f_\nu(x)}_{\lqxnu} &= \bigg( \sum_{\nu \geq 0}
  \abs{f_\nu(x)}^{q(x)}\bigg)^{\frac{1}{q(x)}}.
\end{align*}
Note that this is just an ordinary discrete Lebesgue space, since $q(x)$ does
not depend on~$\nu$. The mapping $x\mapsto\norm{f_\nu(x)}_{\lqxnu}$ is
a function of~$x$ and can be measured in $L^{p(\cdot)}$. We write
$L^{p(\cdot)}_x$ to indicate that the integration variable is~$x$.
We define
\begin{equation}
 \label{E:nu}
\eta_m(x)=(1+|x|)^{-m} \quad \text{and} \quad
\eta_{\nu,m}(x) = 2^{n\nu} \eta_m(2^\nu x).
\end{equation}

\begin{theorem}
  \label{thm:eta}
  Let $p,q \in C^{\ln}(\R^n)$ with $1 < p^- \leq p^+ < \infty$
  and $1 < q^- \leq q^+ < \infty$. Then the inequality
  \begin{align*}
    \Bignorm{ \bignorm{ \eta_{\nu,m} \ast f_\nu }_{\lqxnu} }_{\Lpdotx}
    &\leq c\, \Bignorm{ \bignorm{ f_\nu }_{\lqxnu} }_{\Lpdotx}
  \end{align*}
  holds for every sequence $\set{f_\nu}_{\nu \in \N_0}$ of
  $L^1_\loc$-functions and constant $m>n$.
\end{theorem}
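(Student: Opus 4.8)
The plan is to reduce the vector-valued convolution estimate to a scalar pointwise bound controlled by the Hardy–Littlewood maximal operator, and then invoke the boundedness of $M$ on $L^{p(\cdot)}$ together with a vector-valued maximal inequality on $L^{p(\cdot)}(l^{q(\cdot)})$. First I would recall the elementary fact that convolution against the normalized bump $\eta_{\nu,m}$ is dominated by the maximal function when $m>n$: there is a constant $c=c(n,m)$ such that
\begin{align*}
  \abs{\eta_{\nu,m} \ast g(x)} \le c\, Mg(x)
\end{align*}
for every $\nu\ge 0$ and every $g\in L^1_\loc(\R^n)$. This is standard and follows by writing $\eta_{\nu,m}\ast g$ as a sum over dyadic annuli $\{2^{k-1}\le 2^\nu\abs{y}\le 2^k\}$ and estimating each piece by $2^{-(m-n)k}$ times an average of $\abs{g}$ over a ball of radius $\sim 2^{k-\nu}$, using $m>n$ for summability.

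Applying this with $g=f_\nu$ for each $\nu$, I obtain the pointwise bound $\abs{\eta_{\nu,m}\ast f_\nu(x)} \le c\, Mf_\nu(x)$, and hence, since $q(x)\ge q^->1$ is bounded, the monotonicity of $l^{q(x)}$ norms gives
\begin{align*}
  \bignorm{\eta_{\nu,m}\ast f_\nu}_{\lqxnu} \le c\, \bignorm{Mf_\nu}_{\lqxnu}
\end{align*}
pointwise in $x$. Taking the $L^{p(\cdot)}$ norm reduces the theorem to the vector-valued maximal inequality
\begin{align*}
  \Bignorm{\bignorm{Mf_\nu}_{\lqxnu}}_{\Lpdotx} \le c\, \Bignorm{\bignorm{f_\nu}_{\lqxnu}}_{\Lpdotx},
\end{align*}
which holds under exactly the hypotheses $p,q\in C^{\ln}$, $1<p^-\le p^+<\infty$, $1<q^-\le q^+<\infty$; this is the variable-exponent Fefferman–Stein inequality, established in the references cited in the preliminaries (building on \cite{CFN,D2} and proved in \cite{DieHHMS_pp07}, cf.\ also \cite{KuhLSS07} in the present bibliography for the $l^{q(\cdot)}$-valued formulation). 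The main obstacle here is precisely that one must already have this vector-valued maximal theorem available; the pointwise domination step is routine, so the content of Theorem~\ref{thm:eta} is essentially the statement that the $\eta$-function convolutions inherit the mapping properties of $M$. (It is worth noting that later in the paper the authors deliberately avoid using $M$ at all because the vector-valued inequality \emph{fails} once $\alpha$ and the full generality of indices are allowed — but under the restricted hypotheses of this theorem, with $p^-,q^->1$ and no smoothness weight, it is still valid and gives the cleanest route.)

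An alternative, self-contained route that does not cite the vector-valued maximal theorem as a black box would be: establish the scalar bound $\norm{\eta_{\nu,m}\ast g}_{p(\cdot)} \le c\norm{g}_{p(\cdot)}$ (from $M$ bounded on $L^{p(\cdot)}$), then handle the $l^{q(\cdot)}$ sum by the same dyadic-annulus decomposition of $\eta_{\nu,m}$ but carried out \emph{uniformly in $\nu$}, so that after summing the geometric factor $2^{-(m-n)k}$ one is left with bounding $\norm{\,\norm{M_{2^{k-\nu}}f_\nu}_{\lqxnu}}_{\Lpdotx}$ where $M_r$ is the average over balls of radius $r$; a convexity/averaging argument in the $q(x)$-norm (using $q^->1$ so that $l^{q(x)/1}$ duality or the triangle inequality applies) then reduces this to the scalar estimate applied with $x$ fixed and finishes the proof. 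Either way the heart of the matter is the interplay between the boundedness of $M$ on $L^{p(\cdot)}$ and the boundedness on $l^{q(\cdot)}$, which is exactly what the $\log$-Hölder hypotheses on $p$ and $q$ and the conditions $p^->1$, $q^->1$ are there to guarantee.
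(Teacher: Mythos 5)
Your main route collapses at its central step. The inequality you invoke,
\begin{align*}
  \Bignorm{\bignorm{Mf_\nu}_{\lqxnu}}_{\Lpdotx} \leq c\, \Bignorm{\bignorm{f_\nu}_{\lqxnu}}_{\Lpdotx},
\end{align*}
is precisely the variable-$q$ vector-valued maximal inequality that this paper shows is \emph{false}: Section~\ref{sect:multipliers} gives a counterexample in which $q$ takes two distinct values $q_0\neq q_1$ on separated regions; setting $f_k=a_k\chi_{\Omega_0}$ forces $Mf_k\geq c\,a_k$ on $\Omega_1$, and validity of the inequality would imply $l^{q_0}\cong l^{q_1}$. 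Your parenthetical claim that the inequality ``is still valid'' under the hypotheses of Theorem~\ref{thm:eta} is not correct: the counterexample uses constant $p$, no smoothness weight $\alpha$, and exponents with $1<q^-\leq q^+<\infty$ (and it survives smoothing $q$ to be $\log$-H\"older, since only the values of $q$ on two fixed separated balls matter). The references you cite do not contain an $l^{q(\cdot)}$-valued Fefferman--Stein theorem; Lemma~\ref{CFMPvectorThm} is only for constant $q$. The entire point of Theorem~\ref{thm:eta} is that convolution with $\eta_{\nu,m}$ in the $\nu$-th slot is strictly weaker than $M$, because $\eta_{\nu,m}$ transports only an exponentially small amount of mass ($2^{-j(m-n)}$) across distance $2^{j-\nu}$, whereas $M$ transports mass across arbitrary distances with no decay.

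Your ``alternative, self-contained route'' gestures at the right objects (dyadic annuli, averages $M_Q f_\nu$ at scale $2^{j-\nu}$) but the phrase ``a convexity/averaging argument in the $q(x)$-norm \ldots reduces this to the scalar estimate applied with $x$ fixed'' is exactly where the difficulty lives and is not a proof. The obstruction is that $(M_Q f_\nu)^{q(x)}$ involves values $f_\nu(y)$ at points where $q(y)\neq q(x)$. The paper's argument needs three nontrivial ingredients you do not supply: the pointwise self-improving estimate $Mf(x)^{p(x)}\leq c\,M(\abs{f}^{p(\cdot)})(x)+\min\{\abs{Q},1\}h(x)$ with $h\in\mathrm{weak}\text{-}L^1\cap L^\infty$ (Lemma~\ref{cor:max_p_inside4}, applied with exponent $q(\cdot)/q^-$) to convert $(M_Qf_\nu)^{q(x)}$ into $M_Q(\abs{f_\nu}^{q(\cdot)/q^-})^{q^-}$ plus a controllable error; the \emph{constant}-exponent vector-valued inequality at level $q^-$ for the main term; and a partition of $\Rn$ into finitely many regions on which $(p/q)^-q^->1$, with the long-range scales $j>\nu+l$ handled separately by the scalar maximal inequality and the geometric decay $2^{-j(m-n)}$. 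Without these, the argument does not close.
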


\begin{definition}
  \label{def:Fbetapq}
  Let $\phi_\nu$,  $\nu \in \N_0$, be as in
  Definition~\ref{phiDef}.  The Triebel--Lizorkin space
  $\Fspq{\alpha(\cdot)}{p(\cdot)}{q(\cdot)}(\R^n)$ is defined to be
  the space of all distributions $f \in \mathcal{S}'$ with
  $\norm{f}_{\Fspq{\alpha(\cdot)}{p(\cdot)}{q(\cdot)}} < \infty$,
  where
  \begin{align*}
    \norm{f}_{\Fspq{\alpha(\cdot)}{p(\cdot)}{q(\cdot)}} &:=
    \Bignorm{ \bignorm{ 2^{\nu \alpha(x)}\, \phi_\nu \ast
        f(x)}_{\lqxnu} }_{\Lpdotx}.
  \end{align*}
  In the case of $p=q$ we use the notation
  $\Fsp{\alpha(\cdot)}{p(\cdot)}(\R^n) :=
  \Fspq{\alpha(\cdot)}{p(\cdot)}{q(\cdot)}(\R^n)$.
\end{definition}

Note that, \textit{a priori}, the function space depends on the
choice of admissible functions $(\phi,\Phi)$. One of the main
purposes of this paper is to show that, up to equivalence of
norms, every pair of admissible functions produces the same space.

In the classical case it has proved very useful to express the
Triebel--Lizorkin norm in terms of two sums, rather than a sum and
an integral, thus, giving rise to discrete Triebel--Lizorkin spaces
$\fspq{\alpha}{p}{q}$. Intuitively, this is achieved by viewing the
function as a constant on dyadic cubes. The size of the appropriate
dyadic cube varies according to the level of smoothness.

We next present a formulation of the Triebel--Lizorkin norm which is
similar in spirit.

For a sequence of real numbers $\{s_Q\}_Q$ we define
\begin{align*}
  \bignorm{\set{s_Q}_{Q}}_{\fspq{\alpha(\cdot)}{p(\cdot)}{q(\cdot)}} &:=
  \Biggnorm{ \biggnorm{  2^{\nu\alpha(x)} \sum_{Q \in \mathcal{D}_\nu}
      \abs{s_Q}\, \abs{Q}^{-\frac{1}{2}}\, \chi_Q }_{\lqxnu}
  }_{\Lpdotx}.
\end{align*}
The space $\fspq{\alpha(\cdot)}{p(\cdot)}{q(\cdot)}$ consists of
all those sequences $\{s_Q\}_Q$ for which this norm is finite.
We are ready to state our first decomposition result, which
says that $S_\phi \colon \Fspq{\alpha(\cdot)}{p(\cdot)}{q(\cdot)}
\hookrightarrow \fspq{\alpha(\cdot)}{p(\cdot)}{q(\cdot)}$ is a bounded operator.

\begin{theorem}
  \label{thm:Sphi_bnd}
  If $p$, $q$ and $\alpha$ are as in the Standing Assumptions, then
  \begin{align*}
    \norm{S_\phi f}_{\fspq{\alpha(\cdot)}{p(\cdot)}{q(\cdot)}} &\leq
    c\,\norm{f}_{\Fspq{\alpha(\cdot)}{p(\cdot)}{q(\cdot)}}.
  \end{align*}
\end{theorem}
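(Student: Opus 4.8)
The plan is to reduce the estimate for $S_\phi f$ to the $\eta$-function inequality of Theorem~\ref{thm:eta}, combined with a pointwise domination of the discrete quantities by continuous ones. Recall from the definition of $S_\phi$ that for $Q \in \mathcal{D}_\nu$ with $\nu \geq 1$ we have $(S_\phi f)_Q = |Q|^{1/2}\, \tilde\phi_\nu \ast f(2^{-\nu}k)$ (with the analogous formula using $\tilde\Phi$ when $\nu = 0$), where $Q$ has lower-left corner $x_Q = 2^{-\nu}k$. Thus the inner sum in the $\fspq{\alpha(\cdot)}{p(\cdot)}{q(\cdot)}$-norm, at a point $x$ lying in a particular cube $Q_0 \in \mathcal{D}_\nu$, reduces to a single term: $2^{\nu\alpha(x)}\, |S_\phi f\,|_{Q_0}\, |Q_0|^{-1/2}\, \chi_{Q_0}(x) = 2^{\nu\alpha(x)}\, |\tilde\phi_\nu \ast f(x_{Q_0})|$. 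So the first step is to control $|\tilde\phi_\nu \ast f(x_{Q_0})|$ by the value of a similar convolution at the running point $x$, up to an $\eta$-factor.

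The key pointwise step is the estimate
\begin{align*}
  \sup_{y \in Q_0} |\tilde\phi_\nu \ast f(y)| \leq c\, \big( \eta_{\nu,m} \ast |\phi_\nu \ast f\,|^{\,t} \big)(x)^{1/t}
\end{align*}
for any $x \in Q_0$, valid for $t > 0$ and $m$ large; this is the ``$r$-trick'' / maximal-function-type bound alluded to in the introduction (cf. Lemma~\ref{lem:est_g}), and is proved by writing $\tilde\phi_\nu \ast f$ in terms of $\phi_\nu \ast f$ via the reproducing identity associated with the admissible pair, and using that $\tilde\phi$ is Schwartz so its dyadic dilates have rapidly decaying tails, together with the fact that $\phi_\nu \ast f$ is (the restriction of) a function whose Fourier transform is supported in a ball of radius $\sim 2^\nu$, so that its local sup is controlled by a local average of any positive power. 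Applying this with $y = x_{Q_0}$, taking $0 < t < \min\{p^-, q^-\}$ so that $p(\cdot)/t$ and $q(\cdot)/t$ still satisfy the hypotheses of Theorem~\ref{thm:eta} (i.e.\ have exponents bounded below by something $>1$), one obtains
\begin{align*}
  2^{\nu\alpha(x)}\, |S_\phi f\,|_{Q_0}\, |Q_0|^{-1/2}\, \chi_{Q_0}(x) \leq c\, \big( \eta_{\nu,m'} \ast (2^{\nu\alpha(\cdot)} |\phi_\nu \ast f\,|)^{t} (x) \big)^{1/t},
\end{align*}
where moving the factor $2^{\nu\alpha(x)}$ inside the convolution costs only a bounded constant because $\alpha \in C^{\ln}_\loc$ (the log-Hölder modulus exactly compensates the polynomial growth of $\eta_{\nu,m}$ at scale $2^{-\nu}$; this is a standard lemma of the theory and is presumably one of the technical lemmas in the appendix). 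Summing the trivially disjoint cubes $Q_0 \in \mathcal{D}_\nu$ gives the same bound with $\chi_{Q_0}$ replaced by $1$.

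Finally I would raise both sides to the power $t$, take the $l^{q(\cdot)/t}_\nu$-norm in $\nu$ and the $L^{p(\cdot)/t}_x$-norm in $x$, and apply Theorem~\ref{thm:eta} with exponents $p(\cdot)/t$, $q(\cdot)/t$ and mollifier exponent $m'/t > n$ to the sequence $f_\nu := (2^{\nu\alpha(\cdot)}|\phi_\nu \ast f\,|)^t$. Undoing the $t$-th power via the scaling relations $\|\,|g|^t\,\|_{L^{p(\cdot)/t}} = \|g\|_{L^{p(\cdot)}}^t$ and likewise for $l^{q(\cdot)/t}_\nu$ converts the right-hand side into $c\,\|f\|_{\Fspq{\alpha(\cdot)}{p(\cdot)}{q(\cdot)}}^t$, and taking $t$-th roots yields the claim. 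The main obstacle is the pointwise step: one must carry out the ``$r$-trick'' carefully enough that the exponent $t$ can be chosen below both $p^-$ and $q^-$ (so that Theorem~\ref{thm:eta} is applicable to the rescaled exponents), and one must handle the interchange of the factor $2^{\nu\alpha(x)}$ with the convolution by $\eta_{\nu,m}$ using only local log-Hölder continuity of $\alpha$ and the decay of $\eta_{\nu,m}$ — the bound there is uniform in $\nu$ precisely because of the logarithmic modulus of continuity. Everything else is bookkeeping with the scaling properties of variable-exponent Lebesgue and sequence norms.
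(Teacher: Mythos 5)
Your proposal is correct and follows essentially the same route as the paper's proof: the coefficient $(S_\phi f)_Q$ is dominated via the $r$-trick (Lemma~\ref{lem:est_g}) by $\big(\eta_{\nu,2m}\ast\abs{\phi_\nu\ast f}^r(x)\big)^{1/r}$ for $x\in Q$ with $r<\min\set{p^-,q^-}$, the factor $2^{\nu\alpha(x)}$ is moved inside the convolution by the log-H\"older lemma (Lemma~\ref{lem:beta_eta}), and Theorem~\ref{thm:eta} is applied with the rescaled exponents $p(\cdot)/r$, $q(\cdot)/r$ before undoing the $r$-th power. The only cosmetic difference is your detour through the reproducing identity to relate $\tilde\phi_\nu\ast f$ and $\phi_\nu\ast f$, which is unnecessary since the $r$-trick applies directly to any band-limited function.
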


If we have a sequence $\{s_Q\}_Q$, then we can easily construct
a candidate Triebel--Lizorkin function by taking the
weighted sum with certain basis functions, $\sum s_Q m_Q$. Obviously,
certain restrictions are necessary on the functions $m_Q$ in order for this to work.
We therefore make the following definitions:

\begin{definition}
  \label{def:molecule}
  Let $\nu \in \N_0$, $Q \in \mathcal{D}_\nu$ and $k\in \Z$, $l\in \N_0$ and
$M\ge n$. A function $m_Q$ is said to be a \textit{$(k,l,M)$-smooth molecule
  for $Q$} if it satisfies the following conditions for some $m > M$:
  \begin{enumerate}[label=(M\arabic{*})]
  \item \label{itm:mol1} if $\nu > 0$, then $\displaystyle \int_{\R^n}
    x^{\gamma} m_Q (x) \, dx = 0$ for all $|\gamma| \leq k$; and
  \item \label{itm:mol2} $|D^{\gamma} m_Q(x)| \leq 2^{\abs{\gamma} \nu}
    \abs{Q}^{1/2} \,\eta_{\nu,m}(x+x_Q)$ for all multi-indices
    $\gamma\in \N_0^n$ with $|\gamma| \leq l$.
  \end{enumerate}
  The conditions \ref{itm:mol1} and \ref{itm:mol2} are called
the {\emph {moment}} and {\emph {decay conditions}}, respectively.
\end{definition}

Note that \ref{itm:mol1} is vacuously true if $k<0$.
When $M=n$, this definition is a special case of the definition
given in \cite{FraJa} for molecules. The difference is that
we consider only $k$ and $l$ integers, and $l$ non-negative.
In this case two of the four conditions given in \cite{FraJa}
are vacuous.

\begin{definition}
  Let $K,L\colon \Rn\to\R$ and $M>n$. The family $\{m_Q\}_Q$ is said to be a
  \textit{family of $(K,L,M)$-smooth
molecules} if $m_Q$ is $(\lfloor K_Q^- \rfloor ,\lfloor L_Q^-\rfloor ,M)$-smooth for every $Q\in \mathcal{D}^+$.
\end{definition}

\begin{definition}
\label{def:molForF}
We say that $\set{m_Q}_Q$ is a \textit{family of smooth molecules for
$\Fspq{\alpha(\cdot)}{p(\cdot)}{q(\cdot)}$} if it is a family of
$(N+\epsilon,\alpha+1+\epsilon,M)$-smooth molecules, where
\begin{align*}
N(x):= \frac{n}{\min \set{1,p(x),q(x)}} - n -  \alpha(x),
\end{align*}
for some constant $\epsilon>0$, and $M$ is a sufficiently large constant.
\end{definition}

The number $M$ needs to be chosen sufficiently large, for instance
\[
2\frac{n+c_\text{log}(\alpha)}{\min\{1,p^-,q^-\}}
\]
will do, where
$c_\text{log}(\alpha)$ denotes the $\log$-H\"older continuity constant
of $\alpha$. Since $M$ can be fixed depending on the parameters we
will usually omit it from our notation of molecules.

Note that the functions $\phi_Q$ are smooth molecules for arbitrary
indices. Also note that compared to the classical case
we assume the existance of $1$ more derivative (rounded down)
for smooth molecules for $\Fspq{\alpha(\cdot)}{p(\cdot)}{q(\cdot)}$.
We need the assumption for technical reasons (cf.\ Lemma~\ref{lem:Sphi_inv}).
However, we think the additional assumptions are inconsequential;
for instance the trace result (Theorem~\ref{thm:trace}),
and indeed any result based on atomic decomposition,
can still be proven in an optimal form.

\begin{theorem}
  \label{thm:Sphi_inv}
  Let the functions $p$, $q$, and $\alpha$ be as in the Standing Assumptions.
Suppose that $\{m_Q\}_Q$ is a family of smooth molecules for
$\Fspq{\alpha(\cdot)}{p(\cdot)}{q(\cdot)}$ and that $\{s_Q\}_Q \in
  \fspq{\alpha(\cdot)}{p(\cdot)}{q(\cdot)}$. Then
  \begin{align*}
  \norm{f}_{\Fspq{\alpha(\cdot)}{p(\cdot)}{q(\cdot)}} \leq c\,
  \norm{\set{s_Q}_Q}_{\fspq{\alpha(\cdot)}{p(\cdot)}{q(\cdot)}},
  \quad\text{where}\quad
  f = \sum_{\nu \geq 0}
  \sum_{Q \in \mathcal{D}_\nu} s_Q m_Q.
  \end{align*}
\end{theorem}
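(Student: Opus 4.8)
The plan is to imitate the classical Frazier--Jawerth synthesis argument, replacing the (unavailable) vector-valued maximal inequality by the $\eta$-convolution estimate of Theorem~\ref{thm:eta} and using the ``$r$-trick'' of Lemma~\ref{lem:est_g} to handle exponents below~$1$. First I would fix $r\in(0,1)$ with $r<\min\{1,p^-,q^-\}$, so that $p/r$ and $q/r$ have lower bound $>1$; raising the outer norm to the power~$r$ turns $\norm{f}_{\Fspq{\alpha(\cdot)}{p(\cdot)}{q(\cdot)}}^r$ into an $\Lprdotx(\lqrxnu)$-type expression in the $r$-th powers $\bigl(2^{\nu\alpha(x)}\abs{\phi_\nu\ast f(x)}\bigr)^r$, a form in which Theorem~\ref{thm:eta} applies with exponents $p/r$, $q/r$. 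Convergence of $\sum_{\nu,Q}s_Q m_Q$ in $\mathcal{S}'$, which is implicit in the statement, will be a byproduct of the same pointwise bounds.

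The heart of the proof is a pointwise ``almost-diagonal'' estimate for $\phi_\mu\ast m_Q$ when $Q\in\mathcal{D}_\nu$: there should exist $\delta>0$ (depending only on $\epsilon$, $n$, $p^-$, $q^-$) and an $M'$ slightly below $M$ with
\[
\abs{\phi_\mu\ast m_Q(x)} \le c\, 2^{-\abs{\mu-\nu}\delta}\, 2^{(\nu-\mu)_+\alpha(x_Q)}\, \abs{Q}^{1/2}\, \eta_{\mu\wedge\nu,\,M'}(x-x_Q).
\]
For $\mu\le\nu$ this comes from the moment condition~\ref{itm:mol1} of $m_Q$ together with a Taylor expansion of $\phi_\mu$ of order $\lfloor K_Q^-\rfloor$, which produces a gain $2^{-(\nu-\mu)(\lfloor K_Q^-\rfloor+n+1)}$; since $K=N+\epsilon$ with $N(x)=\tfrac{n}{\min\{1,p(x),q(x)\}}-n-\alpha(x)$, this gain is exactly what is needed to beat the loss $2^{(\nu-\mu)n(1/r-1)}$ incurred later when one smooths at the coarse scale $\mu$ and descends below~$1$. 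For $\mu\ge\nu$ one uses the decay condition~\ref{itm:mol2} and the fact that $\phi_\mu$ ($\mu\ge1$) has all vanishing moments, Taylor-expanding $m_Q$ of order $\lfloor L_Q^-\rfloor$; as $L=\alpha+1+\epsilon$, the resulting gain $2^{-(\mu-\nu)\lfloor L_Q^-\rfloor}$ dominates the factor $2^{(\mu-\nu)\alpha(x_Q)}$ that appears when the weight $2^{\mu\alpha}$ is pushed down to scale $\nu$. The cases $\mu=0$ or $\nu=0$ (involving $\Phi$) are easier, since there no cancellation is used. The constant $M$ must be taken large (e.g.\ the value quoted after Definition~\ref{def:molForF}) so that all $\eta$-integrals converge and so that the $\log$-H\"older oscillation of $\alpha$, $p$, $q$ across a dyadic cube costs only a bounded factor.

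With this in hand, fix $\nu$ and sum over $Q\in\mathcal{D}_\nu$. Using that $\eta_{\mu\wedge\nu,M'}$ is essentially constant on a $\mathcal{D}_\nu$-cube, a companion lemma converting $\sum_{Q\in\mathcal{D}_\nu}\abs{s_Q}\abs{Q}^{1/2}\eta_{j,M'}(\cdot-x_Q)$ into a multiple of $\eta_{j,M''}\ast g_\nu$ with $g_\nu:=\sum_{Q\in\mathcal{D}_\nu}\abs{s_Q}\abs{Q}^{-1/2}\chi_Q$, and the $\log$-H\"older continuity of $\alpha$ to replace $2^{\nu\alpha(x)}$ by $2^{\nu\alpha(y)}$ inside the convolution, one arrives at
\[
2^{\mu\alpha(x)}\Bigl|\,\sum_{Q\in\mathcal{D}_\nu}s_Q\,\phi_\mu\ast m_Q(x)\Bigr| \le c\, 2^{-\abs{\mu-\nu}\delta'}\,\bigl(\eta_{\mu\wedge\nu,\,M''}\ast \bigl(2^{\nu\alpha(\cdot)}g_\nu\bigr)\bigr)(x),\qquad \delta'>0.
\]
Then I would apply Lemma~\ref{lem:est_g} to $\phi_\mu\ast f$ (whose Fourier transform is supported in a ball of radius $\sim 2^\mu$) to get $\abs{\phi_\mu\ast f(x)}^r\le c\,\eta_{\mu,m}\ast\abs{\phi_\mu\ast f}^r(x)$, use $\abs{\phi_\mu\ast f}^r\le\sum_\nu\bigl|\sum_{Q\in\mathcal{D}_\nu}s_Q\phi_\mu\ast m_Q\bigr|^r$, feed in the displayed bound, and note that a convolution of two $\eta$-bumps is again controlled by an $\eta$-bump at the coarser scale.

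Finally, after taking the $\lqrxmu$-norm in $\mu$ (legitimate since $q/r>1$), the kernel $\bigl(2^{-\abs{\mu-\nu}\delta' r}\bigr)_{\mu,\nu}$ acts boundedly by Young's inequality, and what remains --- separated into the ranges $\mu\ge\nu$ and $\mu<\nu$, the latter using the moment gain from Step~2 to absorb the scale mismatch --- is exactly of the form estimated by Theorem~\ref{thm:eta} with exponents $p/r$, $q/r$; this yields $\norm{f}_{\Fspq{\alpha(\cdot)}{p(\cdot)}{q(\cdot)}}^r\le c\,\norm{\set{s_Q}_Q}_{\fspq{\alpha(\cdot)}{p(\cdot)}{q(\cdot)}}^r$, and taking $r$-th roots finishes the proof. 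I expect the main obstacle to be Step~2 together with the bookkeeping in the last step: unlike the constant-exponent case, where one invokes the maximal inequality once, here one must simultaneously track the cube-dependent orders $\lfloor K_Q^-\rfloor$, $\lfloor L_Q^-\rfloor$ and the weight $2^{\nu\alpha(\cdot)}$, and verify that the geometric factor $2^{-\abs{\mu-\nu}\delta}$ genuinely dominates both the coarse-scale spreading (this forces the extra moments encoded in $N$, and intertwines the use of the $r$-trick with the molecule definition) and the $2^{(\mu-\nu)_+\alpha^+}$ loss from re-indexing the smoothness weight (this forces the ``$+1+\epsilon$'' extra derivatives in Definition~\ref{def:molForF}). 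This is precisely where the non-classical hypotheses on molecules and the $\log$-H\"older assumptions on $p$, $q$, $\alpha$ are consumed.
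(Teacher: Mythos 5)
Your overall strategy is the paper's own: almost-diagonal estimates for $\phi_\mu\ast m_Q$ from the moment and decay conditions via Taylor expansion (Lemma~\ref{lem:van_moments}), conversion of the sums over $Q\in\mathcal{D}_\nu$ into $\eta$-convolutions (Lemmas~\ref{lem:conv_eta_chi}, \ref{lem:conv_eta_eta}), transfer of the weight $2^{\nu\alpha(x)}$ through the convolution by $\log$-H\"older continuity (Lemma~\ref{lem:beta_eta}), passage to $r$-th powers, summation of the geometric off-diagonal factor, and Theorem~\ref{thm:eta} with exponents $p/r$, $q/r$. (The paper uses $l^r\hookrightarrow l^1$ and Lemma~\ref{lem:etapower} rather than the $r$-trick of Lemma~\ref{lem:est_g} to commute the power with the convolution, but that difference is cosmetic.) The genuine gap is in the step you yourself call the heart of the proof: the claim that the moment gain coming from $K=N+\epsilon$ ``is exactly what is needed to beat the loss $2^{(\nu-\mu)n(1/r-1)}$'' is false for a \emph{single, globally chosen} $r<\min\{1,p^-,q^-\}$. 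That loss is governed by $n/r-n$, which exceeds the \emph{global} quantity $\sup_x n/\min\{1,p(x),q(x)\}-n$, whereas the molecule $m_Q$ only carries $\lfloor N_Q^-+\epsilon\rfloor$ vanishing moments, computed from the \emph{local} values of $p,q,\alpha$ on $Q$. If, say, $p,q\geq 1$ near a small cube $Q$ but $p^-<1$ far away, then $m_Q$ need satisfy no moment condition at all, while your loss factor $2^{(\nu-\mu)(n/r-n)}$ has a large positive exponent; the sum over the scales $\nu>\mu$ then diverges. The same (milder) mismatch occurs in the other branch, where the available derivatives $\lfloor L_Q^-\rfloor$ are tied to $\alpha_Q^-$ but the requirement involves $\alpha$ at nearby points.

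This is exactly what the paper's two-stage argument is for, and it is the part your proposal omits. Lemma~\ref{lem:Sphi_inv} proves the estimate on a domain $\Omega$ under the \emph{uniform} hypothesis that the molecules are $(J_\Omega^+-n-\alpha_\Omega^-+\epsilon,\alpha_\Omega^++1+\epsilon)$-smooth; the proof of Theorem~\ref{thm:Sphi_inv} then partitions $\R^n$ into finitely many pieces on which $1/p$, $1/q$, $\alpha$ oscillate by less than $\epsilon$ (using uniform continuity together with the assumed limits at infinity), chooses a \emph{piece-dependent} $r_i$ with $J_{\Omega_i}^+<n/r_i<J_{\Omega_i}^++\epsilon$, and verifies that for dyadic cubes of level at least $\mu_0$ the locally defined orders $\lfloor K_Q^-\rfloor$, $\lfloor L_Q^-\rfloor$ meet the regional requirements. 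The finitely many coarse molecule scales $\nu<\mu_0$ --- where $K_Q^-$ is an infimum over a large cube and may fall well short of the pointwise requirement --- must then be handled separately; there the offending factor is at most $2^{n(1-r)\mu_0}$, a constant. Without the localization and the coarse-scale case, the bookkeeping you describe does not close, so you should either add this partition argument or strengthen the molecule hypotheses to global suprema (which would prove a weaker theorem).
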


Theorems~\ref{thm:Sphi_bnd} and \ref{thm:Sphi_inv} yield an
isomorphism between $\Fspq{\alpha(\cdot)}{p(\cdot)}{q(\cdot)}$ and
a subspace of $\fspq{\alpha(\cdot)}{p(\cdot)}{q(\cdot)}$ via the $S_\phi$
transform:

\begin{corollary}
  \label{cor:F=f}
  If the functions $p$, $q$, and $\alpha$ are as in the Standing Assumptions,
  then
  \begin{align*}
    \norm{f}_{\Fspq{\alpha(\cdot)}{p(\cdot)}{q(\cdot)}} \approx
    \norm{S_\phi f}_{\fspq{\alpha(\cdot)}{p(\cdot)}{q(\cdot)}}
  \end{align*}
  for every $f \in \Fspq{\alpha(\cdot)}{p(\cdot)}{q(\cdot)}(\Rn)$.
\end{corollary}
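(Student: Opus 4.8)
The plan is simply to combine Theorems~\ref{thm:Sphi_bnd} and \ref{thm:Sphi_inv} by means of the Calder\'on-type reproducing identity \eqref{inhomFJdecomp}. One of the two inequalities hidden in the symbol $\approx$, namely
\[
  \norm{S_\phi f}_{\fspq{\alpha(\cdot)}{p(\cdot)}{q(\cdot)}} \le c\,\norm{f}_{\Fspq{\alpha(\cdot)}{p(\cdot)}{q(\cdot)}},
\]
is literally the content of Theorem~\ref{thm:Sphi_bnd}. So the only real work is the reverse bound, for a fixed $f \in \Fspq{\alpha(\cdot)}{p(\cdot)}{q(\cdot)}(\Rn)$.

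For that I would argue as follows. Since $\norm{f}_{\Fspq{\alpha(\cdot)}{p(\cdot)}{q(\cdot)}}<\infty$, Theorem~\ref{thm:Sphi_bnd} already tells us that the sequence $s := S_\phi f$, with entries $s_Q = \ip{f}{\phi_Q}$, lies in $\fspq{\alpha(\cdot)}{p(\cdot)}{q(\cdot)}$; this is precisely the hypothesis demanded by Theorem~\ref{thm:Sphi_inv}. Now \eqref{inhomFJdecomp} states exactly that $f = T_\psi S_\phi f = \sum_{\nu\ge 0}\sum_{Q\in\mathcal{D}_\nu} s_Q\,\psi_Q$ in $\cS'$ (with $\psi_Q$ understood as $\Psi_Q$ when $\ell(Q)=1$), so $f$ is represented in the form $\sum_{\nu}\sum_{Q} s_Q m_Q$ with $m_Q = \psi_Q$. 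Granting that $\{\psi_Q\}_Q$ is a family of smooth molecules for $\Fspq{\alpha(\cdot)}{p(\cdot)}{q(\cdot)}$, Theorem~\ref{thm:Sphi_inv} then yields
\[
  \norm{f}_{\Fspq{\alpha(\cdot)}{p(\cdot)}{q(\cdot)}} \le c\,\norm{s}_{\fspq{\alpha(\cdot)}{p(\cdot)}{q(\cdot)}} = c\,\norm{S_\phi f}_{\fspq{\alpha(\cdot)}{p(\cdot)}{q(\cdot)}},
\]
and combining this with Theorem~\ref{thm:Sphi_bnd} proves the corollary.

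It remains only to check that $\{\psi_Q\}_Q$ consists of smooth molecules, which is the observation already recorded after Definition~\ref{def:molForF} for the $\phi_Q$, applied word for word to the $\psi_Q$. Since $(\psi,\Psi)$ is admissible, $\supp\hat\psi$ is contained in an annulus bounded away from the origin, so $\int x^\gamma \psi(x)\,dx$ is a multiple of $D^\gamma\hat\psi(0)=0$ for every multi-index $\gamma$, and \ref{itm:mol1} holds for all $k$ when $\nu>0$ (and is vacuous for $\nu=0$); and since $\psi,\Psi\in\cS$, for every $\gamma$ and every $m$ one has $\abs{D^\gamma\psi(y)}\lesssim (1+\abs{y})^{-m}$, which after the dilation and translation defining $\psi_Q$ becomes the decay estimate \ref{itm:mol2} with $\eta_{\nu,m}$, for $m$ as large as we please; the same applies to $\Psi_Q$ at scale $\nu=0$. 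Hence $\{\psi_Q\}_Q$ is a family of $(K,L,M)$-smooth molecules for any bounded $K,L$ and any $M$, in particular for the choice required in Definition~\ref{def:molForF}. I do not expect any genuine obstacle; the only points deserving a word of care are the convergence of \eqref{inhomFJdecomp} in $\cS'$ — which is part of the Frazier--Jawerth reproducing formula recalled earlier and may be taken for granted — and the membership $S_\phi f\in\fspq{\alpha(\cdot)}{p(\cdot)}{q(\cdot)}$ needed to invoke Theorem~\ref{thm:Sphi_inv}, which is exactly what Theorem~\ref{thm:Sphi_bnd} supplies once we start from $f\in\Fspq{\alpha(\cdot)}{p(\cdot)}{q(\cdot)}(\Rn)$.
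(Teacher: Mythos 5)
Your proposal is correct and follows exactly the route the paper intends: the inequality $\lesssim$ for $S_\phi f$ is Theorem~\ref{thm:Sphi_bnd}, and the reverse bound comes from the reproducing identity \eqref{inhomFJdecomp} together with Theorem~\ref{thm:Sphi_inv} applied to the molecule family $\{\psi_Q\}_Q$ — this is precisely the argument the authors spell out in the proof of the well-definedness theorem immediately following the corollary. Your verification that the $\psi_Q$ satisfy the moment and decay conditions to arbitrary order is the same observation the paper records (for the $\phi_Q$) after Definition~\ref{def:molForF}.
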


With these tools we can prove that the space
$\Fspq{\alpha(\cdot)}{p(\cdot)}{q(\cdot)}(\R^n)$ is well-defined.

\begin{theorem}
  The space $\Fspq{\alpha(\cdot)}{p(\cdot)}{q(\cdot)}(\R^n)$ is
  well-defined, i.e.,\ the definition does not depend on the choice of
  the functions $\phi$ and $\Phi$ satisfying the conditions of
  Definition~\ref{phiDef}, up to the equivalence of norms.
\end{theorem}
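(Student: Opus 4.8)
The plan is to deduce well-definedness from the isomorphism furnished by Corollary~\ref{cor:F=f} together with the observation that the sequence space $\fspq{\alpha(\cdot)}{p(\cdot)}{q(\cdot)}$ makes no reference to the admissible functions. Concretely, fix two admissible pairs $(\phi,\Phi)$ and $(\phi^1,\Phi^1)$, and let $(\psi,\Psi)$, $(\psi^1,\Psi^1)$ be the corresponding ``dual'' pairs selected as in \eqref{inhomFJdecomp}. Denote by $\norm{\cdot}$ and $\norm{\cdot}^1$ the two candidate Triebel--Lizorkin norms. It suffices, by symmetry, to show $\norm{f}^1 \le c\,\norm{f}$ for all $f$ for which the right-hand side is finite.

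First I would apply Theorem~\ref{thm:Sphi_bnd} with the pair $(\phi,\Phi)$ to obtain $\norm{S_\phi f}_{\fspq{\alpha(\cdot)}{p(\cdot)}{q(\cdot)}} \le c\,\norm{f}$, so the sequence $s := S_\phi f = \set{\skp{f}{\phi_Q}}_Q$ lies in $\fspq{\alpha(\cdot)}{p(\cdot)}{q(\cdot)}$. Next, using the Frazier--Jawerth identity \eqref{inhomFJdecomp} for the pair $(\phi,\Phi)$, write $f = \sum_{\nu\ge 0}\sum_{Q\in\mathcal{D}_\nu} s_Q\, m_Q$ with $m_Q = \psi_Q$ (and $\Psi_Q$ at level $0$). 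Since the functions $\psi_Q$, $\Psi_Q$ are built from Schwartz functions with Fourier support in an annulus (resp.\ ball), they satisfy the moment condition \ref{itm:mol1} to arbitrary order and the decay condition \ref{itm:mol2} to arbitrary order — this is the remark in the text that ``the functions $\phi_Q$ are smooth molecules for arbitrary indices,'' which applies equally to $\psi_Q$. Hence $\set{\psi_Q}_Q$ is in particular a family of smooth molecules for $\Fspq{\alpha(\cdot)}{p(\cdot)}{q(\cdot)}$ in the sense of Definition~\ref{def:molForF}.

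Now I would compute the new norm $\norm{f}^1$ by testing against $\phi^1_\nu$. The key point is that $\phi^1_\nu \ast f = \sum_{\nu'\ge 0}\sum_{Q\in\mathcal{D}_{\nu'}} s_Q\, (\phi^1_\nu\ast\psi_Q)$, so $\norm{f}^1 = \norm{\set{s_Q}_Q}_{\Fspq{\alpha(\cdot)}{p(\cdot)}{q(\cdot)}\text{-type}}$ computed through the molecular sum with the $(\phi^1,\Phi^1)$-analysis. But this is exactly the quantity estimated in Theorem~\ref{thm:Sphi_inv}: that theorem asserts that for any family of smooth molecules $\set{m_Q}_Q$ for $\Fspq{\alpha(\cdot)}{p(\cdot)}{q(\cdot)}$ and any $\set{s_Q}_Q\in\fspq{\alpha(\cdot)}{p(\cdot)}{q(\cdot)}$, the function $f=\sum s_Q m_Q$ satisfies $\norm{f}_{\Fspq{\alpha(\cdot)}{p(\cdot)}{q(\cdot)}}\le c\,\norm{\set{s_Q}_Q}_{\fspq{\alpha(\cdot)}{p(\cdot)}{q(\cdot)}}$ — and, crucially, the norm on the left of that inequality is computed with \emph{whichever} admissible functions define the space, in particular with $(\phi^1,\Phi^1)$. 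Chaining the three estimates gives
\begin{align*}
\norm{f}^1 \le c\,\norm{\set{s_Q}_Q}_{\fspq{\alpha(\cdot)}{p(\cdot)}{q(\cdot)}} = c\,\norm{S_\phi f}_{\fspq{\alpha(\cdot)}{p(\cdot)}{q(\cdot)}} \le c\,\norm{f},
\end{align*}
and swapping the roles of the two pairs yields the reverse inequality.

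The only genuine point requiring care — and what I expect to be the main obstacle — is making sure that Theorems~\ref{thm:Sphi_bnd} and \ref{thm:Sphi_inv} are genuinely stated and proved for \emph{arbitrary} admissible pairs rather than being silently tied to one fixed choice; i.e.\ one must check that their proofs never used a relation between the analysis functions $\phi_\nu$ and the synthesis functions $\psi_Q$ beyond what is recorded in Definition~\ref{phiDef} and \eqref{inhomFJdecomp}. Granting that (which is the point of phrasing those results as ``$S_\phi$ is bounded'' and ``molecular synthesis is bounded'' with independent hypotheses), the argument above is essentially a bookkeeping exercise. A secondary technical check is that the reconstruction $f=\sum s_Q\psi_Q$ converges in $\mathcal{S}'$ and that the series may be regrouped when convolved with $\phi^1_\nu$; this is handled exactly as in the classical Frazier--Jawerth theory and in the convergence lemmas used to prove Theorem~\ref{thm:Sphi_inv}, so no new ideas are needed.
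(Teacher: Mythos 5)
Your argument is exactly the paper's proof: bound $\norm{S_\phi f}_{\fspq{\alpha(\cdot)}{p(\cdot)}{q(\cdot)}}$ by Theorem~\ref{thm:Sphi_bnd}, use the identity \eqref{inhomFJdecomp} to write $f=\sum_Q (S_\phi f)_Q\psi_Q$, observe that $\{\psi_Q\}_Q$ is a family of smooth molecules, and apply Theorem~\ref{thm:Sphi_inv} with the norm on the left computed via the second admissible pair; symmetry gives the reverse inequality. The proposal is correct and takes essentially the same route, with your cautionary remarks (independence of the synthesis estimate from the analysis pair, convergence in $\mathcal{S}'$) being exactly the points the paper relies on implicitly.
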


\begin{proof}
  Let ${\tilde\phi}_\nu$ and $\phi_\nu$ be different basis functions as in
  Definition~\ref{phiDef}.  Let $\norm{\cdot}_{\tilde\phi}$ and
  $\norm{\cdot}_\phi$ denote the corresponding norms of
  $\Fspq{\alpha(\cdot)}{p(\cdot)}{q(\cdot)}$. By symmetry, it suffices to prove
  $\norm{f}_{\tilde\phi} \le c\,  \norm{f}_\phi$ for all $f \in \mathcal{S}'$.
  Let $\norm{f}_\phi < \infty$. Then by~\eqref{inhomFJdecomp} and
  Theorem~\ref{thm:Sphi_bnd} we have $f = \sum_{Q \in \mathcal{D}^+}
  (S_\phi f)_Q \psi_Q$ and
$\norm{S_\phi f}_{\fspq{\alpha(\cdot)}{p(\cdot)}{q(\cdot)}} \leq
    c\,\norm{f}_\phi$.
Since $\{\psi_Q\}_Q$ is a family of smooth molecules,
  $\norm{f}_{\tilde\phi} \leq c\,
    \norm{S_\phi f}_{\fspq{\alpha(\cdot)}{p(\cdot)}{q(\cdot)}}$
by Theorem~\ref{thm:Sphi_inv}, which completes the proof.
\end{proof}

It is often convenient to work with compactly supported basis functions.
Thus, we say that the molecule $a_Q$ concentrated on~$Q$ is
an \textit{atom} if it satisfies $\supp a_Q \subset 3Q$. The downside
of atoms is that we need to chose a new set of them for each
function $f$ that we represent. Note that this coincides
with the definition of atoms in \cite{FraJa} in the case when
$p$, $q$ and $\alpha$ are constants.

For atomic decomposition we have the following result.

\begin{theorem}
  \label{thm:atom_Fspxq}
  Let the functions $p$, $q$, and $\alpha$ be as in the Standing Assumptions and let
  $f\in \Fspq{\alpha(\cdot)}{p(\cdot)}{q(\cdot)}$. Then there exists a
  family of smooth atoms $\set{a_Q}_Q$ and a sequence of coefficients
  $\set{t_Q}_Q$ such that
  \begin{align*}
    f=\sum_{Q \in \mathcal{D}^+} t_Q a_Q \ \text{ in }\mathcal{S}'
    \quad\text{and}\quad
    \bignorm{\set{t_Q}_Q}_{\fspq{\alpha(\cdot)}{p(\cdot)}{q(\cdot)}}
    &\approx\,\norm{f}_{\Fspq{\alpha(\cdot)}{p(\cdot)}{q(\cdot)}}.
  \end{align*}
  Moreover, the atoms can be chosen to satisfy conditions
  \ref{itm:mol1} and \ref{itm:mol2} in Definition~\ref{def:molecule}
  for arbitrarily high, given order.
\end{theorem}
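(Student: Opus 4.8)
The plan is to adapt the classical atomic decomposition of Frazier and Jawerth: first build a compactly supported decomposition out of a Calder\'on reproducing formula, using powers of the Laplacian to manufacture vanishing moments, and then carry out the coefficient estimate with the $\eta$-function machinery of this paper in place of the (now unavailable) vector-valued maximal inequality. Concretely, I would start from the convolution form of the reproducing formula \eqref{inhomFJdecomp}, writing $f=\sum_{\nu\ge0}g_\nu$ in $\mathcal S'$ with $g_0=\Psi*\tilde\Phi*f$ and $g_\nu=\psi_\nu*\tilde\phi_\nu*f$ for $\nu\ge1$; for $\nu\ge1$ the function $g_\nu$ is band-limited to the annulus $\{2^{\nu-1}\le|\xi|\le2^{\nu+1}\}$ and $|D^\gamma g_\nu|\lesssim 2^{\nu|\gamma|}\,\eta_{\nu,2M}*|\phi_\nu*f|$. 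Fixing an integer $K$ so large that $2K-1$ exceeds the highest moment order required of smooth molecules for $\Fspq{\alpha(\cdot)}{p(\cdot)}{q(\cdot)}$ (this order is finite, since $\alpha\ge0$ and $\min\{1,p^-,q^-\}>0$), and a Schwartz function $\Theta$ with $\widehat\Theta(\eta)=(-|\eta|^2)^{-K}$ on $\{\tfrac12\le|\eta|\le2\}$, one gets the reproducing identity $g_\nu=2^{-2K\nu}\Delta^K h_\nu$ with $h_\nu:=\Theta_\nu*g_\nu$ obeying the same bounds as $g_\nu$. Choosing a smooth resolution of unity $\sum_{Q\in\mathcal D_\nu}\theta_Q\equiv1$ with $\supp\theta_Q\subset\tfrac32Q$ and $|D^\gamma\theta_Q|\lesssim2^{\nu|\gamma|}$, I set, for $\nu\ge1$,
\begin{align*}
  a_Q:=t_Q^{-1}\,2^{-2K\nu}\,\Delta^K(\theta_Q h_\nu),\qquad
  t_Q:=2^{-n\nu/2}\,\sup_{y\in 3Q}\,(\eta_{\nu,2M}*|g_\nu|)(y),
\end{align*}
and analogously, without the Laplacian, on the bottom level $\nu=0$, where the reproducing kernel is supported in a ball and no moment condition is needed. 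Then $\sum_\nu\sum_{Q\in\mathcal D_\nu}t_Qa_Q=\sum_\nu g_\nu=f$ in $\mathcal S'$, each $a_Q$ is supported in $3Q$, an integration by parts gives $\int x^\gamma a_Q\,dx=0$ for $|\gamma|\le2K-1$, and the Leibniz rule together with $\eta_{\nu,m}(x-x_Q)\approx2^{n\nu}$ on $3Q$ yields $|D^\gamma a_Q(x)|\lesssim2^{\nu|\gamma|}|Q|^{1/2}\eta_{\nu,m}(x-x_Q)$ for every $\gamma$; hence $\{a_Q\}_Q$ is a family of smooth atoms, and enlarging $K$ gives \ref{itm:mol1} and \ref{itm:mol2} to any prescribed order.

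Next I would estimate $\norm{\set{t_Q}_Q}_{\fspq{\alpha(\cdot)}{p(\cdot)}{q(\cdot)}}$. Since $\sup_{y\in3Q}(\eta_{\nu,2M}*|g_\nu|)(y)\lesssim(\eta_{\nu,2M}*|g_\nu|)(x)$ for all $x\in Q$ and $\mathcal D_\nu$ tiles $\R^n$, one has $\sum_{Q\in\mathcal D_\nu}t_Q|Q|^{-1/2}\chi_Q\lesssim\eta_{\nu,2M}*|g_\nu|$. Fixing $0<r<\min\{1,p^-,q^-\}$, the $r$-trick (Lemma~\ref{lem:est_g}) applies to the band-limited $g_\nu$ and gives $\eta_{\nu,2M}*|g_\nu|(x)\lesssim[\eta_{\nu,2Mr}*|g_\nu|^r(x)]^{1/r}$, while the $\log$-H\"older continuity of $\alpha$ (and the existence of its limit at infinity) lets me commute the smoothness weight past the convolution, $2^{\nu\alpha(x)r}\eta_{\nu,2Mr}*|g_\nu|^r(x)\lesssim\eta_{\nu,Mr}*(2^{\nu\alpha(\cdot)r}|g_\nu|^r)(x)$, provided $M$ is large. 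With $G_\nu:=(2^{\nu\alpha(\cdot)}|\phi_\nu*f|)^r$ this gives
\begin{align*}
  \norm{\set{t_Q}_Q}_{\fspq{\alpha(\cdot)}{p(\cdot)}{q(\cdot)}}
  \lesssim\Bignorm{\bignorm{\eta_{\nu,Mr}*G_\nu}_{\lqrxnu}}_{\Lprdotx}^{1/r}.
\end{align*}
The exponents $p(\cdot)/r$ and $q(\cdot)/r$ satisfy the hypotheses of Theorem~\ref{thm:eta} (their lower bounds exceed $1$, their reciprocals lie in $C^{\ln}$, and $Mr>n$ for $M$ large), so that theorem bounds the right-hand side by $\bignorm{\bignorm{G_\nu}_{\lqrxnu}}_{\Lprdotx}^{1/r}=\norm{f}_{\Fspq{\alpha(\cdot)}{p(\cdot)}{q(\cdot)}}$ once the $r$-th powers are undone via the change-of-exponent rule for the variable Lebesgue norm. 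For the reverse inequality, $\{a_Q\}_Q$ is a family of smooth molecules for $\Fspq{\alpha(\cdot)}{p(\cdot)}{q(\cdot)}$ and $f=\sum_Q t_Q a_Q$, so Theorem~\ref{thm:Sphi_inv} gives $\norm{f}_{\Fspq{\alpha(\cdot)}{p(\cdot)}{q(\cdot)}}\lesssim\norm{\set{t_Q}_Q}_{\fspq{\alpha(\cdot)}{p(\cdot)}{q(\cdot)}}$, and the norm equivalence follows.

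The hard part will be the coefficient estimate of the second step. The classical route through the vector-valued Hardy--Littlewood inequality is genuinely unavailable in the variable-exponent setting, so one is forced to keep every quantity at the level of $\eta$-function convolutions and to pass through Theorem~\ref{thm:eta}; this in turn forces the use of the $r$-trick to descend below the case $p^-,q^-\le1$ and of the $\log$-H\"older lemma to pull the weight $2^{\nu\alpha(x)}$ inside the convolution. The real delicacy is choosing the constants $M$ and $K$ large enough that all of these steps hold at once --- in particular that $Mr>n$ after the $r$-trick, that the weight-commuting loses only part of the decay exponent, and that $2K-1$ dominates the order of the moment conditions for smooth molecules. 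By contrast, the construction of the atoms in the first step is essentially classical; the only mild subtlety is the inhomogeneous bottom level $\nu=0$, where the reproducing kernel is supported in a ball rather than an annulus so that no vanishing moments are produced --- but there none are required.
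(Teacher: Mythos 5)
Your proposal is correct and follows essentially the same route as the paper: the paper simply cites the construction on p.~132 of \cite{FraJa} for the atoms (which is precisely the $\Delta^K$-plus-partition-of-unity construction you write out) and takes the Frazier--Jawerth coefficients $(t_r^*)_Q$, which it identifies with $\big(\eta_{\nu,M}\ast|T_\nu|^r\big)^{1/r}$, after which the coefficient bound is obtained, exactly as in your second step, from Lemma~\ref{lem:beta_eta} and Theorem~\ref{thm:eta}, and the converse inequality from Theorem~\ref{thm:Sphi_inv}. The only real difference is cosmetic: your coefficients $t_Q=2^{-n\nu/2}\sup_{3Q}\eta_{\nu,2M}\ast|g_\nu|$ do not have the $r$-th power built in, so you need the (standard, Peetre-type) strengthening $\eta_{\nu,2M}\ast|g_\nu|\lesssim\big(\eta_{\nu,2Mr}\ast|g_\nu|^r\big)^{1/r}$ of Lemma~\ref{lem:est_g}, which the paper sidesteps by defining $t_r^*$ as a discrete $r$-th power average from the outset.
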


If the maximal operator is bounded and $1 < p^- \leq p^+ < \infty$,
then it follows easily that $C^\infty_0(\Rn)$ (the space of smooth
functions with compact support) is dense in $W^{1,p(\cdot)}(\Rn)$,
since it is then possible to use convolution. However, density can
be achieved also under more general circumstances, see
\cite{FanWZ06,Ha2,Zhi05}. Our standing assumptions are strong
enough to give us density directly:

\begin{corollary}
\label{cor:density}
Let the functions $p$, $q$, and $\alpha$ be as in the Standing Assumptions.
Then $C^\infty_0(\Rn)$ is dense in
$\Fspq{\alpha(\cdot)}{p(\cdot)}{q(\cdot)}(\Rn)$.
\end{corollary}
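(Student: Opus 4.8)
The plan is to combine the atomic decomposition of Theorem~\ref{thm:atom_Fspxq} with a truncation-plus-smoothing argument on the sequence side. Given $f \in \Fspq{\alpha(\cdot)}{p(\cdot)}{q(\cdot)}$, write $f = \sum_{Q \in \mathcal{D}^+} t_Q a_Q$ with smooth atoms and with $\norm{\set{t_Q}_Q}_{\fspq{\alpha(\cdot)}{p(\cdot)}{q(\cdot)}} \approx \norm{f}_{\Fspq{\alpha(\cdot)}{p(\cdot)}{q(\cdot)}}$; moreover, by the last sentence of that theorem we may take the atoms to have as many vanishing moments and as many bounded derivatives as we like, so each $a_Q$ is in fact a $C^\infty$ function supported in $3Q$, hence in $C^\infty_0(\Rn)$, and so is any finite linear combination of them. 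The strategy is therefore: (i) truncate the index set to a finite set $E \subset \mathcal{D}^+$ and show $\sum_{Q \in E} t_Q a_Q \to f$ in norm as $E \uparrow \mathcal{D}^+$; (ii) observe each partial sum lies in $C^\infty_0(\Rn)$; conclude density.

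First I would make step~(i) precise. Define $t_Q^{(j)} = t_Q$ if $Q \in \mathcal{D}_\nu$ with $\nu \le j$ and $|x_Q| \le j$, and $t_Q^{(j)} = 0$ otherwise, so $\set{t_Q^{(j)}}_Q$ has finite support. Then $f - \sum_Q t_Q^{(j)} a_Q = \sum_Q (t_Q - t_Q^{(j)}) a_Q$, and by Theorem~\ref{thm:Sphi_inv} (applied to the family $\set{a_Q}_Q$, which is a family of smooth molecules for the space) we get
\begin{align*}
  \Bignorm{ f - \sum_{Q} t_Q^{(j)} a_Q }_{\Fspq{\alpha(\cdot)}{p(\cdot)}{q(\cdot)}}
  \le c\, \bignorm{ \set{t_Q - t_Q^{(j)}}_Q }_{\fspq{\alpha(\cdot)}{p(\cdot)}{q(\cdot)}}.
\end{align*}
So it remains to show the right-hand side tends to $0$ as $j \to \infty$. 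Since $\set{t_Q}_Q \in \fspq{\alpha(\cdot)}{p(\cdot)}{q(\cdot)}$, this is a dominated-convergence statement in the space $\Lpdotx(\lqxnu)$: the functions $g_j(x) := 2^{\nu\alpha(x)} \sum_{Q}\abs{t_Q - t_Q^{(j)}}\abs{Q}^{-1/2}\chi_Q(x)$ are pointwise dominated by $g(x) := 2^{\nu\alpha(x)}\sum_Q \abs{t_Q}\abs{Q}^{-1/2}\chi_Q(x) \in \Lpdotx(\lqxnu)$ and decrease pointwise to $0$ (for fixed $x$, only finitely many cubes of each generation contain $x$, and the $\nu$-cutoff kills the tail in $\nu$ while the $|x_Q|$-cutoff is eventually inactive near $x$). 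One then needs the modular/norm version of dominated convergence in variable-exponent Lebesgue spaces, together with the fact that $\lqxnu$ behaves like a fixed $\ell^{q(x)}$ for each $x$; these are standard and follow from the order-continuity of the $\Lpdotx$ norm under the Standing Assumptions (or directly from $\varrho \le 1 \Leftrightarrow \norm{\cdot} \le 1$ and monotone convergence for the modular).

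For step~(ii), each $\sum_{Q \in E} t_Q^{(j)} a_Q$ is a finite sum of $C^\infty_0$ functions, hence is itself in $C^\infty_0(\Rn)$; combined with step~(i) this shows every $f$ in the space is a norm-limit of $C^\infty_0$ functions, proving the corollary. I expect the main obstacle to be the dominated-convergence argument on the sequence side: one must verify carefully that the tail sums genuinely vanish in the $\Lpdotx(\lqxnu)$ norm, which requires knowing that this norm is absolutely continuous (no "gliding hump" survives), a point that is true here because $p^+, q^+ < \infty$ but which deserves an explicit reference or short justification. A secondary point worth a sentence is that the atoms supplied by Theorem~\ref{thm:atom_Fspxq} can simultaneously be taken smooth and compactly supported with the prescribed decay/moment orders — this is exactly what the "moreover" clause grants, so no separate regularization of the atoms is needed.
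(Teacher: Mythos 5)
Your overall strategy -- truncate the atomic decomposition of Theorem~\ref{thm:atom_Fspxq} and control the tail via Theorem~\ref{thm:Sphi_inv} -- is exactly the first half of the paper's proof, and your dominated-convergence justification of $\sum_{Q\in E} t_Q a_Q \to f$ (which the paper dismisses with ``clearly'') is correct: for each $x$ and each $\nu$ the inner sum has a single non-zero term, the tail functions decrease pointwise to $0$ and are dominated in $\Lpdotx$, and since $p^+<\infty$ the modular converges, hence so does the norm.

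The gap is in step~(ii), at the sentence ``so each $a_Q$ is in fact a $C^\infty$ function.'' The ``moreover'' clause of Theorem~\ref{thm:atom_Fspxq} is a per-order statement: for any \emph{given} finite order, one can choose atoms satisfying \ref{itm:mol1} and \ref{itm:mol2} up to that order. This yields $a_Q\in C^l_0$ for any prescribed finite $l$ fixed in advance; it does not yield a single family of atoms that is literally $C^\infty$, and that inference is a non sequitur from the statement as given. (It happens to be true of the concrete Frazier--Jawerth construction, but the paper deliberately does not rely on this: it writes $a_Q\in C^k$ and $f_m\in C_0^K$ only.) Consequently your truncated sums are only guaranteed to lie in $C^K_0(\Rn)$, not $C^\infty_0(\Rn)$, and a further smoothing step is needed. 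The paper supplies it as follows: fix $K$ with $\Fspq{K}{p^+}{2}\hookrightarrow \Fspq{\alpha^+}{p^+}{1}$, approximate each compactly supported $C^K_0$ partial sum $f_m$ by $\phi_{m,k}\in C^\infty_0$ in $W^{K,p^+}=\Fspq{K}{p^+}{2}$, and transfer this to $\Fspq{\alpha(\cdot)}{p(\cdot)}{q(\cdot)}$ via the constant-exponent embedding together with Proposition~\ref{pro:embedding} (the condition at infinity there being irrelevant because the supports are bounded); a diagonal choice $k=k_m$ finishes the proof. Either add this mollification step, or replace the appeal to the ``moreover'' clause by an explicit verification that the atoms produced by the construction on p.~132 of \cite{FraJa} are genuinely $C^\infty$ with compact support.
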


Another consequence of our atomic decomposition is the analogue of
the standard trace theorem. Since its proof is much more involved, we present it
in Section~\ref{sect:traces}. Note that the assumption
$\alpha- \frac{1}{p} - (n-1) \Big(\frac1p - 1\Big)_+ > 0$
is optimal also in the constant smoothness and integrability
case, cf.~\cite[Section~5]{FJ1}

\begin{theorem}
  \label{thm:trace}
  Let the functions $p$, $q$, and $\alpha$ be as in
  the Standing Assumptions. If
  \begin{align*}
  \alpha- \frac{1}{p} - (n-1) \bigg(\frac1p - 1\bigg)_+ > 0,
\quad\text{then}\quad
    \trace \Fspq{\alpha(\cdot)}{p(\cdot)}{q(\cdot)}(\Rn) =
    \Fsp{\alpha(\cdot)- \frac{1}{p(\cdot)}}{p(\cdot)}(\R^{n-1}).
  \end{align*}
\end{theorem}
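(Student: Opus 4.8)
The plan is to reduce the trace theorem to the atomic decomposition of Theorem~\ref{thm:atom_Fspxq} and a careful bookkeeping of how atoms behave under restriction to the hyperplane $\R^{n-1} = \{x_n = 0\}$. The two inclusions must be proved separately; I expect the surjectivity direction (constructing an extension) to be routine once the trace direction is set up, so I will describe the trace direction in detail.

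\textbf{Step 1: Trace of atoms.} Given $f \in \Fspq{\alpha(\cdot)}{p(\cdot)}{q(\cdot)}(\R^n)$, decompose $f = \sum_{Q} t_Q a_Q$ as in Theorem~\ref{thm:atom_Fspxq}, where the atoms $a_Q$ have compact support in $3Q$ and as many vanishing moments and derivatives as we like. Dyadic cubes $Q \in \mathcal{D}_\nu$ in $\R^n$ split into two types: those with $x_Q$ having last coordinate zero (so $3Q$ meets the hyperplane) contribute to the trace, and those whose dilate $3Q$ does not meet $\{x_n=0\}$ contribute nothing. For a contributing cube $Q = Q' \times [0,2^{-\nu}]$ (after the obvious identification), set $\tilde{a}_{Q'}(x') := 2^{-\nu/2} a_Q(x',0)$; a direct computation shows $\tilde{a}_{Q'}$ is, up to a fixed constant, a smooth atom on $Q' \in \mathcal{D}_\nu(\R^{n-1})$ for the target space. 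The normalization factor $2^{-\nu/2}$ comes from $|Q|^{1/2} = 2^{-n\nu/2}$ versus $|Q'|^{1/2} = 2^{-(n-1)\nu/2}$, i.e. $|Q|^{1/2} = 2^{-\nu/2}|Q'|^{1/2}$, and the decay/moment conditions for $\tilde a_{Q'}$ follow by restricting those of $a_Q$ and integrating out the $x_n$ variable. The moment condition needs the extra derivative/moment freedom granted in the atomic theorem; this is exactly where the ``one more derivative'' remark in Definition~\ref{def:molForF} is used.

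\textbf{Step 2: Control of the coefficient norm.} Writing $\trace f = \sum_{Q'} \big(\sum_{Q : Q \mapsto Q'} t_Q\big)\, \tilde a_{Q'}$ (the inner sum is over the finitely many $Q$ in a given $\mathcal{D}_\nu$ projecting onto $Q'$, in fact just one per $\nu$ in the inhomogeneous setting with this corner convention), I must show
\begin{align*}
\bignorm{\set{t_{Q'}}_{Q'}}_{\fsp{\alpha(\cdot)-1/p(\cdot)}{p(\cdot)}(\R^{n-1})}
\le c\, \bignorm{\set{t_Q}_Q}_{\fspq{\alpha(\cdot)}{p(\cdot)}{q(\cdot)}(\R^n)}.
\end{align*}
This is the technical heart. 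The left side involves an $L^{p(\cdot)}(\R^{n-1})$ norm of an $\ell^{p(x')}$ sum in $\nu$; the right side an $L^{p(\cdot)}(\R^n)$ norm of an $\ell^{q(x)}$ sum. The passage from $\R^n$ to $\R^{n-1}$ costs a factor reflected in the shift $\alpha \mapsto \alpha - 1/p$: on a cube $Q$ of side $2^{-\nu}$ sitting over $Q'$, the characteristic function $\chi_Q$ integrates in $x_n$ to $2^{-\nu}\chi_{Q'}$, and $2^{-\nu} = (2^{-\nu n})^{1/n} \sim |Q|^{1/n}$, which against the variable-exponent $L^{p(\cdot)}$ structure produces precisely the $2^{-\nu/p(x')}$ weight, i.e. lowers smoothness by $1/p(\cdot)$. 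Making this rigorous with variable exponents requires the $\log$-Hölder continuity of $1/p$, $1/q$, $\alpha$ to freeze exponents on dyadic cubes (the standard ``$q(x) \approx q(x_Q)$ on $Q$'' estimates), plus an application of the $r$-trick / Lemma~\ref{lem:est_g}-type convolution estimates and Theorem~\ref{thm:eta} to sum the contributions. One also uses that replacing the $\ell^{q(x)}$ norm by $\ell^{p(x)}$ when passing to the boundary is harmless because of the strict inequality $\alpha - 1/p - (n-1)(1/p-1)_+ > 0$: the gain in smoothness beats any loss from the embedding $\ell^{p} \hookrightarrow \ell^{\infty}$ or $\ell^{\min\{1,p,q\}}$, a standard summation trick from \cite{FJ1}.

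\textbf{Step 3: The extension (surjectivity).} Conversely, given $g \in \Fsp{\alpha(\cdot)-1/p(\cdot)}{p(\cdot)}(\R^{n-1})$, decompose $g = \sum_{Q'} s_{Q'} b_{Q'}$ into atoms on $\R^{n-1}$, and build $Ef := \sum_{Q'} s_{Q'} A_{Q'}$ where $A_{Q'}(x',x_n) := b_{Q'}(x') \theta(2^\nu x_n)$ for a fixed Schwartz bump $\theta$ with $\theta(0)=1$, $\hat\theta$ compactly supported and enough vanishing moments, and $Q$ the cube over $Q'$. Then $A_{Q'}$ is (a constant multiple of) a smooth molecule for the $\R^n$ space — the tensor structure makes the decay and moment conditions immediate — and the coefficient estimate runs in the reverse direction to Step 2 with the same freezing arguments. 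Theorem~\ref{thm:Sphi_inv} then gives $Ef \in \Fspq{\alpha(\cdot)}{p(\cdot)}{q(\cdot)}(\R^n)$ with the right norm bound, and $\trace Ef = g$ by construction since $\theta(0)=1$. Combining Steps 1--3 yields the claimed identification of spaces.

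\textbf{Main obstacle.} The genuine difficulty is Step 2: carrying out the dyadic summation that converts the $n$-dimensional $\Lpdotx(\lqxnu)$ norm into the $(n-1)$-dimensional $\Lpdotx(\ell^{p(\cdot)})$ norm with the smoothness shift, \emph{without} any vector-valued maximal inequality. One must instead route everything through the $\eta$-function estimates of Theorem~\ref{thm:eta} and the $r$-trick, being careful that all exponent-freezing errors are absorbed into the $\epsilon$ in the molecule definition and into the constant $M$. The constant-exponent proof in \cite{FJ1} uses maximal functions freely; replacing that step is exactly the kind of thing the $\eta$-function machinery of this paper was built to do, but the combinatorics of which cubes $Q \in \mathcal{D}_\nu(\R^n)$ project onto a given $Q' \in \mathcal{D}_\mu(\R^{n-1})$ for $\mu \le \nu$, and bounding the resulting double sum, is where the real work lies.
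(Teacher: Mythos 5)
Your overall strategy (atomic decomposition, restriction of atoms to the hyperplane, coefficient bookkeeping) is the Frazier--Jawerth route the paper also follows, but Step~1 contains a genuine error. You claim that the moment conditions for the restricted atom $\tilde a_{Q'}(x')=2^{-\nu/2}a_Q(x',0)$ ``follow by restricting those of $a_Q$ and integrating out the $x_n$ variable.'' They do not: the vanishing of $\int_{\Rn}x^\gamma a_Q(x)\,dx$ says nothing about $\int_{\R^{n-1}}(x')^\gamma a_Q(x',0)\,dx'$, and restriction to a hyperplane destroys vanishing moments in general. This is precisely the role of the hypothesis $\alpha-\frac1p-(n-1)\big(\frac1p-1\big)_+>0$: it forces $N<0$ for the target space $\Fsp{\alpha(\cdot)-1/p(\cdot)}{p(\cdot)}(\R^{n-1})$, so that condition \ref{itm:mol1} is vacuous and the restricted atoms need satisfy \emph{no} moment conditions to qualify as molecules. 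You invoke the strict inequality only for the $\ell^{q}\to\ell^{p}$ switch, so the moment issue is left unrepaired in your argument. (A smaller slip in the same step: because atoms are supported in $3Q$, a bounded number of cubes per level --- $6\cdot 5^{n-1}$ in the paper's count --- contribute to the trace over a given $J\in\mathcal{D}^{n-1}_\mu$, not just one.)

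The second gap is Step~2. You propose to handle variable $q$ head-on by a ``standard summation trick,'' but in the variable-exponent setting that trick is not available off the shelf. The paper instead first proves Lemma~\ref{lem:trace} and Proposition~\ref{prop:trace} (using Lemma~\ref{lem:E_Q}, which shifts the mass of each coefficient onto a set $E_Q$ in the upper half-space with bounded overlap, so that the $\ell^{q}$-sum has boundedly many nonzero terms at each point and can be replaced by $\ell^{p}$), thereby reducing the whole theorem to the case $q=p$ with $p$ and $\alpha$ independent of $x_n$ for $\abs{x_n}\le 2$. After that reduction the coefficient estimate is a direct modular computation: the identity $2^{-\mu}\int_J(\cdots)\,dx'=\int_{Q_+(J)}(\cdots)\,dx$ converts the $(n-1)$-dimensional integral into an $n$-dimensional one and absorbs the $-1/p$ smoothness shift, with no $\eta$-function machinery needed at that stage. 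Without this preliminary reduction, the double summation you describe in Step~2 does not go through as stated.
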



\section{Special cases}  
\label{sect:unification}

In this section we show how the Triebel--Lizorkin scale
$\Fspq{\alpha(\cdot)}{p(\cdot)}{q(\cdot)}$ includes as special cases
previously studied spaces with variable differentiability or
integrability.

\subsection*{Lebesgue spaces}
\addcontentsline{toc}{section}{~\hspace{1cm}Lebesgue spaces}
We begin with the variable exponent Lebesgue spaces from
Section~\ref{sec:lpx}, which were originally introduced by Orlicz
in~\cite{Orl31}. We show that $\Fspq{0}{p(\cdot)}{2} \cong
L^{p(\cdot)}$ under suitable assumptions on $p$. We use an
extrapolation result for $L^{p(\cdot)}$.
Recall, that a weight $\omega$ is in the Muckenhoupt class $A_1$ if
$M\omega \leq K\, \omega$ for some such $K>0$. The smallest $K$ is the $A_1$ constant of
$\omega$.

\begin{lemma}[Theorem 1.3, \cite{CFMP}]
  \label{lem:extrapol}
  Let $p \in C^{\ln}(\R^n)$ with $1< p^- \leq p^+< \infty$ and let
  $\mathcal{G}$ denote a family of tuples $(f, g)$ of measurable
  functions on $\R^n$.  Suppose that there exists a constant $r_0 \in (0, p^-)$
  so that
  \begin{align*}
    \bigg(\int_{\R^n} \abs{f(x)}^{r_0}
    \omega(x)\,dx\bigg)^{\frac{1}{r_0}} &\leq c_0 \bigg( \int_{\R^n}
    \abs{g(x)}^{r_0} \omega(x)\,dx \bigg)^{\frac{1}{r_0}}
  \end{align*}
  for all $(f, g) \in \mathcal{G}$ and every weight $\omega \in A_1$,
  where $c_0$ is independent of $f$ and $g$ and depends on
  $\omega$ only via its $A_1$-constant. Then
  \begin{align*}
    \norm{ f}_{L^{p(\cdot)}(\R^n)} &\leq
    c_1\,\norm{g}_{L^{p(\cdot)}(\R^n)}
  \end{align*}
  for all $(f,g) \in \mathcal{G}$ with $\norm{ f}_{L^{p(\cdot)}(\R^n)} < \infty$.
\end{lemma}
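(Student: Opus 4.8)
The plan is to deploy the Rubio de Francia extrapolation scheme, taking as the two external ingredients the boundedness of the maximal operator on variable Lebesgue spaces (quoted in Section~\ref{sect:prelim}) and the duality of such spaces.

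First I would rescale: set $s(\cdot) := p(\cdot)/r_0$. Since $r_0 < p^-$ we have $1 < s^- \le s^+ < \infty$, and since $\frac1p \in C^{\ln}(\Rn)$ also $\frac1s = \frac{r_0}{p} \in C^{\ln}(\Rn)$; hence the conjugate exponent satisfies $s'(\cdot) \in C^{\ln}(\Rn)$ with $1 < (s')^- \le (s')^+ < \infty$. By \cite[Theorem~3.6]{DieHHMS_pp07} the Hardy--Littlewood maximal operator $M$ is therefore bounded on $L^{s'(\cdot)}(\Rn)$; set $A := \norm{M}_{L^{s'(\cdot)}\to L^{s'(\cdot)}}$, and note $A \ge 1$ since $Mh \ge \abs{h}$ a.e. Observe $\norm{f}_{L^{p(\cdot)}(\Rn)}^{r_0} = \bignorm{\abs{f}^{r_0}}_{L^{s(\cdot)}(\Rn)}$, and by the duality (norm conjugate) formula for variable Lebesgue spaces \cite{KR} one may pick $h \ge 0$ with $\norm{h}_{L^{s'(\cdot)}(\Rn)} \le 1$ such that $\bignorm{\abs{f}^{r_0}}_{L^{s(\cdot)}(\Rn)} \le 2 \int_{\Rn} \abs{f}^{r_0} h\,dx$. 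We may assume $\norm{g}_{L^{p(\cdot)}(\Rn)} < \infty$ (otherwise nothing is to be proved) and $\norm{f}_{L^{p(\cdot)}(\Rn)} < \infty$ by hypothesis, so every quantity below is finite.

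Next I would introduce the iteration operator $\mathcal{R}h := \sum_{k=0}^\infty (2A)^{-k} M^k h$, where $M^k$ is the $k$-fold iterate of $M$ and $M^0h = h$. Three properties hold: (i) $h \le \mathcal{R}h$ pointwise; (ii) $\norm{\mathcal{R}h}_{L^{s'(\cdot)}(\Rn)} \le \sum_{k} (2A)^{-k} A^k \norm{h}_{L^{s'(\cdot)}(\Rn)} = 2\norm{h}_{L^{s'(\cdot)}(\Rn)} \le 2$; and (iii) $\mathcal{R}h$ is an $A_1$ weight with $A_1$-constant at most $2A$, since the sublinearity of $M$ together with monotone convergence gives $M(\mathcal{R}h) \le \sum_k (2A)^{-k} M^{k+1}h \le 2A\,\mathcal{R}h$. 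Now apply (i), then the hypothesis with the weight $\omega = \mathcal{R}h$ — whose $A_1$-constant is bounded by $2A$ independently of $h$, so that $c_0$ does not degenerate — and finally Hölder's inequality in $L^{s(\cdot)}$--$L^{s'(\cdot)}$ together with (ii):
\begin{align*}
  \int_{\Rn} \abs{f}^{r_0} h
  \;\le\; \int_{\Rn} \abs{f}^{r_0} \mathcal{R}h
  \;\le\; c_0^{r_0} \int_{\Rn} \abs{g}^{r_0} \mathcal{R}h
  \;\le\; 2\, c_0^{r_0}\, \bignorm{\abs{g}^{r_0}}_{L^{s(\cdot)}(\Rn)}\, \norm{\mathcal{R}h}_{L^{s'(\cdot)}(\Rn)}
  \;\le\; 4\, c_0^{r_0}\, \norm{g}_{L^{p(\cdot)}(\Rn)}^{r_0}.
\end{align*}
Combining this with $\norm{f}_{L^{p(\cdot)}(\Rn)}^{r_0} \le 2\int_{\Rn}\abs{f}^{r_0} h$ from the first step yields $\norm{f}_{L^{p(\cdot)}(\Rn)} \le c_1 \norm{g}_{L^{p(\cdot)}(\Rn)}$ with $c_1 = 8^{1/r_0} c_0$.

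The genuine difficulty is not in this formal scheme but in its two inputs: the boundedness of $M$ on $L^{s'(\cdot)}(\Rn)$, which is exactly the deep result of \cite{DieHHMS_pp07} recalled earlier and is the reason the $\log$-Hölder hypothesis on $p$ is imposed, and the duality of variable Lebesgue spaces from \cite{KR}. The only step in the extrapolation argument itself requiring care is checking that the hypothesis is invoked solely for weights whose $A_1$-constant is bounded uniformly in $h$ — precisely why the stated dependence of $c_0$ on $\omega$ only through its $A_1$-constant is essential, and why it is built into the assumption.
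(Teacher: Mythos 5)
Your argument is correct. Note, however, that the paper does not prove this lemma at all: it is quoted verbatim as Theorem~1.3 of \cite{CFMP} and used as a black box. What you have written is a self-contained reconstruction of the Rubio de Francia extrapolation argument that underlies the cited result: rescale by $r_0$, dualize $L^{p(\cdot)/r_0}$ against $L^{(p(\cdot)/r_0)'}$, build the $A_1$ majorant $\mathcal{R}h=\sum_k (2A)^{-k}M^k h$ using the boundedness of $M$ on $L^{(p(\cdot)/r_0)'}(\Rn)$ (which follows from $p\in C^{\ln}$ and $r_0<p^-$, exactly as you check), and close with H\"older. All three properties of $\mathcal{R}h$ are verified correctly, and you rightly emphasize that the hypothesis is only ever invoked for weights with $A_1$-constant at most $2A$, uniformly in $h$. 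The only cosmetic caveat is that the factors of $2$ in the norm-conjugate formula and in H\"older's inequality for variable exponents are really constants depending on $(p/r_0)^{\pm}$, so the explicit value $c_1=8^{1/r_0}c_0$ should be read as ``some constant depending on $p$, $r_0$ and $c_0$''; this does not affect the validity of the proof.
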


\begin{theorem}
  \label{thm:Fspq=Lpx}
  Let $p \in C^{\ln}(\R^n)$ with $1< p^- \leq p^+< \infty$. Then
  $L^{p(\cdot)}(\R^n) \cong \Fspq{0}{p(\cdot)}{2}(\R^n)$. In
  particular,
  \begin{align*}
    \norm{f}_{L^{p(\cdot)}(\R^n)} \approx \bignorm{ \norm{\phi_\nu *
        f}_{l^2_\nu} }_{L^{p(\cdot)}(\R^n)}
  \end{align*}
  for all $f \in L^{p(\cdot)}(\R^n)$.
\end{theorem}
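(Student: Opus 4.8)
The plan is to reduce the variable-exponent statement to a weighted $L^r$ estimate for fixed small $r$ and then apply the extrapolation Lemma~\ref{lem:extrapol}. Set $G f(x) := \norm{\phi_\nu * f(x)}_{l^2_\nu}$, the square function associated with the admissible pair $(\phi,\Phi)$. The key classical fact is that for the inhomogeneous Littlewood--Paley decomposition, $\norm{f}_{L^r(\omega)} \approx \norm{G f}_{L^r(\omega)}$ for every $r \in (1,\infty)$ and every $A_1$ (indeed $A_\infty$, hence in particular $A_1$) weight $\omega$, with constants depending on $\omega$ only through its Muckenhoupt constant; this is standard weighted Littlewood--Paley theory (see Triebel \cite{Tr1} or the weighted Frazier--Jawerth framework \cite{FraR05}). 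Since $1 < p^-$, we may pick $r_0 \in (1, p^-)$ and apply Lemma~\ref{lem:extrapol} in both directions: taking the family $\mathcal{G} = \set{(G f, f) : f \in \mathcal{S}'}$ gives $\norm{G f}_{p(\cdot)} \lesssim \norm{f}_{p(\cdot)}$, and taking $\mathcal{G} = \set{(f, G f)}$ gives $\norm{f}_{p(\cdot)} \lesssim \norm{G f}_{p(\cdot)}$ (for the latter one should be slightly careful that the pairs consist of genuine functions; this is handled by first proving the estimate for $f \in \mathcal{S}$ or for $f$ with $G f \in L^{p(\cdot)}$ and then arguing by density/approximation, noting that $\norm{f}_{p(\cdot)} < \infty$ is exactly the hypothesis required in the lemma). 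Combining the two inequalities yields $\norm{f}_{p(\cdot)} \approx \norm{G f}_{p(\cdot)}$, which is the displayed equivalence and, by Definition~\ref{def:Fbetapq} with $\alpha \equiv 0$ and $q \equiv 2$, says precisely $L^{p(\cdot)} \cong \Fspq{0}{p(\cdot)}{2}$.

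In more detail, the steps are: (1) recall/cite the weighted vector-valued Littlewood--Paley equivalence $\norm{f}_{L^r(\omega)} \approx \bignorm{\norm{\phi_\nu * f}_{l^2_\nu}}_{L^r(\omega)}$ for $r>1$ and $\omega \in A_1$, with the constant controlled by the $A_1$-constant of $\omega$ only; (2) fix $r_0 \in (1,p^-)$ and invoke Lemma~\ref{lem:extrapol} with $\mathcal{G}$ the family of pairs $(\norm{\phi_\nu * f}_{l^2_\nu}, f)$ to transfer the "$\le$" direction to $L^{p(\cdot)}$; (3) invoke Lemma~\ref{lem:extrapol} again with the pairs reversed for the "$\ge$" direction; (4) observe that the resulting norm equivalence is, by definition, the assertion $L^{p(\cdot)} \cong \Fspq{0}{p(\cdot)}{2}$, where the identification of a tempered distribution $f \in \Fspq{0}{p(\cdot)}{2}$ with an $L^{p(\cdot)}$-function follows since $G f \in L^{p(\cdot)} \subset L^1_\loc$ forces $f$ to be (represented by) a locally integrable function via the reproducing formula~\eqref{inhomFJdecomp}.

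The main obstacle is the correct invocation of extrapolation in the direction $\norm{f}_{p(\cdot)} \lesssim \norm{G f}_{p(\cdot)}$: here the "$f$" slot of Lemma~\ref{lem:extrapol} must be a fixed measurable function, so one must either restrict to $f$ for which this is unproblematic (e.g.\ $f \in \mathcal{S}$, or $f$ a finite sum of atoms) and then pass to the limit, or verify that the hypothesis $\norm{f}_{L^{p(\cdot)}} < \infty$ in the lemma can be arranged a priori. A clean way to handle this is to prove the equivalence first for $f \in \mathcal{S}$ (where everything is finite and classical), note that $\mathcal{S}$ is dense in both spaces by Corollary~\ref{cor:density} applied with $\alpha \equiv 0$, $q \equiv 2$ (or directly, since $C_0^\infty$ is dense in $L^{p(\cdot)}$ under the log-Hölder hypothesis), and then extend by continuity. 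A secondary (but routine) point is checking that the weighted Littlewood--Paley constants genuinely depend on $\omega$ only through $[\omega]_{A_1}$, which is the precise form needed to feed into Lemma~\ref{lem:extrapol}; this is standard but should be stated.
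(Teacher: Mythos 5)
Your proposal is correct and follows essentially the same route as the paper: reduce to $f\in C^\infty_0$ (or $\mathcal{S}$) by density in both spaces, invoke the weighted Littlewood--Paley equivalence for $A_1$ weights with constants controlled by the $A_1$-constant (the paper cites Kurtz \cite{Kur80} for this), and then apply the extrapolation Lemma~\ref{lem:extrapol} twice with the two orderings of the pair $\bigl(\norm{\phi_\nu*f}_{l^2_\nu}, f\bigr)$. The care you take about which slot of the extrapolation lemma must be a genuine finite-norm function is exactly the point the paper handles by working on $C^\infty_0$ from the outset.
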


\begin{proof}
  Since $C^\infty_0(\R^n)$ is dense in $L^{p(\cdot)}(\R^n)$
  (see \cite{KR}) and also in $\Fspq{0}{p(\cdot)}{2}(\R^n)$ by
  Corollary~\ref{cor:density}, it suffices to prove the claim for all
  $f \in C^\infty_0(\R^n)$.  Fix $r \in (1, p^-)$. Then
  \begin{align*}
    \bignorm{ \norm{\phi_\nu * f}_{l^2_\nu}
    }_{L^{r_0}(\R^n;\omega)} &\approx
    \,\norm{f}_{L^{r_0}(\R^n;\omega)},
  \end{align*}
  for all $\omega \in A_1$ by \cite[Theorem~1]{Kur80}, where the constant depends
  only on the $A_1$-constant of the weight $\omega$, so the
  assumptions of Lemma~\ref{lem:extrapol} are satisfied. Applying the
  lemma with $\mathcal{G}$ equal to either
  \begin{align*}
    \bigset{ \big( \norm{ \phi_\nu*f}_{l^2_\nu}, f\big) \,:\, f \in
      C^\infty_0(\Omega)} \quad\text{or}\quad \set{ \big(f,\norm{
        \phi_\nu*f}_{l^2_\nu}\big) \,:\, f \in C^\infty_0(\Omega)}
  \end{align*}
  completes the proof.
\end{proof}

Theorem~\ref{thm:Fspq=Lpx} generalizes the equivalence of
$L^p(\R^n) \cong \Fspq{0}{p}{q}$ for constant~$p \in (1,\infty)$ to
the setting of variable exponent Lebesgue spaces. If $p \in (0,1]$,
then the spaces $L^p(\R^n)$ have to be replaced by the Hardy spaces
$h^p(\R^n)$. This suggests the following definition:

\begin{definition}
  \label{def:hardypx}
  Let $p \in C^{\ln}(\R^n)$ with $0< p^- \leq p^+< \infty$. Then we
  define the {\em variable exponent Hardy space}
  $h^{p(\cdot)}(\R^n)$ by $h^{p(\cdot)}(\Rn) := \Fspq{0}{p(\cdot)}{2}(\Rn)$.
\end{definition}

The investigation of this space is left for future research.

\subsection*{Sobolev and Bessel spaces}
\addcontentsline{toc}{section}{~\hspace{1cm}Sobolev and Bessel spaces}
We move on to Bessel potential spaces with variable
integrability, which have been independently introduced by Almeida \&
Samko~\cite{AlmS06}
and Gurka, Harjulehto \& Nekvinda~\cite{GHN}.
This scale includes also the variable
exponent Sobolev spaces $W^{k,p(\cdot)}$.

In the following let $\mathcal{B}^\sigma$ denote the Bessel potential
operator $\mathcal{B}^{\sigma} = \mathcal{F}^{-1}
(1+\abs{\xi}^2)^{-\frac{\sigma}{2}} \mathcal{F}$ for $\sigma \in
\R$.  Then the {\em variable exponent Bessel potential space} is
defined by
\begin{align*}
  \mathcal{L}^{\alpha,p(\cdot)}(\R^n) &:= \mathcal{B}^\alpha \big(
  L^{p(\cdot)}(\R^n)\big) = \set{\mathcal{B}^\alpha g\,:\, g
    \in L^{p(\cdot)}(\R^n)}
\end{align*}
equipped with the norm $\norm{g}_{\mathcal{L}^{\alpha,p(\cdot)}}
:= \norm{\mathcal{B}^{-\alpha} g}_{p(\cdot)}$.
It was shown independently in~\cite[Corollary~6.2]{AlmS06}
and~\cite[Theorem~3.1]{GHN} that $\mathcal{L}^{k,p(\cdot)}(\R^n)
\cong W^{k,p(\cdot)}(\R^n)$ for $k \in \N_0$ when $p\in C^{\ln}(\Rn)$
with $1<p^- \le p^+ <\infty$.

We will show that $\mathcal{L}^{\alpha,p(\cdot)}(\R^n) \cong
\Fspq{\alpha}{p(\cdot)}{2}(\R^n)$ under suitable assumptions on~$p$
for $\alpha \geq 0$ and that $\mathcal{L}^{k,p(\cdot)}(\R^n) \cong
W^{k,p(\cdot)}(\R^n) \cong \Fspq{k}{p(\cdot)}{2}(\R^n)$ for $k
\in \N_0$. It is clear by the definition of
$\mathcal{L}^{\alpha,p(\cdot)}(\R^n)$ that $\mathcal{B}^\sigma$
with $\sigma \geq 0$ is an isomorphism between
$\mathcal{L}^{\alpha,p(\cdot)}(\R^n)$ and
$\mathcal{L}^{\alpha+\sigma,p(\cdot)}(\R^n)$, i.e.,\ it has a lifting property.
Therefore, in view of Theorem~\ref{thm:Fspq=Lpx} and
$\mathcal{L}^{0,p(\cdot)}(\R^n) = L^{p(\cdot)}(\R^n) \cong
\Fspq{0}{p(\cdot)}{2}(\Rn)$, we will complete the circle by proving a
lifting property for the scale
$\Fspq{\alpha(\cdot)}{p(\cdot)}{q(\cdot)}(\Rn)$.

\begin{lemma}[Lifting property]
  \label{lem:BesselIso}
  Let $p$, $q$, and $\alpha$ be as in the Standing Assumptions and
  $\sigma \geq 0$. Then the Bessel potential operator
  $\mathcal{B}^{\sigma}$ is an isomorphism between
  $\Fspq{\alpha(\cdot)}{p(\cdot)}{q(\cdot)}$ and
  $\Fspq{\alpha(\cdot)+\sigma}{p(\cdot)}{q(\cdot)}$.
\end{lemma}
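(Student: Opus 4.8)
The plan is to reduce the lifting property to the multiplier/molecule machinery already developed, exactly as in the classical Frazier--Jawerth argument. Write $\sigma_\nu(\xi) = 2^{-\nu\sigma}(1+|\xi|^2)^{-\sigma/2}$ localized to the frequency annulus $|\xi|\approx 2^\nu$ (or the ball $|\xi|\lesssim 1$ for $\nu=0$), so that $\mathcal{B}^{-\sigma}$ applied to a piece $\phi_\nu\ast f$ corresponds to convolution with a kernel obtained by inverse Fourier transform of $\hat\phi(2^{-\nu}\cdot)(1+|\cdot|^2)^{\sigma/2}$, suitably rescaled. The key point is that $\mathcal{B}^{\pm\sigma}$ commutes with the Littlewood--Paley pieces only up to an error, so one picks an auxiliary admissible pair and writes $\phi_\nu \ast \mathcal{B}^\sigma f$ in terms of $\tilde\phi_{\nu'}\ast f$ for $\nu'$ close to $\nu$, with rapidly decaying kernels.

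First I would show $\mathcal{B}^\sigma\colon \Fspq{\alpha(\cdot)}{p(\cdot)}{q(\cdot)}\to \Fspq{\alpha(\cdot)+\sigma}{p(\cdot)}{q(\cdot)}$ is bounded. Using the admissible functions, $2^{\nu(\alpha(x)+\sigma)}\phi_\nu\ast\mathcal{B}^\sigma f(x)$ can be rewritten, via a standard unwrapping of the Fourier multiplier on the support of $\hat\phi(2^{-\nu}\cdot)$, as $2^{\nu\alpha(x)}\, 2^{\nu\sigma}(\text{kernel}_\nu)\ast(\phi_{\nu-1}+\phi_\nu+\phi_{\nu+1})\ast f(x)$ where $2^{\nu\sigma}\cdot\text{kernel}_\nu$ has the form $\eta_{\nu,m}$ up to a constant (because $(1+|2^\nu\cdot|^2)^{\sigma/2}$ restricted to $|\cdot|\approx 1$ is a fixed smooth compactly supported bump times $2^{\nu\sigma}$, whose inverse transform is Schwartz, hence dominated by $\eta_{\nu,m}$ for any $m$). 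Then Theorem~\ref{thm:eta} — applied after the usual reduction replacing $p,q$ by $p/r,q/r$ and $f_\nu$ by $|f_\nu|^r$ for a small $r<\min\{1,p^-,q^-\}$, as is done throughout this circle of ideas (the ``$r$-trick'') — gives
\[
\Bignorm{\bignorm{2^{\nu(\alpha(x)+\sigma)}\phi_\nu\ast\mathcal{B}^\sigma f}_{\lqxnu}}_{\Lpdotx}
\le c\,\Bignorm{\bignorm{2^{\nu\alpha(x)}\phi_\nu\ast f}_{\lqxnu}}_{\Lpdotx},
\]
which is the desired estimate. Since $\mathcal{B}^{-\sigma}$ is the same type of operator with $\sigma$ replaced by $-\sigma$, and for $\nu\ge 1$ the symbol $2^{\nu\sigma}(1+|2^\nu\cdot|^2)^{-\sigma/2}$ on $|\cdot|\approx 1$ is again a fixed Schwartz bump, the same argument bounds $\mathcal{B}^{-\sigma}\colon \Fspq{\alpha(\cdot)+\sigma}{p(\cdot)}{q(\cdot)}\to\Fspq{\alpha(\cdot)}{p(\cdot)}{q(\cdot)}$; one only needs $\alpha+\sigma$ to still satisfy the Standing Assumptions, which holds since $\sigma\ge 0$ is constant. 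Because $\mathcal{B}^\sigma\mathcal{B}^{-\sigma}=\mathrm{id}$ on $\mathcal{S}'$, these two bounded maps are mutually inverse, giving the isomorphism.

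The main obstacle is the handling of the low-frequency term $\nu=0$ and, more substantively, the fact that $\mathcal{B}^{\pm\sigma}$ does not exactly preserve the frequency support of each $\phi_\nu$, so that one genuinely needs the neighboring pieces $\phi_{\nu-1},\phi_{\nu+1}$ and must control the cross terms. This is where the rapid decay of the rescaled kernels ($\eta_{\nu,m}$ with $m$ as large as one likes, since the bump is Schwartz) together with Theorem~\ref{thm:eta} does the work; the shift by one dyadic level is harmless because $2^{(\nu\pm 1)\alpha(x)}\approx 2^{\nu\alpha(x)}$ with constants depending only on $\|\alpha\|_\infty$. A secondary technical point is that $\mathcal{B}^{-\sigma}g$ a priori lives in $\mathcal{S}'$ but one must check the series defining it converges and the distributional identities hold; this is routine given $\alpha\ge 0$ and the decay of the pieces, and is of the same nature as the convergence arguments used for the $\phi$-transform in Section~\ref{sect:mainresults}.
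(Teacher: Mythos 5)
Your argument is correct in outline, but it takes a genuinely different route from the paper. The paper proves the lifting property through the decomposition machinery: it writes $f=\sum_Q s_Q\phi_Q$ via the $\phi$-transform, applies $\mathcal{B}^\sigma$ termwise, and verifies by Fourier-side estimates that the renormalized functions $2^{\nu\sigma}\mathcal{B}^{\sigma}\phi_Q$ form (up to a constant) a family of smooth molecules for $\Fspq{\alpha(\cdot)+\sigma}{p(\cdot)}{q(\cdot)}$, so that Theorems~\ref{thm:Sphi_bnd} and~\ref{thm:Sphi_inv} give the two-sided bound. You instead estimate the multiplier directly on each Littlewood--Paley piece: on $\supp\hat\phi(2^{-\nu}\cdot)$ the symbol $(1+|\xi|^2)^{\mp\sigma/2}$ equals $2^{\mp\nu\sigma}$ times a uniformly bounded smooth bump, so $2^{\pm\nu\sigma}\phi_\nu\ast\mathcal{B}^{\pm\sigma}f$ is controlled by $\eta_{\nu,m}\ast|\phi_{\nu'}\ast f|$ for $\nu'$ near $\nu$, and Theorem~\ref{thm:eta} (after the $r$-trick reduction and Lemma~\ref{lem:beta_eta} to move $2^{\nu\alpha(x)}$ through the convolution) finishes. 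Your route is more self-contained and treats $\mathcal{B}^{\sigma}$ and $\mathcal{B}^{-\sigma}$ symmetrically, where the paper only sketches the inverse direction; the paper's route buys consistency with its general architecture, in which every operator bound is funnelled through the molecular synthesis theorem. Two points you should make explicit if you write this up: (i) the passage to exponents $p/r$, $q/r$ requires the pointwise inequality $\bigl(\eta_{\nu,m}\ast|\phi_\nu\ast f|\bigr)^{r}\le c\,\eta_{\nu,mr}\ast|\phi_\nu\ast f|^{r}$, which is not Jensen (the inequality goes the wrong way for $r\le 1$) but rests on the band-limitedness of $\phi_\nu\ast f$ via Lemma~\ref{lem:est_g}, exactly as in the proof of Theorem~\ref{thm:Sphi_bnd}; and (ii) the low-frequency term $\nu=0$ is in fact the easy case, since $(1+|\xi|^2)^{\mp\sigma/2}$ is smooth at the origin and $\hat\Phi$ has compact support, so no separate obstacle arises there.
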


\begin{proof}
  Let $f \in \Fspq{\alpha(\cdot)}{p(\cdot)}{q(\cdot)}$.
We know that $\{\phi_Q\}$ is a family of smooth molecules,
thus, by Theorem~\ref{thm:Sphi_bnd}
  \begin{align*}
    \bignorm{ \set{s_Q}_Q }_{\fspq{\alpha(\cdot)}{p(\cdot)}{q(\cdot)}}
    \approx \bignorm{f}_{\Fspq{\alpha(\cdot)}{p(\cdot)}{q(\cdot)}},
  \end{align*}
where $f = \sum_{Q \in \mathcal{D}^+} s_Q\, \phi_Q$.
Therefore,
  \begin{align*}
    \mathcal{B}^{\sigma} f = \sum_{Q \in \mathcal{D}^+} s_Q\,
    \mathcal{B}^{\sigma} \phi_Q = \sum_{Q \in \mathcal{D}^+}
\underbrace{2^{-\nu \sigma}\, s_Q}_{\displaystyle =: s_Q'}
\, \underbrace{2^{\nu \sigma} \mathcal{B}^{\sigma} \phi_Q}_{\displaystyle =:\phi_Q'}.
  \end{align*}

Let us check that $\{K \phi_Q'\}_Q$ is a family of smooth molecules
of an arbitrary order for a suitable constant $K$.
Let $Q\in \mathcal{D}^+$. Without loss
of generality we may assume that $x_Q=0$. Then
\begin{align*}
\widehat{\phi_Q'}(\xi) =
\frac{2^{\nu \sigma}\widehat{\phi_Q}(\xi)}{(1+|\xi|)^\sigma}
=
\frac{2^{\nu \sigma} |Q|^{1/2} \hat \phi(2^{-\nu} \xi)}{(1+|\xi|)^\sigma}.
\end{align*}
Since $\hat \phi$ has support in the annulus $B^n(0,2)\setminus B^n(0,1/2)$,
it is clear that $\widehat{\phi_Q'}\equiv 0$ in a neighborhood of
the origin when $l(Q)<1$, so the family satisfies the moment
condition in Definition~\ref{def:molecule} for an arbitrarily high order.

Next we consider the decay condition for molecules.
Let $\mu\in \N_0^n$ be a multi-index with $|\mu|=m$.
We estimate
\begin{align*}
\big| D^{\mu}_\xi \widehat{\phi_Q'}(\xi)\big|
& \le
2^{\nu \sigma} |Q|^{1/2} \bigg|D^{\mu}_\xi \bigg[\frac{\hat \phi(2^{-\nu} \xi)}{(1+|\xi|)^\sigma}\bigg]\bigg| \\
& = |Q|^{1/2} 2^{-\nu  m}
\bigg|D^{\mu}_\zeta \bigg[\frac{\hat \phi(\zeta)}{(2^{-\nu}+|\zeta|)^\sigma}\bigg]\bigg| \\
& \le c\,
|Q|^{1/2} 2^{-\nu  m}
\big|D^{\mu}_\zeta \big[\hat \phi(\zeta) |\zeta|^{-\sigma}\big]\big|,
\end{align*}
where $\zeta = 2^{-\nu}\xi$ and we used that the support of $\hat\phi$
lies in the annulus $B^n(0,2)\setminus B^n(0,1/2)$ for the last estimate.
Define
\begin{align*}
K_m = \sup_{|\mu| = m, \zeta\in \Rn} 2^{-\nu  m}
\big|D^{\mu}_\zeta \big[\hat \phi(\zeta) |\zeta|^{-\sigma}\big] \big|.
\end{align*}
Since $\sigma\ge 0$ and $\hat \phi$ vanishes in a neighborhood
of the origin, we conclude that $K_m<\infty$ for every $m$.
From the estimate
\begin{equation*}
|x^\mu \psi(x)| = c \,\bigg| \int_\Rn (-1)^m D^\mu_\xi
\hat\psi(\xi)\, e^{i\, x\cdot \xi}\, d\xi\bigg| \le c\, \big|
\mbox{supp}\, \hat \psi\big|\, \sup_\xi |D^\mu_\xi \hat \psi(\xi)|,
\end{equation*}
we conclude that
\begin{equation*}
|x|^m\, |\phi_Q'(x)| \le c\,  2^{\nu n}  |Q|^{1/2} 2^{-\nu m} K_m
\quad\text{and}\quad
|\phi_Q'(x)| \le c\,  2^{\nu n}  |Q|^{1/2} K_0.
\end{equation*}
Multiplying the former of the two inequalities
by $2^{\nu  m}$ and adding it to the latter gives
\begin{align*}
\big(1  + 2^{\nu  m}|x|^m\big) \, |\phi_Q'(x)|
\le c\,  2^{\nu n} |Q|^{1/2} (K_0 + K_m).
\end{align*}
Finally, this implies that
\begin{align*}
|\phi_Q'(x)| \le c\,
\frac{2^{\nu n}}{(1 + 2^{\nu}|x|)^m} |Q|^{1/2} (K_0 + K_m)
= |Q|^{1/2} (K_0 + K_m) \eta_{\nu,m}(x),
\end{align*}
from which we conclude that the family $\{ K \phi_Q'\}_Q$
satisfy the decay condition when $K\le(|Q|^{1/2} (K_0 + K_m) )^{-1}$.
A similar argument yields the decay condition for $D_x^\mu \phi_Q'$.

Since $\{ K \phi_Q'\}_Q$ is a family of smooth molecules for
$\Fspq{\alpha(\cdot)+\sigma}{p(\cdot)}{q(\cdot)}$,
we can apply Theorem~\ref{thm:Sphi_inv} to conclude that
  \begin{align*}
    \bignorm{\mathcal{B}^{\sigma}
      f}_{\Fspq{\alpha(\cdot)+\sigma}{p(\cdot)}{q(\cdot)}} &\le c\,
    \bignorm{ \set{s_Q'/K}_Q
    }_{\fspq{\alpha(\cdot)+\sigma}{p(\cdot)}{q(\cdot)}} \le c\,
    \bignorm{ \set{ s_Q}_Q
    }_{\fspq{\alpha(\cdot)}{p(\cdot)}{q(\cdot)}} \approx
    \bignorm{f}_{\Fspq{\alpha(\cdot)}{p(\cdot)}{q(\cdot)}}.
  \end{align*}
  The reverse inequality is handled similarly. 
\end{proof}

\begin{theorem}
  \label{thm:F=Bessel}
  Let $p \in C^{\ln}(\R^n)$ with $1 < p^- \leq p^+ < \infty$ and
  $\alpha \in [0,\infty)$. Then $\Fspq{\alpha}{p(\cdot)}{2}(\R^n)
  \cong \mathcal{L}^{\alpha,p(\cdot)}(\R^n)$. If $k \in \N_0$,
  then $\Fspq{k}{p(\cdot)}{2}(\R^n) \cong
   W^{k,p(\cdot)}(\R^n)$.
\end{theorem}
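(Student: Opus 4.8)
The plan is to chain together three ingredients: the identification $\Fspq{0}{p(\cdot)}{2}(\Rn) \cong L^{p(\cdot)}(\Rn)$ from Theorem~\ref{thm:Fspq=Lpx}, the lifting property of the variable Triebel--Lizorkin scale from Lemma~\ref{lem:BesselIso}, and the definition of the variable Bessel potential space together with its built-in (isometric) lifting property. Throughout, all the spaces in play are subspaces of $\mathcal{S}'(\Rn)$ and all the claimed isomorphisms are realized by the same operators $\mathcal{B}^{\pm\alpha}$, so composing and transporting them is legitimate.

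First I would record that a constant function $\alpha\equiv\alpha_0\ge 0$ lies in $C^{\ln}_\loc(\Rn)\cap L^\infty(\Rn)$ and has a limit at infinity, and that $q\equiv 2$ satisfies $\tfrac1q\in C^{\ln}(\Rn)$ trivially; hence $(p,2,\alpha)$ meets the Standing Assumptions and Lemma~\ref{lem:BesselIso} applies with base smoothness $0$ and shift $\sigma=\alpha\ge 0$. This gives that $\mathcal{B}^{\alpha}\colon \Fspq{0}{p(\cdot)}{2}(\Rn)\to \Fspq{\alpha}{p(\cdot)}{2}(\Rn)$ is an isomorphism, with $\norm{\mathcal{B}^{\alpha}h}_{\Fspq{\alpha}{p(\cdot)}{2}}\approx \norm{h}_{\Fspq{0}{p(\cdot)}{2}}$.

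Next, for $g\in\mathcal{S}'$ one has $g\in\mathcal{L}^{\alpha,p(\cdot)}(\Rn)$ if and only if $\mathcal{B}^{-\alpha}g\in L^{p(\cdot)}(\Rn)$, and by Theorem~\ref{thm:Fspq=Lpx} this is equivalent to $\mathcal{B}^{-\alpha}g\in \Fspq{0}{p(\cdot)}{2}(\Rn)$ with $\norm{\mathcal{B}^{-\alpha}g}_{p(\cdot)}\approx\norm{\mathcal{B}^{-\alpha}g}_{\Fspq{0}{p(\cdot)}{2}}$. Applying the lifting isomorphism of the previous paragraph, the latter is equivalent to $g=\mathcal{B}^{\alpha}(\mathcal{B}^{-\alpha}g)\in \Fspq{\alpha}{p(\cdot)}{2}(\Rn)$, and
\[
\norm{g}_{\mathcal{L}^{\alpha,p(\cdot)}}=\norm{\mathcal{B}^{-\alpha}g}_{p(\cdot)}\approx\norm{\mathcal{B}^{-\alpha}g}_{\Fspq{0}{p(\cdot)}{2}}\approx\norm{g}_{\Fspq{\alpha}{p(\cdot)}{2}}.
\]
This proves $\Fspq{\alpha}{p(\cdot)}{2}(\Rn)\cong\mathcal{L}^{\alpha,p(\cdot)}(\Rn)$.

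For the Sobolev statement I would simply specialize this to $\alpha=k\in\N_0$ and combine it with the known identification $\mathcal{L}^{k,p(\cdot)}(\Rn)\cong W^{k,p(\cdot)}(\Rn)$ for $p\in C^{\ln}$ with $1<p^-\le p^+<\infty$, due to Almeida \& Samko~\cite{AlmS06} and Gurka, Harjulehto \& Nekvinda~\cite{GHN} and quoted just before the theorem. There is no genuine obstacle here: the only point requiring care is the bookkeeping that ensures all the isomorphisms act on a common underlying space of tempered distributions, and the verification that a constant $\alpha$ (together with $q\equiv 2$) satisfies the Standing Assumptions so that Lemma~\ref{lem:BesselIso} is applicable.
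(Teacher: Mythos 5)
Your proposal is correct and follows essentially the same route as the paper: both combine Theorem~\ref{thm:Fspq=Lpx} ($\Fspq{0}{p(\cdot)}{2}\cong L^{p(\cdot)}$), the lifting property of Lemma~\ref{lem:BesselIso} applied with $\sigma=\alpha$, and the known identification $\mathcal{L}^{k,p(\cdot)}\cong W^{k,p(\cdot)}$ for the Sobolev case. Your extra remarks on verifying the Standing Assumptions for constant $\alpha$ and $q\equiv 2$ are sound but routine.
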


\begin{proof}
Suppose that $f\in \Fspq{\alpha}{p(\cdot)}{2}(\R^n)$.
By Lemma~\ref{lem:BesselIso}, $\mathcal{B}^{-\alpha} f \in
\Fspq{0}{p(\cdot)}{2}(\R^n)$, so
we conclude by Theorem~\ref{thm:Fspq=Lpx}
that $\mathcal{B}^{-\alpha} f \in L^{p(\cdot)}(\Rn)
= \mathcal{L}^{0,p(\cdot)}(\R^n)$.
Then it follows by the definition of the Bessel space that
$f = \mathcal{B}^{\alpha} [\mathcal{B}^{-\alpha} f] \in
\mathcal{L}^{\alpha,p(\cdot)}(\R^n)$.
The reverse inclusion follows by reversing these steps.

The claim regarding the Sobolev spaces follows
  from this and the equivalence $\mathcal{L}^{k,p(\cdot)}(\R^n)
\cong W^{k,p(\cdot)}(\R^n)$ for $k \in \N_0$
(see \cite[Corollary~6.2]{AlmS06} or~\cite[Theorem~3.1]{GHN}).
\end{proof}

\subsection*{Spaces of variable smoothness}
\addcontentsline{toc}{section}{~\hspace{1cm}Spaces of variable smoothness}

Finally, we come to spaces of variable smoothness as introduced
by Besov \cite{Be97}, following Leopold \cite{Leo87}.
Let $p,q \in (1,\infty)$ and let $\alpha \in
C^{\ln}_\loc(\Rn) \cap L^\infty(\Rn)$ with $\alpha\geq 0$. Then Besov defines
the following spaces of variable smoothness
\begin{align*}
  \Fspq{\alpha(\cdot),\text{Besov}}{p}{q}(\R^n) &:= \Bigset{ f \in
    L^p_{\loc}(\R^n)\,:\,
    \bignorm{f}_{\Fspq{\alpha(\cdot),\text{Besov}}{p}{q}} < \infty},
  \\
  \norm{f}_{\Fspq{\alpha(\cdot),\text{Besov}}{p}{q}} &:= \Biggnorm{
    \biggnorm{ 2^{\nu\alpha(x)} \int_{\abs{h} \leq 1}  \bigabs{
        \Delta^M(2^{-k} h, f)(x)} \,dh }_{l^q_\nu} }_{L^p_x}
  + \norm{f}_{L^p_x},
\end{align*}
where
\begin{align*}
  \Delta^M(y,f)(x) := \sum_{k=0}^M (-1)^{M-k} \binom{M}{k}\, f(x + ky).
\end{align*}
In~\cite[Theorem~3.2]{Be03} Besov proved that
$\Fspq{\alpha(\cdot),\text{Besov}}{p}{q}(\R^n)$ can be renormed by
\begin{align*}
  \Bignorm{
    \bignorm{ 2^{\nu \alpha(x)}\, \phi_\nu \ast f(x)}_{l^q_\nu}
  }_{L^p_x} &\approx
 \norm{f}_{\Fspq{\alpha(\cdot),\text{Besov}}{p}{q}},
\end{align*}
which agrees with our definition of the norm of
$\Fspq{\alpha(\cdot)}{p}{q}$, since $p$ and $q$ are constants.
This immediately implies the following result:

\begin{theorem}
  \label{eq:besov=Fspq}
  Let $p,q \in (1,\infty)$, $\alpha \in C^{\ln}_\loc \cap L^\infty$
  and $\alpha\geq 0$. Then
  $\norm{f}_{\Fspq{\alpha(\cdot),\text{Besov}}{p}{q}}(\R^n) \approx
  \norm{f}_{\Fspq{\alpha(\cdot)}{p}{q}}(\R^n)$.
\end{theorem}

In his works, Besov also studied Besov spaces of variable
differentiability. For $p,q \in (1,\infty)$
and $\alpha \in C^{\ln}_\loc \cap L^\infty$ with $\alpha\geq 0$, he
defines
\begin{align*}
  \Bspq{\alpha(\cdot),\text{Besov}}{p}{q}(\R^n) &:= \Bigset{ f \in
    L^p_{\loc}(\R^n) \,:\,
    \norm{f}_{\Bspq{\alpha(\cdot),\text{Besov}}{p}{q}(\R^n)} <
    \infty},
  \\
  \norm{f}_{\Bspq{\alpha(\cdot),\text{Besov}}{p}{q}(\R^n)} &:=
  \Biggnorm{ \biggnorm{ \sup_{\abs{h} \leq 1}
      \bigabs{\Delta^M(2^{-\nu\alpha(x)}h, f)(x)} }_{L^p_x}
  }_{l^q_\nu} + \norm{f}_{L^p_x}.
\end{align*}

\begin{remark}
In fact, Besov gives a slightly more general definition
than this for both the Triebel--Lizorkin and the Besov spaces.
He replaces $2^{\nu\alpha(x)}$ by a sequence of functions $\beta_\nu(x)$.
The functions $\beta_\nu(x)$ are then assumed to satisfy some regularity
assumptions with respect to $\nu$ and $x$, which are very closely related to the
local $\log$-H{\"o}lder continuity of $\alpha$. Indeed, if $\beta_\nu(x) =
2^{\nu \alpha(x)}$, then his conditions on $\beta_\nu$ are precisely
that $\alpha \in C^{\ln}_\loc \cap L^\infty$.
\end{remark}

In the classical case the scale of Triebel-Lizorkin spaces and the
scale of Besov spaces agree if $p=q$. Besov showed in~\cite{Be05}
that this is also the case for his new scales of Triebel-Lizorkin
and Besov spaces, i.e.,
$\Fspq{\alpha(\cdot),\text{Besov}}{p}{p}(\R^n) =
\Bspq{\alpha(\cdot),\text{Besov}}{p}{p}(\R^n)$ for $p \in
(1,\infty)$, $\alpha \in C^{\ln}_\loc \cap L^\infty$, and
$\alpha\geq 0$. This enables us to point out a connection to another
family of spaces. By means of the symbols of pseudodifferential
operators, Leopold~\cite{Leo87} introduced Besov spaces with
variable differentiability
$\Bspq{\alpha(\cdot),\text{Leopold}}{p}{p}(\R^n)$. He further
showed that if $0<\alpha^-\leq \alpha^+ < \infty$ and $\alpha \in
C^\infty(\R^n)$, then the spaces
$\Bspq{\alpha(\cdot),\text{Leopold}}{p}{p}(\R^n)$ can be
characterized by means of finite differences. This characterization
agrees with the one that later Besov~\cite{Be99} used in the
definition of the spaces
$\Bspq{\alpha(\cdot),\text{Besov}}{p}{p}(\R^n)$. In particular,
we have $\Bspq{\alpha(\cdot),\text{Leopold}}{p}{p}(\R^n) =
\Bspq{\alpha(\cdot),\text{Besov}}{p}{p}(\R^n) =
F^{\alpha(\cdot)}_{p,p}(\R^n)$ for such~$\alpha$.

\subsection*{Other spaces}

It should be mentioned that there have recently also been some
extensions of variable integrability spaces in other directions, not
covered by the Triebel--Lizorkin scale that we introduce here.  For
instance, Harjulehto \& H{\"a}st{\"o} \cite{HH5} modified
the Lebesgue space scale on the upper end to account for the fact
that $W^{1,n}$ does not map to $L^\infty$ under the Sobolev
embedding. Similarly, in the image restoration model by Chen, Levine
and Rao mentioned above, one has the problem that the exponent $p$
takes values in the closed interval $[1,2]$, including the lower
bound, so that one is not working with reflexive spaces.  It is
well-known that the space $BV$ of functions of bounded variation is
often a better alternative than $W^{1,1}$ when studying differential
equations. Consequently, it was necessary to modify the scale
$W^{1,p(\cdot)}$ so that the lower end corresponded to $BV$. This
was done by Harjulehto, H{\"a}st{\"o} \& Latvala in \cite{HHL2}.
Schneider \cite{Sch_pp06,Sch07} has also investigated spaces of
variable smoothness, but these spaces are not included in the scale
of Leopold and Besov. Most recently, Diening, Harjulehto,
H{\"a}st{\"o}, Mizuta \& Shimomura \cite{DieHHMS_pp07} have studied
Sobolev embeddings when $p\to 1$ using Lebesgue spaces with an $L\log
L$-character on the lower end in place of $L^1$.


\section{Multiplier theorems}
\label{sect:multipliers}

Cruz-Uribe, Fiorenza, Martell and P{\'e}rez \cite[Corollary~2.1]{CFMP}
proved a very general extra\-polation theorem, which implies among other
things the following vector-valued maximal inequality, for variable
$p$ but constant $q$:

\begin{lemma}\label{CFMPvectorThm}
  Let $p \in C^{\ln}(\R^n)$ with $1 < p^- \leq p^+ < \infty$ and $1 <
  q < \infty$. Then
  \begin{align*}
  \big\| \,\| Mf_i\|_{l^q} \big\|_{p(\cdot)} \le C \big\| \,\|
  f_i\|_{l^q} \big\|_{p(\cdot)}.
  \end{align*}
\end{lemma}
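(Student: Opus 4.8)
The plan is to obtain this vector-valued inequality from the off-diagonal extrapolation principle already quoted as Lemma~\ref{lem:extrapol}, fed with the classical \emph{weighted} Fefferman--Stein vector-valued maximal inequality. Since only $q$ is constant here, the right-hand side $\bignorm{\norm{f_i}_{l^q}}_{p(\cdot)}$ will play the role of the ``$g$'' in Lemma~\ref{lem:extrapol}, and $\bignorm{\norm{Mf_i}_{l^q}}_{p(\cdot)}$ the role of the ``$f$''.

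First I would fix $r_0$ with $1 < r_0 < p^-$, which is possible precisely because $p^- > 1$. Recall (Andersen--John) that for $1 < r_0 < \infty$, $1 < q < \infty$ and every weight $\omega \in A_{r_0}$,
\begin{align*}
  \bigg( \int_{\R^n} \norm{M g_i(x)}_{l^q}^{r_0}\, \omega(x)\, dx \bigg)^{1/r_0}
  \leq c\, \bigg( \int_{\R^n} \norm{g_i(x)}_{l^q}^{r_0}\, \omega(x)\, dx \bigg)^{1/r_0},
\end{align*}
where $c$ depends only on $n$, $r_0$, $q$ and the $A_{r_0}$-constant of $\omega$. Since $r_0 > 1$, every $A_1$-weight is an $A_{r_0}$-weight with $A_{r_0}$-constant no larger than its $A_1$-constant; hence the displayed estimate holds for all $\omega \in A_1$ with $c$ depending on $\omega$ only through its $A_1$-constant. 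Applying Lemma~\ref{lem:extrapol} with this $r_0$ and with $\mathcal{G}$ equal to the collection of all pairs $\big(\norm{Mf_i}_{l^q}, \norm{f_i}_{l^q}\big)$, the asserted inequality follows.

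The only technicality is the a priori finiteness requirement in the conclusion of Lemma~\ref{lem:extrapol}. To handle it, I would first apply the extrapolation to the truncated functions $\min\set{\norm{Mf_i}_{l^q}, k}\, \chi_{B^n(0,k)}$, which lie in $L^{p(\cdot)}$ (as $p^+ < \infty$) and still satisfy the weighted bound above with the same constant, truncation only decreasing the left-hand side; then I would let $k \to \infty$ using the monotone convergence property of the Luxemburg norm. I do not expect any genuine obstacle here: the entire content of the statement sits in the extrapolation theorem of \cite{CFMP} combined with the weighted vector-valued maximal inequality, and precisely this separation of the constant parameter $q$ (absorbed into the weighted theory) from the variable $p$ (handled by extrapolation) is why the inequality can be stated only for constant $q$ — consistent with the failure of the fully variable analogue discussed later in this section.
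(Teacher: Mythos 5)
Your proof is correct and follows essentially the route the paper intends: the paper simply cites \cite[Corollary~2.1]{CFMP}, which is itself obtained exactly as you describe, by feeding the weighted Fefferman--Stein vector-valued maximal inequality (valid for $A_1\subset A_{r_0}$ weights with constant controlled by the $A_1$-constant) into the extrapolation theorem quoted as Lemma~\ref{lem:extrapol}. Your truncation argument correctly disposes of the a priori finiteness hypothesis, so there is no gap.
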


It would be very nice to generalize this estimate to the variable $q$ case.
In particular, this would allow us to use classical machinery to
deal with Triebel--Lizorkin spaces. Unfortunately, it turns out that it
is not possible:
if $q$ is not constant, then the inequality
\begin{align*}
  \bignorm{ \norm{ Mf_i }_{\lqxnu} }_{\Lpdotx} \le C\, \bignorm{
    \,\norm{ f_i}_{\lqxnu} }_{\Lpdotx}
\end{align*}
does not hold, even if $p$ is constant or $p(\cdot)=q(\cdot)$. For a
concrete counter-example consider $q$ with $q|_{\Omega_j}
= q_j$, $j=0,1$, and $q_0 \not= q_1$ and a constant $p$. Set $f_k
:= a_k\, \chi_{\Omega_0}$. Then $M f_k|_{\Omega_{1}} \geq c\, a_k\,
\chi_{\Omega_{1}}$. This shows that $l^{q_0} \hookrightarrow l^{q_1}$.
The opposite embedding follows in the same way, hence, we would
conclude that $l^{q_0} \cong l^{q_1}$, which is of course false.

In lieu of a vector-valued maximal inequality,
we prove in this section estimates which take into
account that there is a clear stratification in the Triebel--Lizorkin
space, namely, a given magnitude of cube size is used in exactly one
term in the sum. Recall that $\eta_m(x)=(1+|x|)^{-m}$ and
$\eta_{\nu,m}(x) = 2^{n\nu} \eta_m(2^\nu x)$. For a measurable
set $Q$ and an integrable function $g$ we denote
\begin{align*}
M_Q g := \fint_Q |g(x)|\, dx.
\end{align*}

\begin{lemma}
  \label{lem:etavsM}
  For every $m >n$ there exists $c = c(m,n)>0$ such that
  \begin{align*}
    \eta_{\nu,m} * \abs{g}\,(x) &\leq c\, \sum_{j \geq 0} 2^{-j(m-n)}
    \sum_{Q \in \mathcal{D}_{\nu -j}} \chi_{3Q}(x)\, M_Q g
  \end{align*}
for all $\nu \geq 0$, $g \in L^1_\loc$, and $x \in \R^n$.
\end{lemma}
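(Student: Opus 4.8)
The plan is to reduce everything to a single pointwise estimate on the convolution
\[
\eta_{\nu,m}*|g|(x) = \int_{\Rn} 2^{n\nu}\bigl(1+2^\nu|x-y|\bigr)^{-m}\,|g(y)|\,dy,
\]
by partitioning $\Rn$ into the dyadic annuli $R_0 := B^n(x,2^{-\nu})$ and $R_j := \{y : 2^{j-1}\le 2^\nu|x-y| < 2^j\}$ for $j\ge 1$. These sets are pairwise disjoint and cover $\Rn$, so $\eta_{\nu,m}*|g|(x)$ equals the sum over $j\ge 0$ of the integrals over $R_j$.

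First I would estimate the $R_j$-contribution. On $R_j$ one has $1 + 2^\nu|x-y|\ge 2^{j-1}$ (for $j\ge 1$), so the kernel is bounded there by $2^{m}2^{-jm}2^{n\nu}$, and $R_j\subset B^n(x,2^{j-\nu})$. Let $Q_j$ be the unique cube of $\mathcal{D}_{\nu-j}$ containing $x$ (using the half-open convention, so that $\mathcal{D}_{\nu-j}$ partitions $\Rn$ for every integer $\nu-j$); since $\ell(Q_j) = 2^{j-\nu}$, a coordinatewise check gives $B^n(x,2^{j-\nu})\subset 3Q_j$. Because $|3Q_j| = 3^n 2^{n(j-\nu)}$, the $R_j$-integral is at most $2^m 2^{-jm}2^{n\nu}\int_{3Q_j}|g| = 2^m 3^n\, 2^{-j(m-n)}\, M_{3Q_j}g$; the $j=0$ term is handled the same way using the trivial bound $1$ on the kernel. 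Summing over $j\ge 0$ yields $\eta_{\nu,m}*|g|(x)\le c\sum_{j\ge 0} 2^{-j(m-n)}\,M_{3Q_j}g$.

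The last step is to replace $M_{3Q_j}g$ by the sum appearing in the statement. Since the corners of $Q_j$ lie on the generation-$(\nu-j)$ lattice, the dilate $3Q_j$ is exactly the disjoint union of its $3^n$ dyadic neighbours $Q\in\mathcal{D}_{\nu-j}$, whence $M_{3Q_j}g = 3^{-n}\sum_Q M_Q g \le \sum_Q M_Q g$, the sum taken over those neighbours. For each such neighbour a further coordinatewise check shows $x\in 3Q$, so in fact $M_{3Q_j}g\le \sum_{Q\in\mathcal{D}_{\nu-j}}\chi_{3Q}(x)\,M_Q g$. Inserting this into the previous bound gives the asserted inequality, with $c$ depending only on $m$ and $n$ (for instance $c = 2^m 3^n$).

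I expect the only genuine work to be the two elementary geometric inclusions, $B^n(x,2^{j-\nu})\subset 3Q_j$ and $x\in 3Q$ for the dyadic neighbours $Q$ of $Q_j$; both reduce to routine one-dimensional interval inclusions. It is worth noting that the hypothesis $m>n$ is not used in the pointwise estimate itself — it enters only afterwards, to guarantee summability of the geometric series $\sum_j 2^{-j(m-n)}$ when this lemma is applied.
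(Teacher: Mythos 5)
Your proof is correct and follows essentially the same route as the paper: both decompose according to the dyadic scale of $\abs{x-y}$ and bound the kernel by $2^{-j(m-n)}\chi_{3Q}(x)\,\chi_Q(y)/\abs{Q}$ for a suitable $Q\in\mathcal{D}_{\nu-j}$. The only (immaterial) difference is that the paper picks the cube of generation $\nu-j$ containing $y$ and observes $x\in 3Q$, whereas you pick the cube containing $x$ and then split $3Q_j$ into its $3^n$ dyadic neighbours; your closing remark that $m>n$ is not needed for the pointwise bound itself is also accurate.
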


\begin{proof}
Fix $\nu \geq 0$, $g \in L^1_\loc$, and $x,y \in \R^n$.
If $\abs{x-y} \leq 2^{-\nu}$, then we choose
$Q\in \mathcal{D}_\nu$ which contains $x$ and $y$.
If $\abs{x-y} > 2^{-\nu}$, then we choose $j \in \N_0$ such that
  $2^{\nu-j} \leq \abs{x-y} \leq 2^{\nu-j+1}$ and let $Q \in
  \mathcal{D}_{\nu-j}$ be the cube containing~$y$.
  Note that $x \in 3Q$.
  In either case, we conclude that
    \begin{align*}
    2^{\nu n} \big(1+2^\nu\, \abs{x-y} \big)^{-m} \leq c\,
    2^{-j(m-n)} \chi_{3Q}(x)\, \frac{\chi_Q(y)}{\abs{Q}}.
  \end{align*}
Next we multiply this inequality by $|g(y)|$ and integrate
  with respect to $y$ over $\Rn$. This gives
  $\eta_{\nu,m} * \abs{g}\,(x) \leq c\, 2^{j(m-n)}\chi_{3Q}(x)\, M_Q g$,
which clearly implies the claim.
\end{proof}

For the proof of the Lemma \ref{thm:eta_old} we need the following result on the
maximal operator. It follows from Lemma~3.3 and
Corollary~3.4, \cite{DieHHMS_pp07}, since $p^+ < \infty$ in our case.

\begin{lemma}
  \label{cor:max_p_inside4}
  Let $p \in C^{\ln}(\R^n)$ with $1 \leq p^- \leq p^+ < \infty$.
  Then there exists $h \in \mathrm{weak\text{-}}L^1(\R^n) \cap
  L^\infty(\R^n)$ such that
  \begin{align*}
    M f(x)^{p(x)} &\leq
    c\, M\big(\abs{f(\cdot)}^{p(\cdot)}\big) (x) + \min\{|Q|,1\} h(x)
  \end{align*}
  for all $f \in L^{p(\cdot)}(\R^n)$ with
  $\norm{f}_{L^{p(\cdot)}(\R^n)} \leq 1$.
\end{lemma}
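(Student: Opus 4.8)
The plan is to read this off from the pointwise maximal-function estimates of~\cite{DieHHMS_pp07}. There, Lemma~3.3 proves a ``key estimate'' for $M$ engineered to survive the limiting case $p^-=1$ and also to allow $p^+=\infty$; in that generality it carries extra correction terms of $L\ln L$ type, and Corollary~3.4 repackages a clean consequence of it. The only point to check is that under our hypothesis $p^+<\infty$ these corrections lose their logarithmic character and are dominated by a single fixed radial function, which one may take to be $h(x)\approx (e+\abs{x})^{-n}$ --- the tail coming from the global $\log$-H\"older condition on $1/p$ at infinity. Such an $h$ lies in $\mathrm{weak\text{-}}L^1(\Rn)\cap L^\infty(\Rn)$, since it is bounded and $\abs{\set{h>t}}\approx t^{-1}$, but it is not in $L^1$, which is exactly why the weaker conclusion $\mathrm{weak\text{-}}L^1$ is stated. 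So the bulk of the work here is to invoke the two cited results, track how their error terms depend on $p^+$, and verify this membership of $h$.

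For a self-contained argument one would instead fix $x\in\Rn$ and a ball $B=B^n(x,r)$ and estimate $\bigl(\fint_B\abs{f(y)}\,dy\bigr)^{p(x)}$ directly. Writing $f=f\,\chi_{\set{\abs{f}>1}}+f\,\chi_{\set{\abs{f}\le 1}}$, the first piece is handled by Jensen's inequality with the exponent $p_B^-:=\essinf_B p\ge 1$, which together with $p_B^-\le p(y)$ on $\set{\abs{f}>1}$ gives
\[
  \fint_B \abs{f(y)}\,\chi_{\set{\abs{f}>1}}\,dy \leq \Bigl(\fint_B \abs{f(y)}^{p(y)}\,dy\Bigr)^{1/p_B^-}.
\]
The local $\log$-H\"older continuity of $p$ then turns the factor $r^{-n(p(x)/p_B^- - 1)}$ into a harmless constant when $r\lesssim 1$, while for $r\gtrsim 1$ one absorbs it using $\int_{\Rn}\abs{f}^{p(\cdot)}\le 1$. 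The second piece, where $\abs{f}\le 1$, is where the decay of $1/p$ near infinity enters and produces the tail function $h$. Adding the two contributions and taking the supremum over $r>0$ yields the estimate with an $h$ that depends on $x$ alone.

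The delicate regime is that of large balls $B^n(x,r)$ with $r\gtrsim\abs{x}$: there the local oscillation of $p$ is not enough, and one must invoke the behaviour of $1/p$ at infinity to keep the error term \emph{integrable}, rather than merely bounded (which would be useless), and to do so uniformly in $r$ so that the resulting $h$ is a function of $x$ alone. This uniform control, together with the verification that $h\in\mathrm{weak\text{-}}L^1\cap L^\infty$, is precisely the technical content already carried by \cite[Lemma~3.3, Corollary~3.4]{DieHHMS_pp07}, and so the route taken in the paper --- and the one I would take --- is simply to quote them.
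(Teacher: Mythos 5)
Your proposal matches the paper exactly: the paper's entire justification for this lemma is the one-line citation of Lemma~3.3 and Corollary~3.4 of \cite{DieHHMS_pp07}, noting that $p^+<\infty$ holds here, which is precisely the route you take. Your additional sketch of the underlying argument (the splitting of $f$ at height $1$, Jensen with exponent $p_B^-$, local $\log$-H\"older control for small balls and the decay of $1/p$ at infinity producing $h\approx(e+\abs{x})^{-n}\in\mathrm{weak\text{-}}L^1\cap L^\infty$) is consistent and goes beyond what the paper records.
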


We are now ready for a preliminary version of Theorem~\ref{thm:eta},
containing an additional condition.

\begin{lemma}
  \label{thm:eta_old}
  Let $p, q \in C^{\ln}(\R^n)$, $1 < p^- \leq p^+ < \infty$, $1 < q^-
  \leq q^+ < \infty$, and $(p/q)^-\cdot q^- > 1$. Then there exists $m>n$
  such that
  \begin{align*}
    \Bignorm{ \bignorm{ \eta_{\nu,m} \ast f_\nu }_{\lqxnu} }_{\Lpdotx}
    &\leq c\, \Bignorm{ \bignorm{ f_\nu }_{\lqxnu} }_{\Lpdotx}
  \end{align*}
  for every sequence $\set{f_\nu}_{\nu \in \N_0}$ of
  $L^1_\loc$-functions.
\end{lemma}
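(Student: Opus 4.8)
The plan is to invoke Lemma~\ref{lem:etavsM} to replace $\eta_{\nu,m}\ast f_\nu$ by a weighted sum --- indexed by a coarseness parameter $j\geq 0$ --- of averages $M_Q f_\nu$ over dyadic cubes of side $2^{-(\nu-j)}$, and then to exploit the scale stratification: on small dyadic cubes the exponents $p$ and $q$ are essentially constant, so the estimate can be reduced to the constant-exponent Fefferman--Stein maximal inequality of Lemma~\ref{CFMPvectorThm}.

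First I would normalise. Since $p^+,q^+<\infty$, by the relation between norm and modular it suffices to bound $\int_{\R^n}\bignorm{\eta_{\nu,m}\ast f_\nu(x)}_{\lqxnu}^{p(x)}\,dx$ by a constant whenever $\bignorm{\bignorm{f_\nu}_{\lqxnu}}_{\Lpdotx}\leq 1$; the normalisation also gives $\bignorm{f_\nu}_{\Lpdotx}\leq 1$ for each $\nu$, since $|f_\nu(x)|\leq F(x):=\bignorm{f_\nu(x)}_{\lqxnu}$ pointwise. Applying Lemma~\ref{lem:etavsM}, then Minkowski's inequality in $\ell^{q(x)}$ (valid since $q^->1$) and the triangle inequality in $L^{p(\cdot)}$, I reduce matters to proving, for each $j\geq 0$,
\[
\Bignorm{\bignorm{\,\sum_{Q\in\mathcal{D}_{\nu-j}}\chi_{3Q}(x)\,M_Q f_\nu\,}_{\lqxnu}}_{\Lpdotx}\leq c\,(1+j)^{C}
\]
with $C$ independent of $m$ and $j$; taking $m$ large enough then makes $\sum_{j\geq 0}2^{-j(m-n)}(1+j)^{C}$ finite.

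For fixed $j$ I would split the dyadic scales $k=\max\{\nu-j,0\}$ into $k\leq k_0$ and $k>k_0$, with $k_0$ depending only on $n$, $p$, $q$ and the $\log$-H\"older constant $c_{\log}$. The part $k\leq k_0$ comprises only $O(j)$ values of $\nu$, and since $M_Q f_\nu\leq c\,Mf_\nu(x)\leq c\,MF(x)$ whenever $x\in 3Q$, $Q\in\mathcal{D}_k$, it is pointwise dominated by $c\,(1+j)^{1/q^-}MF(x)$, whose $\Lpdotx$-norm is $\leq c\,(1+j)^{1/q^-}$ by boundedness of the scalar maximal operator (this is where $p^->1$ enters). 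For $k>k_0$ the cubes have side $\leq 2^{-k_0}$, so local $\log$-H\"older continuity makes $q$ (and $p$) oscillate on $3Q$ by an amount that can be made arbitrarily small by enlarging $k_0$; together with the a priori bound $\fint_Q|f_\nu|\leq c\,|Q|^{-1/p^-}$ (from $\bignorm{f_\nu}_{\Lpdotx}\leq1$) and, in the regime of small averages, an additive comparison of the type in Lemma~\ref{cor:max_p_inside4}, this lets me replace $q(x)$ by a constant $q_Q$ on each $3Q$ (and freeze $p$ analogously) up to errors controlled uniformly in $k$, hence adding up in $j$ at worst polynomially.

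Once the exponents are frozen, what remains is a constant-exponent, localised (dilated-cube) Fefferman--Stein vector-valued maximal inequality for the averaging operators $M_Q$, which are pointwise dominated by $M$; here Lemma~\ref{CFMPvectorThm} applies, provided the frozen secondary exponent lies in $(1,\infty)$. This is precisely the role of the hypothesis $(p/q)^-\cdot q^->1$: it guarantees that after the freezing and an application of H\"older's inequality in the $\ell$-variable the relevant constant exponent still exceeds $1$, whereas for a general pair $p,q$ one only knows $q^->1$ and the passage from a constant $\ell^{q_0}$ back to the variable $\ell^{q(\cdot)}$ breaks down, in accordance with the counterexample of Section~\ref{sect:multipliers}. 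I expect the main obstacle to be exactly this variable-to-constant reduction: bounding $\bigl(\fint_Q|f_\nu|\bigr)^{q(x)-q_Q}$ (and the analogous $p$-error) uniformly when the average is not confined to a bounded range, and organising the bookkeeping so that the resulting scale losses remain summable against the decay $2^{-j(m-n)}$ from Lemma~\ref{lem:etavsM} --- which is what dictates both the choice of $k_0$ and how large $m$ must be taken.
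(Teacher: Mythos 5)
Your opening reduction (normalisation, Lemma~\ref{lem:etavsM}, and disposing of the $j$-sum against the weights $2^{-j(m-n)}$) matches the paper's in substance --- the paper works with the modular and uses Jensen's inequality on the weighted $j$-sum rather than Minkowski on the norms, but that is cosmetic --- and your treatment of the coarse scales via the scalar maximal operator is unobjectionable. The gap is in the central step: the handling of the variable secondary exponent. Freezing $q$ to a constant $q_Q$ on each cube does not produce a single constant-exponent vector-valued inequality, because at a fixed $x$ the $\ell^{q(x)}$-sum runs over all scales $\nu$ simultaneously while the frozen values $q_{Q}$, $Q\in\mathcal{D}_{\nu-j}$, still depend on $\nu$; there is no one exponent for which Lemma~\ref{CFMPvectorThm} can then be invoked, and passing globally between $\ell^{q_0}$ and $\ell^{q_1}$ is exactly what the counterexample of Section~\ref{sect:multipliers} forbids. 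Alternatively, applying the additive comparison of Lemma~\ref{cor:max_p_inside4} with the full exponent $q(\cdot)$ leads to a vector-valued maximal inequality with secondary exponent $1$, which is false even for constant exponents. So as sketched, the fine-scale step does not close.

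The device you are missing is the factorisation $q(x)=\tfrac{q(x)}{q^-}\cdot q^-$. From $\int|f_\nu|^{p(x)}\,dx\le 1$ and $p\ge q/q^-$ (a first use of the hypothesis $(p/q)^-q^->1$) one gets $\norm{f_\nu}_{L^{q(\cdot)/q^-}}\le c$, so Lemma~\ref{cor:max_p_inside4} applies with exponent $q(\cdot)/q^-$ and gives $(M_Qf_\nu)^{q(x)/q^-}\le c\,M_Q\big(|f_\nu|^{q(\cdot)/q^-}\big)+c\min\{|Q|,1\}\,h(x)$. This transfers the variable exponent \emph{inside} the average while leaving the constant exponent $q^->1$ \emph{outside}, so the main term becomes $\bignorm{M(|f_\nu|^{q/q^-})}_{l^{q^-}_\nu}$ measured in $L^{p(\cdot)q^-/q(\cdot)}$, and Lemma~\ref{CFMPvectorThm} applies with constant secondary exponent $q^-$ and primary exponent $pq^-/q$, whose infimum exceeds $1$ precisely by the hypothesis $(p/q)^-q^->1$; unwinding, $\bignorm{|f_\nu|^{q(\cdot)/q^-}}_{l^{q^-}_\nu}^{q^-}=\sum_\nu|f_\nu|^{q(\cdot)}$ recovers the normalised right-hand side. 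The error terms $\min\{|Q|,1\}^{q^-}h(x)^{q^-}$ are summable over all $\nu$ and $j$ since $h\in\mathrm{weak}\text{-}L^1\cap L^\infty$ and $p q^-/q$ has infimum greater than $1$. With this mechanism in place your coarse/fine splitting at the threshold $k_0$ becomes unnecessary.
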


\begin{proof}
  By homogeneity, it suffices to consider the case
  \begin{align*}
    \Bignorm{ \bignorm{ f_\nu }_{\lqxnu} }_{\Lpdotx} &\leq 1.
  \end{align*}
  Then, in particular,
  \begin{align}
    \label{eq:2}
    \int_{\R^n} \abs{f_\nu(x)}^{p(x)}\,dx &\leq 1
  \end{align}
  for every $\nu\ge 1$.
  Using Lemma~\ref{lem:etavsM} and Jensen's inequality (i.e.,\ the
  embedding in weighted discrete Lebesgue spaces), we estimate
  \begin{align*}
    \lefteqn{\int_{\R^n} \Bigg( \sum_{\nu \geq 0}
      \abs{\eta_{\nu,m}
        \ast f_\nu(x)}^{q(x)} \Bigg)^{\frac{p(x)}{q(x)}}\,dx}
    \hspace{5mm} &
    \\
    &\leq \int_{\R^n} \Bigg( \sum_{\nu \geq 0} \bigg( \sum_{j \geq
      0} 2^{-j(m-n)} \sum_{Q \in \mathcal{D}_{\nu -j}} \chi_{3Q}(x)\, M_Q
    f_\nu\bigg)^{q(x)} \Bigg)^{\frac{p(x)}{q(x)}}\,dx
    \\
    &\leq c\int_{\R^n} \Bigg( \sum_{\nu \geq 0} \sum_{j \geq 0}
    2^{-j(m-n)} \bigg( \sum_{Q \in \mathcal{D}_{\nu -j}} \chi_{3Q}(x)\,
    M_Q f_\nu\bigg)^{q(x)} \Bigg)^{\frac{p(x)}{q(x)}}\,dx
    \\
    &\le c \int_{\R^n} \Bigg( \sum_{\nu \geq 0} \sum_{j \geq 0}
    2^{-j(m-n)} c\,\sum_{Q \in \mathcal{D}_{\nu -j}} \chi_{3Q}(x)\,
    ( M_Q f_\nu)^{q(x)} \Bigg)^{\frac{p(x)}{q(x)}}\,dx.
  \end{align*}
  For the last inequality we used the fact that the innermost
  sum contains only a finite, uniformly bounded number of non-zero terms.

  It follows from~\eqref{eq:2} and $p(x) \geq \frac{q(x)}{q^-}$ that
$\norm{f_\nu}_{L^{\frac{q(\cdot)}{q^-}}} \leq c$.
  Thus, by Lemma~\ref{cor:max_p_inside4},
  \begin{align*}
    ( M_Q f_\nu)^{\frac{q(x)}{q^-}} &\leq c\,
    M_Q\Big(\abs{f_\nu}^{\frac{q}{q^-}} \Big) + c\,\min
    \set{\abs{Q}, 1}\, h(x)
  \end{align*}
  for all $Q \in \mathcal{D}_{\nu - j}$ and $x \in Q$.  Combining this
  with the estimates above, we get
  \begin{align*}
    \lefteqn{\int_{\R^n} \bigg( \sum_{\nu \geq 0} \abs{\eta_\nu
        \ast f_\nu(x)}^{q(x)} \bigg)^{\frac{p(x)}{q(x)}}\,dx}
    \hspace{5mm} &
    \\
    &\leq c\,\int_{\R^n} \bigg( \sum_{\nu \geq 0} \sum_{j \geq 0}
    2^{-j(m-n)} \sum_{Q \in \mathcal{D}_{\nu -j}} \chi_{3Q}(x)\, \Big[
    M_Q\Big(\abs{f_\nu}^{\frac{q}{q^-}}\Big) \Big]^{q^-}
    \bigg)^{\frac{p(x)}{q(x)}}\,dx
    \\
    &\hphantom{\leq}\quad + c\,\int_{\R^n} \bigg( \sum_{\nu \geq 0}
    \sum_{j \geq 0} 2^{-j(m-n)} \sum_{Q \in \mathcal{D}_{\nu -j}}
    \chi_{3Q}(x)\, \big( \min \set{\abs{Q}, 1}\, h(x) \big)^{q^-}
    \bigg)^{\frac{p(x)}{q(x)}}\,dx
    \\
    &=: (I) + (II).
  \end{align*}
  Now we easily estimate that
  \begin{align*}
    (I) &\leq c\,\int_{\R^n} \bigg( \sum_{\nu \geq 0} \Big[
    M\Big(\abs{f_\nu}^{\frac{q}{q^-}}\Big)(x) \Big]^{q^-} \sum_{j \geq
      0} 2^{-j(m-n)} \sum_{Q \in \mathcal{D}_{\nu -j}} \chi_{3Q}(x)\,
    \bigg)^{\frac{p(x)}{q(x)}}\,dx
    \\
    &\leq c\,\int_{\R^n} \bigg( \sum_{\nu \geq 0} \Big[
    M\Big(\abs{f_\nu}^{\frac{q}{q^-}}\Big)(x) \Big]^{q^-}
    \bigg)^{\frac{p(x)}{q(x)}}\,dx
    \\
    &= c\, \int_{\R^n} \Bignorm{ M\Big(\abs{f_\nu}^{\frac{q}{q^-}}
      \Big)(x) }_{l^{q^-}_\nu}^{\frac{p(x)}{q(x)} q^-}\,dx.
  \end{align*}
  The vector valued maximal inequality, Lemma~\ref{CFMPvectorThm},
  with $(p/q)^-\cdot q^->1$ and $q^->1$, implies that the last expression is
  bounded since
  \begin{align*}
    \int_{\R^n} \bigg( \sum_{\nu \geq 0}
    \Big(\abs{f_\nu(x)}^{\frac{q(x)}{q^-}}\Big)^{q^-}
    \bigg)^{\frac{p(x)}{q(x)}}\,dx = \int_{\R^n} \bigg( \sum_{\nu
      \geq 0} \abs{f_\nu(x)}^{q(x)} \bigg)^{\frac{p(x)}{q(x)}}\,dx
    \leq 1.
  \end{align*}
  For the estimation of $(II)$ we first note the inequality
  \begin{align*}
  \sum_{\nu \geq 0} \sum_{j \geq
      0} 2^{-j(m-n)} \sum_{Q \in \mathcal{D}_{\nu -j}} \chi_{3Q}(x)\,
    \min \set{\abs{Q}, 1}^{q^-}
    &\le
    \sum_{\nu \geq 0}
    \sum_{j \geq 0} 2^{-j(m-n)} \min \set{2^{n(j-\nu)q^-}, 1}\\
    &\le
    \sum_{j \geq 0}
    2^{-j(m-n)} \Big( j + \sum_{\nu > j} 2^{n(j-\nu)q^-} \Big)\\
    &\le
    \sum_{j \geq 0}
    2^{-j(m-n)} ( j + 1 )
    \le
    c.
  \end{align*}
  We then estimate $(II)$ as follows:
  \begin{align*}
    (II) &\leq c\,\int_{\R^n} \bigg( h(x)^{q^-} \sum_{\nu \geq 0} \sum_{j \geq
      0} 2^{-j(m-n)} \sum_{Q \in \mathcal{D}_{\nu -j}} \chi_{3Q}(x)\,
    \min \set{\abs{Q}^{q^-} , 1}\,
    \bigg)^{\frac{p(x)}{q(x)}}\,dx\\
     &\leq c\,\int_{\R^n} h(x)^{\frac{p(x)}{q(x)}q^-}\,dx.
  \end{align*}
  Since $(p/q)^-\, q^- > 1$ and $h \in \text{weak-}L^1 \cap L^\infty$,
  the last expression is bounded.
\end{proof}


Using a partitioning trick, it is possible to remove the strange condition
$(p/q)^- \cdot q^- > 1$ from the previous lemma and prove our main result regarding
multipliers:

\begin{proof}[Proof of Theorem~\ref{thm:eta}]
  Because of the uniform continuity of $p$ and $q$, we can choose a
  finite cover $\set{\Omega_i}$ of $\Rn$ with the following properties:
  \begin{enumerate}
  \item each $\Omega_i\subset \R^n$, $1\le i\le k$, is open;
  \item the sets $\Omega_i$ cover $\R^n$, i.e.,\ $\bigcup_i \Omega_i =
    \Rn$;
  \item non-contiguous sets are separated in the sense that
    $d(\Omega_i, \Omega_j)>0$ if $|i-j|>1$; and
  \item we have $(p/q)_{A_i}^- q_{A_i}^- > 1$ for $1\le i\le k$, where
    $A_i:= \bigcup_{j=i-1}^{i+1} \Omega_i$ (with the understanding that
    $\Omega_0 = \Omega_{k+1}=\emptyset$).
  \end{enumerate}
  Let us choose an integer $l$ so that $2^l\le
  \min_{|i-j|>1} 3 d(\Omega_i, \Omega_j)<2^{l + 1}$.  Since there
  are only finitely many indices, the third condition implies that
  such an $l$ exists.

  Next we split the problem and work with the domains $\Omega_i$.  In
  each of these we argue as in the previous lemma to conclude that
\begin{align*}
\begin{split}
  &\int_{\Rn} \bigg(\sum_{\nu \geq 0}
  \abs{\eta_{\nu,m} \ast f_\nu(x)}^{q(x)} \bigg)^{\frac{p(x)}{q(x)}}\,dx
\le \sum_{i=1}^k \int_{\Omega_i} \bigg(\sum_{\nu \geq 0}
  \abs{\eta_{\nu,m} \ast f_\nu(x)}^{q(x)} \bigg)^{\frac{p(x)}{q(x)}}\,dx\\
  & \quad \leq c\sum_{i=1}^k \int_{\Omega_i} \bigg( \sum_{\nu \geq 0} \sum_{j \geq
    0} 2^{-j(m-n)} \,\sum_{Q \in \mathcal{D}_{\nu -j}} \chi_{3Q}(x)\,
  (M_Q f_\nu)^{q(x)} \bigg)^{\frac{p(x)}{q(x)}}\,dx.
\end{split}
\end{align*}
{}From this we get
\begin{align*}
\begin{split}
  \int_{\Omega_i} \bigg(\sum_{\nu \geq 0}
  \abs{\eta_{\nu,m} \ast f_\nu(x)}^{q(x)} \bigg)^{\frac{p(x)}{q(x)}}\,dx
  &\leq c\int_{\Omega_i} \bigg( \sum_{\nu \geq 0}
  \sum_{j=0}^{\nu + l} 2^{-j(m-n)} \,\sum_{Q \in \mathcal{D}_{\nu
      -j}} \chi_{3Q}(x)\,
  (M_Q f_\nu)^{q(x)} \bigg)^{\frac{p(x)}{q(x)}}\,dx \\
  & \qquad+ c\int_{\Omega_i} \bigg( \sum_{\nu \geq 0} \sum_{j \geq \nu
    + l} 2^{-j(m-n)} \,M f_\nu(x)^{q(x)}
  \bigg)^{\frac{p(x)}{q(x)}}\,dx.
\end{split}
\end{align*}
The first integral on the right hand side is handled as in the previous
proof. This is possible, since the cubes in this integral are always
in $A_i$ and $(p/q)^-_{A_i} q^-_{A_i} >1$.

So it remains only to bound
\begin{align*}
  \int_{\Omega_i} \bigg( \sum_{\nu \geq 0} \sum_{j \geq \nu + l}
  2^{-j(m-n)} \,M f_\nu(x)^{q(x)} \bigg)^{\frac{p(x)}{q(x)}}\,dx & \le
  c \int_{\Omega_i} \bigg( \sum_{\nu \geq 0} 2^{-(m-n) \nu}  M
  f_\nu(x)^{q(x)} \bigg)^{\frac{p(x)}{q(x)}}\,dx.
\end{align*}
For a non-negative sequence $(x_i)$ we have
\begin{align*}
  \Big( \sum_{i\ge 0} 2^{-i(m-n)} x_i
  \Big)^r &\leq
\begin{cases}
c(r)\,\sum_{i\ge 0} 2^{-i(m-n)} x_i^r & \text{if } r \geq 1\\
 \sum_{i\ge 0} 2^{-i(m-n)r} x_i^r, & \text{if } r \leq 1.
 \end{cases}
\end{align*}
We apply this estimate for $r=\frac{p(x)}{q(x)}$ and conclude that
\begin{align*}
 \int_{\Omega_i} \bigg( \sum_{\nu \geq 0} 2^{-(m-n) \nu} \,
    M f_\nu(x)^{q(x)} \bigg)^{\frac{p(x)}{q(x)}}\,dx
 \le c \sum_{\nu \geq 0} 2^{-(m-n) \nu \min\set{1,(\frac{p}{q})^-}}
  \int_{\Omega_i}  M f_\nu(x) ^{p(x)} \,dx.
\end{align*}
The boundedness of the maximal operator implies that the integral may
be estimated by a constant, since $\int \abs{f_\nu(x)}^{p(x)} \, dx\le
1$. We are left with a geometric sum, which certainly converges.
\end{proof}


\section{Proofs of the decomposition results}
\label{sect:decomposition}

We can often take care of the variable smoothness simply by
treating it as a constant in a cube, which is what the next lemma is for.

\begin{lemma}
  \label{lem:beta_eta}
 Let $\alpha$ be as in the Standing Assumptions. There exists $d\in (n,\infty)$
such that if  $m>d$, then
  \begin{align*}
    2^{\nu\alpha(x)} \eta_{\nu,2m}(x-y) &\leq c 2^{\nu\alpha(y)} \eta_{\nu,m}(x-y)
  \end{align*}
  for all $x,y\in \Rn$.
\end{lemma}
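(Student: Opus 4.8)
The plan is to reduce the asserted inequality to an elementary pointwise statement about the factor $2^{\nu\alpha}$, and then to exploit the local $\log$-H\"older continuity of $\alpha$ through a dichotomy according to whether $\abs{x-y}$ is small or large relative to $2^{-\nu}$.

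First I would unwind the definitions. Since $\eta_{\nu,m}(x-y)=2^{n\nu}(1+2^\nu\abs{x-y})^{-m}$, and likewise with $2m$ in place of $m$, the common factor $2^{n\nu}$ cancels and the claim becomes equivalent to
\[
  2^{\nu(\alpha(x)-\alpha(y))}\le c\,\bigl(1+2^\nu\abs{x-y}\bigr)^{m}\qquad\text{for all }x,y\in\Rn,\ \nu\in\N_0.
\]
Two cases are trivial: if $\nu=0$, or if $\alpha(x)\le\alpha(y)$ (in particular if $x=y$), the left side is at most $1$, so any $c\ge1$ works. Hence I may assume $\nu\ge1$, $x\ne y$, and $\alpha(x)>\alpha(y)$; write $r:=\abs{x-y}>0$. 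Local $\log$-H\"older continuity gives $0<\alpha(x)-\alpha(y)\le c_{\log}/\log(e+1/r)$, and in particular $\alpha(x)-\alpha(y)\le c_{\log}$ always, since $\log(e+1/r)\ge\log e=1$. (So only the \emph{local} $\log$-H\"older hypothesis on $\alpha$ is really used.)

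The core is the dichotomy $r\le2^{-\nu}$ versus $r>2^{-\nu}$. If $r\le2^{-\nu}$, then $1/r\ge2^\nu$, so $\log(e+1/r)\ge\nu\log2$, hence $\nu(\alpha(x)-\alpha(y))\le c_{\log}/\log2$; the left side is then bounded by the absolute constant $2^{c_{\log}/\log2}$ while the right side is at least $1$, so this range is handled with no condition on $m$. If $r>2^{-\nu}$, set $t:=2^\nu r>1$, so $1+t>\max\{2,t\}$ and thus $\log_2(1+t)\ge\log_2 t$. Now I would split according to the size of $r$. When $r\ge1$, $\log_2(1+t)\ge\log_2 t\ge\nu$, and combined with $\alpha(x)-\alpha(y)\le c_{\log}$ this gives the claim once $m\ge c_{\log}$. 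When $2^{-\nu}<r<1$, put $a:=\log_2(1/r)\in(0,\nu)$, so $t=2^{\nu-a}$, $\log_2(1+t)\ge\nu-a$, and $\log(e+1/r)\ge\log2^{a}=a\log2$; here I compare $a$ with $\nu/2$. If $a\le\nu/2$, then $\log_2(1+t)\ge\nu/2$, and $\alpha(x)-\alpha(y)\le c_{\log}$ suffices as soon as $m\ge2c_{\log}$; if $a>\nu/2$, then $\log(e+1/r)\ge a\log2>\tfrac\nu2\log2$, forcing $\nu(\alpha(x)-\alpha(y))\le2c_{\log}/\log2$, so the left side is again bounded by an absolute constant (and the right side is $\ge1$). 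Collecting the cases, it is enough to take $d:=n+2c_{\log}$, which lies in $(n,\infty)$ and exceeds $2c_{\log}$ (any larger choice works too, since the statement only asserts existence), together with $c:=e^{2c_{\log}}$.

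I do not expect a genuinely hard step: once the reduction to the displayed scalar inequality is in place the rest is a short case analysis. The one point requiring care is the regime $t>1$ with $r$ close to $1$ and $\nu$ large, where the crude bound $\log(e+1/r)\ge1$ is too weak; there one must balance the two competing lower bounds $\log(e+1/r)\ge a\log2$ and $\log_2(1+2^\nu r)\ge\nu-a$ against each other, which is exactly what the split at $a=\nu/2$ accomplishes.
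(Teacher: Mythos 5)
Your proof is correct and follows essentially the same route as the paper's: both reduce the claim to the scalar inequality $2^{\nu(\alpha(x)-\alpha(y))}\le c\,(1+2^\nu\abs{x-y})^m$ and then play the local $\log$-H\"older bound on $\nu(\alpha(x)-\alpha(y))$ against $m\log_2(1+2^\nu\abs{x-y})$. The paper packages your case analysis into a single parameter $k$ chosen so that $1+2^\nu\abs{x-y}\approx 2^k$, obtaining $2^{\nu\abs{\alpha(x)-\alpha(y)}}\le c\,2^{k c_{\log}}$ in one step and hence the slightly sharper threshold $m\ge c_{\log}$ in place of your $m\ge 2c_{\log}$, a difference that is immaterial for the statement.
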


\begin{proof}
  Choose $k \in \N_0$ as small as possible subject to the condition
  that $\abs{x-y} \leq 2^{-\nu + k}$. Then $1+ 2^{\nu} \abs{x-y}\approx
  2^k$.  We estimate that
  \begin{align*}
    \frac{\eta_{\nu,2m}(x-y)}{\eta_{\nu,m}(x-y)} &\leq c\, (1+
    2^k)^{-m} \leq c 2^{ -km}.
  \end{align*}
  On the other hand, the $\log$-H{\"o}lder continuity of $\alpha$ implies that
  \begin{align*}
    2^{\nu (\alpha(x)-\alpha(y))} \ge
    2^{-\nu c_\text{log} /\log(e+1/|x-y|)} \ge
    2^{-k c_\text{log}} \, |x-y|^{- c_\text{log} /\log(e+1/|x-y|)}
    \ge
    c\, 2^{-k c_\text{log}}.
  \end{align*}
The claim follows from these estimates provided we choose $m\ge c_\text{log}$.
\end{proof}

\begin{proof}[Proof of Theorem~\ref{thm:Sphi_bnd}]
  Let $f \in \Fspq{\alpha(\cdot)}{p(\cdot)}{q(\cdot)}$. Then we have the
  representation
  \begin{align*}
    f &= \sum_{Q \in \mathcal{D}^+} \skp{\phi_Q}{f}\, \psi_Q = \sum_{Q
      \in \mathcal{D}^+} \abs{Q}^{\frac{1}{2}} \phi_\nu * f(x_Q)\,
    \psi_Q.
  \end{align*}
Let $r \in (0,\min
  \set{p^-,q^-})$ and let $m$ be so large that
  Lemma~\ref{lem:beta_eta} applies. The functions $\phi_\nu * f$
  fulfill the requirements of Lemma~\ref{lem:est_g}, so
  \begin{align*}
    \norm{S_\phi f}_{\fspq{\alpha(\cdot)}{p(\cdot)}{q(\cdot)}}
    &= \Biggnorm{ \biggnorm{ 2^{\nu \alpha(x)} \sum_{Q \in
          \mathcal{D}_\nu} \phi_\nu * f(x_Q)\, \chi_Q}_{\lqxnu} }_{\Lpdotx}
    \\
    &\leq c\, \Bignorm{ \bignorm{2^{\nu \alpha(x)} \big(\eta_{\nu,2m} *
        \abs{\phi_\nu * f}^r\big)^{\frac{1}{r}}}_{\lqxnu} }_{\Lpdotx}
    \\
    &= c\, \Bignorm{ \bignorm{2^{\nu \alpha(x)r} \eta_{\nu,2m} *
        \abs{\phi_\nu * f}^r}_{\lqrxnu} }_{\Lprdotx}^{\frac{1}{r}}.
  \end{align*}
  By Lemma~\ref{lem:beta_eta} and Theorem~\ref{thm:eta}, we further conclude
that
  \begin{align*}
    \norm{S_\phi f}_{\fspq{\alpha(\cdot)}{p(\cdot)}{q(\cdot)}}
&\leq
    c\, \Bignorm{ \bignorm{\eta_{\nu,m} * \big( 2^{\nu \alpha(\cdot)}
        \abs{\phi_\nu * f} \big)^r}_{\lqrxnu}
    }_{\Lprdotx}^{\frac{1}{r}}\\
    &\leq c\, \Bignorm{ \bignorm{2^{\nu \alpha(x) r} \abs{\phi_\nu * f}^r
      }_{\lqrxnu} }_{\Lprdotx}^{\frac{1}{r}}
    \\
    &= c\, \Bignorm{ \bignorm{2^{\nu \alpha(x)} \phi_\nu * f
      }_{\lqxnu} }_{\Lpdotx}.
  \end{align*}
  This proves the theorem.
\end{proof}

In order to prove Theorem~\ref{thm:Sphi_inv} we need to split our
domain into several parts. The following lemma will be applied to
each part. For the statement we need Triebel--Lizorkin spaces
defined in domains of $\Rn$. These are achieved simply by replacing
$L^{p(\cdot)}(\Rn)$ by $L^{p(\cdot)}(\Omega)$ in the definitions of
$\Fspq{\alpha(\cdot)}{p(\cdot)}{q(\cdot)}$ and
$\fspq{\alpha(\cdot)}{p(\cdot)}{q(\cdot)}$:
\begin{align*}
  \norm{f}_{\Fspq{\alpha(\cdot)}{p(\cdot)}{q(\cdot)}(\Omega)} &:=
  \Bignorm{ \bignorm{ 2^{\nu \alpha(x)}\, \phi_\nu \ast
      f(x)}_{\lqxnu} }_{\Lpdotx(\Omega)}
\end{align*}
and
\begin{align*}
  \bignorm{\set{s_Q}_{Q}}_{\fspq{\alpha(\cdot)}{p(\cdot)}{q(\cdot)}(\Omega)}
  &:= \Biggnorm{ \biggnorm{ 2^{\nu\alpha(x)} \sum_{Q \in
        \mathcal{D}_\nu} \abs{s_Q}\, \abs{Q}^{-\frac{1}{2}}\, \chi_Q
    }_{\lqxnu} }_{\Lpdotx(\Omega)}.
\end{align*}

\begin{lemma} \label{lem:Sphi_inv} Let $p$, $q$, and $\alpha$ be as in
  the Standing Assumptions and define functions $J=n/\min\{1,p,q\}$
  and $N=J-n-\alpha$. Let $\Omega$ be a cube or the complement of a
  finite collection of cubes and suppose that $\{m_Q\}_Q$, $Q\subset
  \Omega$, is a family of
$(J^+ - n - \alpha^- + \epsilon,\alpha^+ +1+ \epsilon)$-smooth
molecules, for some $\epsilon > 0$.  Then
  \begin{align*}
    \norm{f}_{\Fspq{\alpha(\cdot)}{p(\cdot)}{q(\cdot)}(\Omega)} \leq c\,
    \norm{\set{s_Q}_Q}_{\fspq{\alpha(\cdot)}{p(\cdot)}{q(\cdot)}(\Omega)},
    \quad\text{where}\quad
    f = \sum_{\nu \geq 0}
    \sum_{\stackrel{Q \in \mathcal{D}_\nu}{Q\subset\Omega}} s_Q m_Q
  \end{align*}
  and $c>0$ is independent of $\set{s_Q}_Q$ and $\set{m_Q}_Q$\,.
\end{lemma}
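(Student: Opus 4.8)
The plan is to reduce the estimate for $f=\sum_{\nu}\sum_{Q\in\mathcal{D}_\nu, Q\subset\Omega}s_Q m_Q$ to an application of the multiplier theorem (Theorem~\ref{thm:eta}) after peeling off the variable smoothness with Lemma~\ref{lem:beta_eta}. First I would apply $\phi_\mu\ast$ to the series and split into the ``low'' part $\nu\le\mu$ and the ``high'' part $\nu>\mu$, so that
\begin{align*}
  \phi_\mu\ast f = \sum_{\nu\le\mu}\sum_{Q\in\mathcal{D}_\nu}s_Q\,\phi_\mu\ast m_Q
  + \sum_{\nu>\mu}\sum_{Q\in\mathcal{D}_\nu}s_Q\,\phi_\mu\ast m_Q.
\end{align*}
The standard almost-orthogonality estimate for smooth molecules — using the moment condition \ref{itm:mol1} when $\nu<\mu$ (to gain $2^{-(\mu-\nu)(k+n+1)}$ from the vanishing moments of $\phi_\mu$, since $\phi_\mu$ has all moments vanishing) and the decay/derivative condition \ref{itm:mol2} when $\nu>\mu$ (to gain $2^{-(\nu-\mu)(l+1)}$ by moving derivatives off $\phi_\mu$) — gives, for a suitably large $m$,
\begin{align*}
  \abs{\phi_\mu\ast m_Q(x)} \le c\, 2^{-\abs{\mu-\nu}\,L}\,\abs{Q}^{1/2}\,\eta_{\mu\wedge\nu,\,m}(x-x_Q),
\end{align*}
where $L$ can be taken as large as we like by choosing enough moments/derivatives; the hypotheses of the lemma (molecules of order $(J^+-n-\alpha^-+\epsilon,\alpha^++1+\epsilon)$) guarantee $L$ exceeds the thresholds $J^+-n$ and $\alpha^++1$ that appear below. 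This is the computation carried out in the technical appendix; I would cite the appropriate lemma there.

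Next I would sum over $Q\in\mathcal{D}_\nu$ at each fixed level. Writing $\min\wedge = \mu\wedge\nu$, the sum $\sum_{Q\in\mathcal{D}_\nu}\abs{Q}^{1/2}\abs{s_Q}\,\eta_{\min\wedge,m}(x-x_Q)$ is comparable, via Lemma~\ref{lem:etavsM}-type reasoning, to $\eta_{\min\wedge,m}\ast\bigl(\sum_{Q\in\mathcal{D}_\nu}\abs{s_Q}\abs{Q}^{-1/2}\chi_Q\bigr)(x)$ up to the scale mismatch between $\nu$ and $\mu\wedge\nu$, which costs a further $2^{-\abs{\mu-\nu}n}$-type factor that is absorbed into $L$. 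Denoting $t_\nu(x):=\sum_{Q\in\mathcal{D}_\nu}\abs{s_Q}\abs{Q}^{-1/2}\chi_Q(x)$, one arrives at
\begin{align*}
  2^{\mu\alpha(x)}\abs{\phi_\mu\ast f(x)} \le c\sum_{\nu\ge 0}2^{-\abs{\mu-\nu}L}\,2^{(\mu-\nu)\alpha(x)}\,\eta_{\mu\wedge\nu,m}\ast\bigl(2^{\nu\alpha(\cdot)}t_\nu\bigr)(x).
\end{align*}
Here I would use Lemma~\ref{lem:beta_eta} to replace $2^{\nu\alpha(x)}$ under the convolution by $2^{\nu\alpha(y)}$ at the cost of passing from $\eta_{\cdot,2m}$ to $\eta_{\cdot,m}$, and absorb the factor $2^{(\mu-\nu)\alpha(x)}$ (bounded by $2^{\abs{\mu-\nu}\alpha^+}$) into the exponential gain, which is legitimate precisely because $L>\alpha^++1$. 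The upshot is a pointwise bound of $2^{\mu\alpha(x)}\abs{\phi_\mu\ast f(x)}$ by a convolution-type average of $g_\nu:=2^{\nu\alpha(x)}t_\nu(x)$ against a summable-in-$\abs{\mu-\nu}$ family of $\eta$-kernels.

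Finally, I would take the $\ell^{q(x)}_\mu$ norm in $\mu$ and then the $L^{p(\cdot)}(\Omega)$ norm in $x$. Using the $r$-trick (Lemma~\ref{lem:est_g}) with $r\in(0,\min\{1,p^-,q^-\})$ to handle the case of small exponents — this is where the condition involving $J=n/\min\{1,p,q\}$ enters, determining how much decay $L$ we must have spent — the double sum over $\mu$ and $\nu$ with geometric weight $2^{-\abs{\mu-\nu}L}$ is controlled by Young's inequality for $\ell^1\ast\ell^{q(x)}$, reducing matters to $\bignorm{\bignorm{\eta_{\nu,m}\ast g_\nu^r}_{\ell^{q(x)/r}_\nu}}_{L^{p(\cdot)/r}(\Omega)}^{1/r}$, which Theorem~\ref{thm:eta} bounds by $\bignorm{\bignorm{g_\nu}_{\ell^{q(x)}_\nu}}_{L^{p(\cdot)}(\Omega)} = \norm{\{s_Q\}_Q}_{\fspq{\alpha(\cdot)}{p(\cdot)}{q(\cdot)}(\Omega)}$. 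The main obstacle is bookkeeping the three competing exponential gains/losses — from moments ($L$ vs.\ $J^+-n$), from derivatives ($L$ vs.\ $\alpha^++1$), and from the $\alpha$-shift in Lemma~\ref{lem:beta_eta} — and verifying that the stated molecular orders leave enough room after the $r$-trick; the restriction of $\Omega$ to a cube or complement of finitely many cubes is only needed so that the number of dyadic cubes $Q\subset\Omega$ at each scale intersecting a fixed $3Q'$ stays uniformly bounded, exactly as in the proof of Theorem~\ref{thm:Sphi_bnd}.
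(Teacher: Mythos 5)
Your plan follows essentially the same route as the paper: almost--orthogonality of $\phi_\mu\ast m_Q$ via the moment/decay conditions (the paper's Lemma~\ref{lem:van_moments}), reduction to $\eta$-convolutions against $\sum_Q\abs{s_Q}\abs{Q}^{-1/2}\chi_Q$, the $\alpha$-shift of Lemma~\ref{lem:beta_eta}, a discrete H\"older/Young step for the geometric factor $2^{-\abs{\mu-\nu}L}$, and two applications of Theorem~\ref{thm:eta}. Two bookkeeping slips should be fixed before this becomes a proof. First, the roles of \ref{itm:mol1} and \ref{itm:mol2} are swapped in your case split: when the molecule sits at the \emph{coarser} scale one pairs the vanishing moments of $\phi_\mu$ (all of which vanish) with the \emph{derivative} bounds \ref{itm:mol2} of $m_Q$, and when the molecule is at the \emph{finer} scale one pairs the moment condition \ref{itm:mol1} of $m_Q$ with the smoothness of $\phi_\mu$; your parentheticals partly say this, but the cited conditions point the wrong way, and used literally the gain would run in the wrong direction. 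Second, the small-exponent step is not Lemma~\ref{lem:est_g} (the $r$-trick), which requires band-limited functions and plays no role here; the paper instead raises the pointwise double-sum bound to the power $r\le 1$, uses $l^r\hookrightarrow l^1$, and then invokes Lemma~\ref{lem:etapower} to compute $\big(\eta_{\nu,2m}\ast\eta_{\mu,2m}\ast\chi_{Q_\mu}\big)^r$ up to the loss $2^{(\mu-\nu)_+n(1-r)}$ --- it is exactly this loss that consumes the $J^+-n-\alpha^-$ part of the molecular order, so you cannot verify the stated smoothness hypotheses suffice without making that step explicit. Finally, the shape restriction on $\Omega$ is not about bounded overlap of cubes; it is there so that, in the proof of Theorem~\ref{thm:Sphi_inv}, the oscillation of $p$, $q$, $\alpha$ over each piece is small enough that a single $r$ (hence a single pair $k_1$, $k_2$) works on that piece.
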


\begin{proof}
  Let $2m$ be sufficiently large, i.e., larger than $M$ (from the definition
of molecules.  Choose $r \in (0,\min\set{1,p^-,q^-})$, $\epsilon > 0$,
  $k_1 \geq \alpha^+ + 2\epsilon$ and
  $k_2 \geq \frac{n}{r} - n - \alpha^- + 2\epsilon$
so that $\{m_Q\}$ are $(k_2,k_1+1,2m)$-smooth molecules.
 Define
  $k(\nu,\mu) := k_1\, (\nu - \mu)_+ +k_2\, (\mu-\nu)_+$ and
  $\widetilde{s}_{Q_\mu} := s_{Q_\mu}\, \abs{Q_\mu}^{-1/2}$.

  Next we apply Lemma~\ref{lem:van_moments} twice:
  with $g=\phi_\nu$, $h(x)=m_{Q_\mu}(x-x_{Q_\mu})$ and $k=\lfloor k_2 \rfloor+1$
  if $\mu\ge \nu$, and
  $g(x)=m_{Q_\mu}(x-x_{Q_\mu})$, $h=\phi_\nu$ and $k=\lfloor k_1 \rfloor+1$
  otherwise. This and Lemma~\ref{lem:conv_eta_chi} give
  \begin{align*}
    \abs{\phi_\nu \ast m_{Q_\mu}(x)}
    &\le c 2^{-k(\nu,\mu)} \abs{Q_\mu}^{1/2} \eta_{\nu,2m} \ast \eta_{\mu,2m}(x+x_{Q_\mu}) \\
    &\approx c 2^{-k(\nu,\mu)} \abs{Q_\mu}^{-1/2} (\eta_{\nu,2m} \ast
    \eta_{\mu,2m} \ast \chi_{Q_\mu})(x).
  \end{align*}
  Thus, we have
  \begin{align*}
    \norm{f}_{\Fspq{\alpha(\cdot)}{p(\cdot)}{q(\cdot)}(\Omega)}
    &= \Biggnorm{ \biggnorm{ \sum_{\mu \geq 0}
        \smash{\sum_{\substack{Q_\mu
            \in \mathcal{D}_\mu \\ Q_\mu \subset \Omega}}}
        2^{\nu\alpha(x)} |s_{Q_\mu}|\, \phi_\nu \ast
        m_{Q_\mu}}_{\lqxnu} }_{\Lpdotx(\Omega)}
    \\
    &\leq \Biggnorm{ \biggnorm{ \sum_{\mu \geq0} \smash{\sum_{Q_\mu \in
          \mathcal{D}_\mu}} \abs{\widetilde{s}_{Q_\mu}}\,
        2^{\nu\alpha(x)-k(\nu,\mu)} \eta_{\nu,2m} \ast \eta_{\mu,2m}
        \ast \chi_{Q_\mu}}_{\lqxnu} }_{\Lpdotx(\Omega)}
    \\
    &= \Biggnorm{ \biggnorm{ \bigg( \sum_{\mu \geq 0} \sum_{Q_\mu \in
          \mathcal{D}_\mu} \abs{\widetilde{s}_{Q_\mu}}\,
        2^{\nu\alpha(x)-k(\nu,\mu)} \eta_{\nu,2m} \ast \eta_{\mu,2m}
        \ast \chi_{Q_\mu}\bigg)^r}_{\lqrxnu}
    }_{\Lprdotx(\Omega)}^{\frac{1}{r}}.
  \end{align*}

  Next we use the embedding $l^r\hookrightarrow l^1$ and obtain the estimate on the
  term inside of the two norms above as follows
  \begin{align*}
    & \bigg( \sum_{\mu \geq 0} \sum_{Q_\mu \in \mathcal{D}_\mu}
       \abs{\widetilde{s}_{Q_\mu}} \, 2^{\nu\alpha(x)-k(\nu,\mu)}
      \eta_{\nu,2m} \ast \eta_{\mu,2m} \ast  \chi_{Q_\mu}\bigg)^r
    \\
    &\qquad \leq \sum_{\mu \geq 0} \sum_{Q_\mu \in \mathcal{D}_\mu}
     \abs{\widetilde{s}_{Q_\mu}}^r 2^{\nu\alpha(x)r- k(\nu,\mu) r
      } (\eta_{\nu,2m} * \eta_{\mu,2m} *
    \chi_{Q_\mu})^r.
  \end{align*}
  By Lemma~\ref{lem:etapower} we conclude that
  \begin{equation}
    \label{muNuAssumption}
    \begin{split}
      & 2^{\nu\alpha(x)r - k(\nu, \mu) r} (\eta_{\nu,2m} * \eta_{\mu,2m} *
      \chi_{Q_\mu})^r
      \\
      & \qquad\leq c\, 2^{\nu\alpha(x)r -k_1 r \,(\nu - \mu)_+ -
        k_2r(\mu-\nu)_+ +n(1-r) (\nu -
          \mu)_+} \eta_{\nu,2mr} * \eta_{\mu,2mr} * \chi_{Q_\mu}
      \\
      & \qquad\leq c\,2^{\mu\alpha(x)r -2\epsilon\abs{\nu - \mu}}
      \eta_{\nu,2mr} * \eta_{\mu,2mr} * \chi_{Q_\mu},
    \end{split}
  \end{equation}
  where, in the second step, we used the assumptions on $k_1$ and
  $k_2$.  We use this with our previous estimate to get
  \begin{align*}
    \norm{f}_{\Fspq{\alpha(\cdot)}{p(\cdot)}{q(\cdot)}(\Omega)} &\leq
    \Biggnorm{ \biggnorm{ \sum_{\mu \geq 0} \sum_{Q_\mu \in
          \mathcal{D}_\mu} \abs{\widetilde{s}_{Q_\mu}}^r\, \,
        2^{\mu\alpha(x)r-2\epsilon\abs{\nu - \mu}r}\, \eta_{\nu,2mr} *
        \eta_{\mu,2mr} * \chi_{Q_\mu}}_{\lqrxnu}
    }_{\Lprdotx(\Omega)}^{\frac{1}{r}}.
  \end{align*}

  We apply Lemma~\ref{lem:beta_eta} and
  Theorem~\ref{thm:eta} to conclude that
  \begin{align*}
    \norm{f}_{\Fspq{\alpha(\cdot)}{p(\cdot)}{q(\cdot)}(\Omega)} &\leq
    \Biggnorm{ \biggnorm{ \eta_{\nu,mr} * \bigg( \sum_{\mu \geq 0}
        \sum_{Q_\mu \in \mathcal{D}_\mu} \abs{\widetilde{s}_{Q_\mu}}^r
        2^{\mu\alpha(\cdot) r-2\epsilon\abs{\nu - \mu}r}\, \eta_{\mu,2mr}
        \ast \chi_{Q_\mu} \bigg) }_{\lqrxnu}
    }_{\Lprdotx(\Omega)}^{\frac{1}{r}}
    \\
    &\leq \Biggnorm{ \biggnorm{ \sum_{\mu \geq 0} \sum_{Q_\mu \in
          \mathcal{D}_\mu} \abs{\widetilde{s}_{Q_\mu}}^r\, 2^{\mu
          \alpha(x) r-2\epsilon\abs{\nu - \mu}r}\, \eta_{\mu,2mr} *
        \chi_{Q_\mu} }_{\lqrxnu} }_{\Lprdotx(\Omega)}^{\frac{1}{r}}.
  \end{align*}
  We estimate the inner part (which depends on $x$) pointwise as follows:
  \begin{align*}
    \lefteqn{\biggnorm{ \sum_{\mu \geq 0} \sum_{Q_\mu \in
          \mathcal{D}_\mu} \abs{\widetilde{s}_{Q_\mu}}^r\,
        2^{\mu\alpha(x) r-2\epsilon\abs{\nu - \mu}r}\, \eta_{\mu,2mr}
        * \chi_{Q_\mu}}_{\lqrxnu}^{\frac{q(x)}{r}} } \hspace{5mm} &
    \\
    &= \sum_{\nu \geq 0} \biggabs{ \sum_{\mu \geq 0} \sum_{Q_\mu \in
        \mathcal{D}_\mu} \abs{\widetilde{s}_{Q_\mu}}^r\,
      2^{\mu\alpha(x) r-2\epsilon\abs{\nu - \mu}r}\, \eta_{\mu,2mr} *
      \chi_{Q_\mu} }^{\frac{q(x)}{r}}
    \\
    &\leq c\, \sum_{\nu \geq 0} \sum_{\mu \geq 0} 2^{-
      \epsilon\abs{\nu - \mu}r}\, \biggabs{ \sum_{Q_\mu \in
        \mathcal{D}_\mu} 2^{\mu\alpha(x) r}
      \abs{\widetilde{s}_{Q_\mu}}^r\, \eta_{\mu,2mr} * \chi_{Q_\mu}
    }^{\frac{q(x)}{r}},
 \end{align*}
 where, for the inequality, we used H{\"o}lder's inequality
in the space with geometrically decaying weight, as in the proof
of Theorem~\ref{thm:eta}. Now the only part which depends on $\nu$ is a
geometric sum, which we estimate by a constant. Next we change
the power $\alpha(x)$ to $\alpha(y)$ by Lemma~\ref{lem:beta_eta}:
 \begin{align*}
   \lefteqn{\biggnorm{ \sum_{\mu \geq 0} \sum_{Q_\mu \in
         \mathcal{D}_\mu} \abs{\widetilde{s}_{Q_\mu}}^r\,
       2^{\mu\alpha(x) r-2\epsilon\abs{\nu - \mu}r}\, \eta_{\mu,2mr} *
       \chi_{Q_\mu}}_{\lqrxnu}^{\frac{q(x)}{r}} } \hspace{5mm} &
   \\
   &\leq c\, \sum_{\mu \geq 0} \biggabs{ \sum_{Q_\mu \in
       \mathcal{D}_\mu} 2^{\mu\alpha(x) r}
     \abs{\widetilde{s}_{Q_\mu}}^r\, \eta_{\mu,2mr} * \chi_{Q_\mu}
   }^{\frac{q(x)}{r}}
   \\
   &\leq c\, \biggnorm{ \eta_{\mu,m} \ast \bigg(\sum_{Q_\mu \in
       \mathcal{D}_\mu} 2^{\mu\alpha(\cdot) r}
     \abs{\widetilde{s}_{Q_\mu}}^r\, \chi_{Q_\mu} \bigg)
   }_{\lqrxmu}^{\frac{q(x)}{r}}.
  \end{align*}
  Hence, we have shown that
  \begin{align*}
    \norm{f}_{\Fspq{\alpha(\cdot)}{p(\cdot)}{q(\cdot)}(\Omega)} &\leq c\,
    \Biggnorm{ \biggnorm{ \eta_{\mu,m} \ast \bigg(\sum_{Q \in
          \mathcal{D}_\mu} 2^{\mu\alpha(\cdot) r}
        \abs{\widetilde{s}_Q}^r\, \chi_Q \bigg) }_{\lqrxmu}
    }_{\Lprdotx(\Omega)}^{\frac{1}{r}}.
  \end{align*}
  Therefore, by Theorem~\ref{thm:eta}, we conclude that
  \begin{align*}
    \norm{f}_{\Fspq{\alpha(\cdot)}{p(\cdot)}{q}(\Omega)} &\leq c\, \Biggnorm{
      \biggnorm{ \sum_{Q \in \mathcal{D}_\mu} 2^{\mu\alpha(x) r}
        \abs{\widetilde{s}_Q}^r\, \chi_Q }_{\lqrxmu}
    }_{\Lprdotx(\Omega)}^{\frac{1}{r}}
    \\
    &= c\, \Biggnorm{ \biggnorm{ \sum_{Q \in \mathcal{D}_\mu}
        2^{\mu\alpha(x)} \abs{s_Q} \, \abs{Q}^{-\frac{1}{2}}\, \chi_Q
      }_{\lqxmu} }_{\Lpdotx(\Omega)} =
    \norm{\set{s_Q}_Q}_{\fspq{\alpha(\cdot)}{p(\cdot)}{q(\cdot)}(\Omega)},
  \end{align*}
  where we used that the sum consists of a single non-zero
  term.
\end{proof}

\begin{proof}[Proof of Theorem~\ref{thm:Sphi_inv}]
We will reduce the claim to the previous lemma.

By assumption there exists $\epsilon>0$ so that
the molecules $m_Q$ are $(N + 4\epsilon,\alpha + 1 + 3\epsilon)$-smooth.
By the uniform continuity of $p$, $q$ and $\alpha$,
we may choose $\mu_0\ge 0$ so that
$N_{Q}^- > J_{Q}^+ - \alpha_Q^- - n + \epsilon$ and
$\alpha_{Q}^- > \alpha_{Q}^+ - \epsilon$ for every
dyadic cube $Q$ of level $\mu_0$.
Note that if $Q_0$ is a dyadic cube of level $\mu_0$ and
$Q\subset Q_0$ is another dyadic cube, then
\[
N_{Q}^- \ge N_{Q_0}^- > J_{Q_0}^+ - \alpha_{Q_0}^- - n - \epsilon \ge
J_{Q}^+ - \alpha_Q^- - n  - \epsilon,
\]
similarly for $\alpha$. Thus we conclude that
$m_Q$ is a $(J_{Q}^+ - \alpha_Q^- - n  + 3\epsilon,\alpha_Q^+ + 1 + 2\epsilon)$-smooth
when $Q$ is of level at most $\mu_0$.

Since $p$, $q$ and $\alpha$ have a limit at infinity, we conclude that
$N_{\Rn\setminus K}^- > J_{\Rn\setminus K}^+ - \alpha_{\Rn\setminus K}^- - n +  - \epsilon$ and
$\alpha_{\Rn\setminus K}^- > \alpha_{\Rn\setminus K}^+ - \epsilon$
for some compact set $K\subset\Rn$.
We denote by $\Omega_i$, $i=1,\ldots,M$, those dyadic cubes
of level $\mu_0$ which intersect $K$, and define
$\Omega_0=\Rn \setminus \bigcup_{i=1}^M \Omega_i$.

For every integer $i\in [0,M]$ choose $r_i \in
(0,\min\set{1,p_{\Omega_i}^-,q_{\Omega_i}^-})$ so that
$\frac{n}{r_i} < J_{Q}^+ +\epsilon$, and set
$k_i := \frac{n}{r_i}  - n - \alpha_{\Omega_i}^- + 2\epsilon$
and $K_i = \alpha_{\Omega_i}^+ + 2\epsilon$.
Then $m_Q$ is a $(k_i, K_i + 1 )$-smooth molecule
when $Q$ is of level at most $\mu_0$. Define
$k_i(\nu,\mu) := K_i\, (\nu - \mu)_+ +k_i\,(\mu-\nu)_+$ and
$\widetilde{s}_{Q_\mu} := s_{Q_\mu}\, \abs{Q_\mu}^{-1/2}$.
Finally, let $r \in (0,\min\set{1,p^-,q^-})$.

Note that the constants $k_i$ and $K_i$ have been chosen so
that in each set $\Omega_i$ we may argue as in the previous
lemma. Thus we get
  \begin{align*}
    \abs{\phi_\nu \ast m_{Q_\mu}(x)}
    &\le c 2^{-k(\nu,\mu)} \abs{Q_\mu}^{-1/2} (\eta_{\nu,2m} \ast
    \eta_{\mu,2m} \ast \chi_{Q_\mu})(x).
  \end{align*}
From this we conclude that
  \begin{align*}
    \begin{split}
      \norm{f}_{\Fspq{\alpha(\cdot)}{p(\cdot)}{q(\cdot)}}
      & \le
      \Bignorm{ \bignorm{2^{\nu \alpha} \phi_\nu \ast f}_{\lqxnu}
      }_{\Lpdotx}
      \\
      &\leq \Biggnorm{ \biggnorm{ \sum_{\mu = 0}^{\mu_0-1} \sum_{Q_\mu \in
            \mathcal{D}_\mu} \abs{\widetilde{s}_{Q_\mu}}\,
          2^{\nu\alpha(x)-k(\nu,\mu)} \eta_{\nu,2m} \ast \eta_{\mu,2m} \ast
          \chi_{Q_\mu}}_{\lqxnu} }_{\Lpdotx}
      \\
      & \qquad + \sum_{i=0}^M
      \Biggnorm{ \biggnorm{ \sum_{\mu \geq \mu_0-1} \sum_{Q_\mu \in
            \mathcal{D}_\mu} \abs{\widetilde{s}_{Q_\mu}}\,
          2^{\nu\alpha(x)-k(\nu,\mu)} \eta_{\nu,2m} \ast \eta_{\mu,2m} \ast
          \chi_{Q_\mu}}_{\lqxnu} }_{\Lpdotx(Q_i)}.
    \end{split}
  \end{align*}
  By the previous lemma, each term in the last sum is dominated by
  $\norm{\set{s_Q}_Q}_{\fspq{\alpha(\cdot)}{p(\cdot)}{q(\cdot)}}$, so
  we conclude that
  \begin{align*}
    \norm{f}_{\Fspq{\alpha(\cdot)}{p(\cdot)}{q(\cdot)}}
    &\leq \Biggnorm{ \biggnorm{ \sum_{\mu = 0}^{\mu_0-1} \sum_{Q_\mu \in
          \mathcal{D}_\mu} \abs{\widetilde{s}_{Q_\mu}}\,
        2^{\nu\alpha(x)-k(\nu,\mu)} \eta_{\nu,2m} \ast \eta_{\mu,2m} \ast
        \chi_{Q_\mu}}_{\lqxnu} }_{\Lpdotx} \\
    &\qquad + c (M+1) \norm{\set{s_Q}_Q}_{\fspq{\alpha(\cdot)}{p(\cdot)}{q(\cdot)}}.
  \end{align*}

  It remains only to take care of the first term on the right hand
  side.  An analysis of the proof of the previous lemma shows that the
  only part where the assumption on the smoothness of the molecules
  was needed was in the estimate \eqref{muNuAssumption}.  In the
  current case we get instead
  \begin{align*}
    & 2^{\nu\alpha(x)r -r k(\nu, \mu)}
    (\eta_{\nu,2mr} * \eta_{\mu,2mr} *
    \chi_{Q_\mu})^r
    \\
    & \qquad\leq c\,2^{\mu\alpha(x)r -2\epsilon\abs{\nu - \mu} +
      n(1-r)_+ (\mu-\nu)_+} \eta_{\nu,2mr} * \eta_{\mu,2mr} *
    \chi_{Q_\mu},
  \end{align*}
  since we have no control of $k_2$. However, since $\mu\le \mu_0$ and
  $\nu\ge 0$, the extra term satisfies $2^{n(1-r)_+ (\mu-\nu)_+} \le
  2^{n(1-r)_+\mu_0}$, so it is just a constant. After this modification
  the rest of the proof of Lemma~\ref{lem:Sphi_inv} takes care of the
  first term.
\end{proof}

\begin{proof}[Proof of Theorem~\ref{thm:atom_Fspxq}]
Define constants $K = n/\min\{1,p^-,q^-\} - n + \epsilon$ and $L = \alpha^+ + 1 + \epsilon$.
We construct $(K,L)$-smooth atoms $\{a_Q\}_{Q\in \mathcal{D}^+}$
exactly as on p.~132 of \cite{FraJa}.
Note that we may use the constant indices construction,
since the constants $K$ and $L$ give sufficient
smoothness at every point. These atoms are also atoms
for the space $\Fspq{\alpha(\cdot)}{p(\cdot)}{q(\cdot)}$.

Let $f\in \Fspq{\alpha(\cdot)}{p(\cdot)}{q(\cdot)}$.
With functions as in Definition~\ref{phiDef}, we represent
$f$ as $\displaystyle f = \sum_{Q\in \mathcal{D}^+} t_Q \phi_Q$,
where $t_Q = \langle f,\psi_Q\rangle$. Next, we define
\[
(t_r^*)_{Q_{\nu k}} = \left(\sum_{P\in \mathcal{D}_\nu}
\frac{|t_P|^r}{(1+2^{\nu}|x_P-x_Q|)^m} \right)^{1/r},
\]
for $Q = Q_{\nu k}$, $\nu \in \mathbb{N}^0$ and $k \in \mathbb{Z}^n$.
For there numbers $(t_r^*)_Q$ we know that $f = \sum_Q (t_r^*)_Q a_Q$
where $\{a_Q\}_Q$ are atoms (molecules with support in $3Q$),
by the construction of \cite{FraJa}. (Technically, the atoms from
the construction of \cite{FraJa} satisfy our inequalities for molecules
only up to a constant (independent of the cube and scale). We will
ignore this detail.)

For $\nu \in \mathbb{N}_0$ define $T_\nu := \sum_{Q\in \mathcal{D}_\nu}
t_Q\, \chi_Q$.
The definition of $t_r^*$ is a discrete convolution of $T_\nu$ with
$\eta_{\nu,m}$. Changing to the continuous version, we see that
$(t_r^*)_{Q_{\nu k}} \approx \big(\eta_{\nu,M} \ast (|T_\nu|^r) (x)\big)^{1/r}$
for $x\in Q_{\nu k}$. By this point-wise estimate we conclude that
\begin{align*}
\| t_r^* \|_{\fspq{\alpha(\cdot)}{p(\cdot)}{q(\cdot)}}
&=
\bigg\| \Big \|
\Big\{ 2^{\nu \alpha(x)} \sum_{Q\in \mathcal{D}_\nu}
|Q|^{-\frac12}\, (t_r^*)_Q\, \chi_Q\Big\}_\nu
\Big\|_{l^{q(x)}_\nu}\bigg\|_{L^{p(\cdot)}_x} \\
&\approx
\bigg \| \Big\|
\big\{ 2^{\nu \alpha(x)+\nu/2} \eta_{\nu,M}\ast (|T_\nu|^r)\Big\}_\nu
\big\|_{l^\frac{q(x)}r_\nu} \bigg\|_{L^\frac{p(\cdot)}r_x}^\frac1r.
\end{align*}
Next we use Lemma~\ref{lem:beta_eta} and Theorem~\ref{thm:eta}
to conclude that
\begin{align*}
\bigg \| \big\| &
\big\{ 2^{\nu \alpha(x)+\nu/2} \eta_{\nu,M}\ast (|T_\nu|^r)\big\}_\nu
\big\|_{l^\frac{q(x)}r_\nu} \bigg\|_{L^\frac{p(\cdot)}r_x}^\frac1r \\
&\le c\, \bigg \| \Big\| \big\{ 2^{\nu\alpha(x)+\nu/2}
T_\nu\big\}_\nu \Big\|_{l^{q(x)}_\nu} \bigg\|_{L^{p(\cdot)}_x} =
\bigg \| \Big\| \Big\{ 2^{\nu \alpha(x)} \sum_{Q\in \mathcal{D}_\nu}
|Q|^{-\frac12}\, t_Q\, \chi_Q\Big\}_\nu
\Big\|_{l^{q(x)}_\nu}\bigg\|_{L^{p(\cdot)}_x}.
\end{align*}
Since $f= \sum_{Q\in \mathcal{D}^+} t_Q \phi_Q$, Theorem~\ref{thm:Sphi_bnd}
implies that this is bounded by a constant times
$\| f \|_{\Fspq{\alpha(\cdot)}{p(\cdot)}{q(\cdot)}}$.

This completes one direction. The other direction,
$$
\| f \|_{\Fspq{\alpha(\cdot)}{p(\cdot)}{q(\cdot)}} \le c\,
\| \{s_Q\}_Q \|_{\fspq{\alpha(\cdot)}{p(\cdot)}{q(\cdot)}},
$$
follows from Theorem \ref{thm:Sphi_inv},
since every family of atoms is in particular a family of molecules.
\end{proof}

We next consider a general embedding lemma. The local classical
scale of Triebel--Lizorkin spaces is increasing in the primary index
$p$ and decreasing in the secondary index $q$. This is a direct
consequence of the corresponding properties of $L^p$ and $l^q$. In
the variable exponent setting we have the following global result
provided we assume that $p$ stays constant at infinity:

\begin{proposition}
  \label{pro:embedding}
  Let $p_j$, $q_j$, and $\alpha_j$ be as in the Standing Assumptions, $j=0,1$.
  \begin{enumerate}[label={\rm (\alph{*})}]
  \item \label{item:embedding1} If $p_0 \geq p_1$ and $(p_0)_\infty =
    (p_1)_\infty$, then $L^{p_0(\cdot)} \hookrightarrow
    L^{p_1(\cdot)}$.
  \item \label{item:embedding2} If $\alpha_0 \geq \alpha_1$, $p_0 \geq p_1$,
    $(p_0)_\infty = (p_1)_\infty$, and $q_0 \leq q_1$, then
    $\Fspq{\alpha_0(\cdot)}{p_0(\cdot)}{q_0(\cdot)} \hookrightarrow
    \Fspq{\alpha_1(\cdot)}{p_1(\cdot)}{q_1(\cdot)}$.
  \end{enumerate}
\end{proposition}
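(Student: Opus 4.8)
The plan is to prove part~\ref{item:embedding1} directly via the standard ``$1\in L^{r(\cdot)}$'' device, and then deduce part~\ref{item:embedding2} from it by a soft pointwise monotonicity argument. For~\ref{item:embedding1} I would set $\frac{1}{r(x)} := \frac{1}{p_1(x)} - \frac{1}{p_0(x)} \geq 0$ (with the convention $r(x)=\infty$ where this vanishes). Since $\frac1{p_0},\frac1{p_1}\in C^{\ln}(\Rn)$ the function $\frac1r$ is again globally $\log$-H\"older, and its limit at infinity is $\big(\tfrac1{p_1}\big)_\infty-\big(\tfrac1{p_0}\big)_\infty=0$ by the hypothesis $(p_0)_\infty=(p_1)_\infty$; hence the global $\log$-H\"older estimate gives $\frac{1}{r(x)}\leq \frac{c}{\ln(e+|x|)}$ for all $x$. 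Consequently, for $\lambda>1$ we have $\lambda^{-r(x)}\leq (e+|x|)^{-(\ln\lambda)/c}$ (and $=0$ where $r=\infty$), which is integrable over $\Rn$ once $\ln\lambda>cn$, so choosing $\lambda$ large enough that the modular of $1/\lambda$ is at most $1$ shows $\norm{1}_{L^{r(\cdot)}(\Rn)}<\infty$. The generalized H\"older inequality in variable exponent Lebesgue spaces --- which follows from the version in~\cite{KR} by raising to a suitable power, since $\tfrac1{p_1}=\tfrac1{p_0}+\tfrac1r$ --- then yields
\[
  \norm{f}_{L^{p_1(\cdot)}} = \norm{f\cdot 1}_{L^{p_1(\cdot)}} \leq c\,\norm{f}_{L^{p_0(\cdot)}}\,\norm{1}_{L^{r(\cdot)}} = C\,\norm{f}_{L^{p_0(\cdot)}},
\]
which is exactly the asserted embedding; note that no lower bound on $p_0,p_1$ is needed.

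For part~\ref{item:embedding2} I would fix $f\in\Fspq{\alpha_0(\cdot)}{p_0(\cdot)}{q_0(\cdot)}$ and argue pointwise in $x$. Since $\nu\geq 0$ and $\alpha_1(x)\leq\alpha_0(x)$ we have $2^{\nu\alpha_1(x)}\leq 2^{\nu\alpha_0(x)}$, and since $q_0(x)\leq q_1(x)$ the discrete Lebesgue quasi-norm is decreasing in the exponent, so
\[
  \bignorm{2^{\nu\alpha_1(x)}\,\phi_\nu\ast f(x)}_{l^{q_1(x)}_\nu}
  \leq \bignorm{2^{\nu\alpha_1(x)}\,\phi_\nu\ast f(x)}_{l^{q_0(x)}_\nu}
  \leq \bignorm{2^{\nu\alpha_0(x)}\,\phi_\nu\ast f(x)}_{l^{q_0(x)}_\nu}=:g(x).
\]
Thus the function on the left is nonnegative and dominated pointwise by $g$, and by the lattice property of the Luxemburg functional of $L^{p_1(\cdot)}$ followed by part~\ref{item:embedding1} (applicable since $p_0\geq p_1$ and $(p_0)_\infty=(p_1)_\infty$) we get
\[
  \norm{f}_{\Fspq{\alpha_1(\cdot)}{p_1(\cdot)}{q_1(\cdot)}} \leq \norm{g}_{L^{p_1(\cdot)}} \leq C\,\norm{g}_{L^{p_0(\cdot)}} = C\,\norm{f}_{\Fspq{\alpha_0(\cdot)}{p_0(\cdot)}{q_0(\cdot)}},
\]
as desired.

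I expect the only genuinely delicate point to be the finiteness $\norm{1}_{L^{r(\cdot)}(\Rn)}<\infty$ in part~\ref{item:embedding1}: this is precisely where the hypothesis that $p$ is constant at infinity is used, and it succeeds only because global $\log$-H\"older continuity forces $r(x)$ to grow at least logarithmically in $|x|$, which is the borderline rate needed to integrate $\lambda^{-r(x)}$ over all of $\Rn$ --- without such a condition the embedding can genuinely fail, as is well known for variable exponent spaces on sets of infinite measure. Once~\ref{item:embedding1} is in place, part~\ref{item:embedding2} is a routine consequence of the monotonicity of $l^q$ in $q$, of $2^{\nu\alpha(x)}$ in $\alpha$ (using $\nu\geq 0$), and of $L^{p(\cdot)}$ in $p$.
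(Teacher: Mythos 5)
Your proof is correct and follows essentially the same route as the paper: part~\ref{item:embedding1} hinges on verifying $1\in L^{r(\cdot)}(\Rn)$ for $\tfrac1r=\tfrac1{p_1}-\tfrac1{p_0}$ via the global $\log$-H\"older decay $\tfrac1{r(x)}\le c/\ln(e+\abs{x})$ coming from $(p_0)_\infty=(p_1)_\infty$, which is exactly the paper's computation (the paper then cites the characterization of $L^{p_0(\cdot)}\hookrightarrow L^{p_1(\cdot)}$ from \cite{D2}, whereas you conclude directly by the generalized H\"older inequality --- an immaterial difference). Part~\ref{item:embedding2} is the same pointwise monotonicity argument in $\alpha$, $q$ and $p$ as in the paper.
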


\begin{proof}
  In Lemma~2.2 of \cite{D2} it is shown that $L^{p_0(\cdot)}(\R^n) \hookrightarrow
  L^{p_1(\cdot)}(\R^n)$ if and only if $p_0 \geq p_1$ almost
  everywhere and $1 \in L^{r(\cdot)}(\R^n)$, where $\frac{1}{r(x)}
  := \frac{1}{p_1(x)} - \frac{1}{p_0(x)}$. Note that $r(x) = \infty$
  if $p_1(x) = p_0(x)$. The condition $1 \in L^{r(\cdot)}(\R^n)$
  means in this context (since $r$ is usually unbounded) that
  $\lim_{\lambda \searrow 0} \varrho_{r(\cdot)}(\lambda) =0$, where we
  use the convention that $\lambda^{r(x)} = 0$ if $r(x)=\infty$ and
  $\lambda \in [0,1)$. Due to the assumptions on $p_0$ and $p_1$, we
  have $\frac{1}{r} \in C^{\ln}$, $\frac{1}{r} \geq 0$, and
  $\frac{1}{r_\infty} = 0$. In particular, $\abs{\frac{1}{r(x)}}\leq
  \frac{A}{\ln(e+\abs{x})}$ for some $A>0$ and all $x \in \R^n$.
  Thus,
  \begin{align*}
    \varrho_{r(\cdot)}(\exp(-2nA)) &= \int_{\R^n} \exp \Bigg(
    \frac{-2nA}{\abs{\frac{1}{r(x)}}} \Bigg) \,dx \leq \int_{\R^n}
    (e + \abs{x})^{-2n}\,dx < \infty.
  \end{align*}
  The convexity of $\varrho_{r(\cdot)}$ implies that
  $\varrho_{r(\cdot)}(\lambda\, \exp(-2nA)) \to 0$ as $\lambda
  \searrow 0$ and \ref{item:embedding1} follows.

  For \ref{item:embedding2} we argue as follows.
Since $\alpha_0 \geq \alpha_1$, we have
  $2^{\nu \alpha_0(x)} \leq 2^{\nu \alpha_1(x)}$ for all $\nu \geq 0$
  and all $x \in \R^n$.  Moreover, $q_0 \leq q_1$ implies
  $\norm{\cdot}_{l^{q_1}} \leq \norm{\cdot}_{l^{q_0}}$
  and~\ref{item:embedding1} implies $L^{p_0(\cdot)}(\R^n)
  \hookrightarrow L^{p_1(\cdot)}(\R^n)$. Now, the claim follows
  immediately from the definitions of the norms of
  $\Fspq{\alpha_0(\cdot)}{p_0(\cdot)}{q_0(\cdot)}$ and
  $\Fspq{\alpha_1(\cdot)}{p_1(\cdot)}{q_1(\cdot)}$.
\end{proof}

With the help of this embedding result we can prove
the density of smooth functions.

\begin{proof}[Proof of Corollary~\ref{cor:density}]
Choose $K$ so large that $\Fspq{K}{p^+}{2} \hookrightarrow
\Fspq{\alpha^+}{p^+}{1}$. This is possible by classical,
fixed exponent, embedding results.

Let $f\in \Fspq{\alpha(\cdot)}{p(\cdot)}{q(\cdot)}$ and choose
smooth atoms $a_Q\in C^k(\Rn)$
so that $f=\sum_{Q \in \mathcal{D}^+} t_Q a_Q$
in $\mathcal{S}'$. Define
\begin{align*}
f_m = \sum_{\nu=0}^m
\sum_{Q \in \mathcal{D_\nu}, |x_Q|<m} t_Q a_Q.
\end{align*}
Then clearly $f_m \in C_0^K$ and $f_m \to f$ in
$\Fspq{\alpha(\cdot)}{p(\cdot)}{q(\cdot)}$.

We can chose a sequence of functions $\phi_{m,k}\in C^\infty_0$
so that $\| f_m -\phi_{m,k}\|_{W^{K,p^+}} \to 0$ as $k\to\infty$ and
the support of $\phi_{k,m}$ is lies in the ball $B(0,r_m)$.
By the choice of $K$ we conclude that
\[
\| f_m -\phi_{m,k}\|_{\Fspq{\alpha^+}{p^+}{1}}\le
c\| f_m -\phi_{m,k}\|_{\Fspq{K}{p^+}{2}}= c \| f_m -\phi_{m,k}\|_{W^{K,p^+}}.
\]
By Proposition~\ref{pro:embedding} we conclude that
\[
\| f_m -\phi_{m,k}\|_{\Fspq{\alpha(\cdot)}{p(\cdot)}{q(\cdot)}}
\le c \| f_m -\phi_{m,k}\|_{\Fspq{\alpha^+}{p^+}{1}}.
\]
Note that the assumption $(p_0)_\infty = (p_1)_\infty$
of the proposition is irrelevant, since our functions have bounded support.
Combining these inequalities yields that
$\phi_{m,k}\to f_m$ in $\Fspq{\alpha(\cdot)}{p(\cdot)}{q(\cdot)}$,
hence we may chose a sequence $k_m$ so that $\phi_{m,k_m} \to f$ in
$\Fspq{\alpha(\cdot)}{p(\cdot)}{q(\cdot)}$, as required.
\end{proof}

\begin{remark}
Note that we used density of smooth functions in the proof of the
equality $\Fspq{k}{p(\cdot)}{2} \cong W^{k,p(\cdot)}$.
However, in the proof of the previous corollary we needed
this result only for constant exponent:
$\Fspq{k}{p}{2} \cong W^{k,p}$. Therefore, the argument is
not circular.
\end{remark}


\section{Traces}
\label{sect:traces}

In this section we deal with trace theorems for Triebel--Lizorkin spaces.
We write $\mathcal{D}^n$ and $\mathcal{D}_\nu^n$ for the families of
dyadic cubes in $\mathcal{D}^+$
when we want to emphasize the dimension of the underlying space.
The idea of the proof of the main trace theorem is to use the
localization afforded by the atomic decomposition, and express a
function as a sum of only those atoms with support intersecting the hyperplane
$\R^{n-1}\subset\Rn$. In the classical case, this approach is due to
Frazier and Jawerth \cite{FJ1}.

There have been other approaches to deal with traces and extension
operators using wavelet decomposition instead of atomic
decomposition, which utilizes compactly supported Daubechies
wavelets, and thus, conveniently gives trace theorems (see, e.g.,
\cite{FraR_pp07blms}). However, for that one would need to define
and establish properties of almost diagonal operators and  almost
diagonal matrices for the $\Fspq{\alpha(\cdot)}{p(\cdot)}{q(\cdot)}$
and $\fspq{\alpha(\cdot)}{p(\cdot)}{q(\cdot)}$ spaces. In the
interest of brevity we leave this for future research.

The following lemma shows that it does not matter much for the norm
if we shift around the mass a bit in the sequence space.

\begin{lemma}
\label{lem:E_Q}
  Let $p$, $q$, and $\alpha$ be as in the Standing Assumptions,
  $\epsilon>0$, and let $\set{E_Q}_Q$ be a collection of sets with
  $E_Q \subset 3Q$ and $\abs{E_Q} \geq \epsilon\, \abs{Q}$. Then
  \begin{align*}
    \bignorm{ \set{s_Q}_Q}_{\fspq{\alpha(\cdot)}{p(\cdot)}{q(\cdot)}}
    &\approx \Biggnorm{ \biggnorm{ 2^{\nu\alpha(x)} \sum_{Q \in
          \mathcal{D}_\nu} \abs{s_Q}\, \abs{Q}^{-\frac{1}{2}}\,
        \chi_{E_Q} }_{\lqxnu} }_{\Lpdotx}
  \end{align*}
for all $\set{s_Q}_Q \in \fspq{\alpha(\cdot)}{p(\cdot)}{q(\cdot)}$.
\end{lemma}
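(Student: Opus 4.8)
The plan is to sandwich both sides of the claimed equivalence between a common ``$\eta$-smoothed'' quantity, using two elementary pointwise bounds valid \emph{at a fixed dyadic scale}. Fix $m$ large (to be pinned down), $\nu\in\N_0$ and $Q\in\mathcal{D}_\nu$. Since $E_Q\subset 3Q$ we have $\abs{y-x_Q}\le c\,2^{-\nu}$ for $y\in E_Q$, hence $1+2^\nu\abs{x-y}\approx 1+2^\nu\abs{x-x_Q}$ and therefore $\eta_{\nu,2m}\ast\chi_{E_Q}(x)\approx\abs{E_Q}\,\eta_{\nu,2m}(x-x_Q)$; as $\epsilon\abs{Q}\le\abs{E_Q}\le 3^n\abs{Q}$, this is comparable, with constants depending only on $\epsilon,m,n$, to $\eta_{\nu,2m}\ast\chi_{Q}(x)$. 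In particular
\[
\chi_Q(x)\le c\,\eta_{\nu,2m}\ast\chi_{E_Q}(x)
\qquad\text{and}\qquad
\chi_{E_Q}(x)\le\chi_{3Q}(x)\le c\,\eta_{\nu,2m}\ast\chi_Q(x)
\]
for all $x\in\Rn$. Writing $c_Q:=\abs{s_Q}\,\abs{Q}^{-1/2}$, $g_\nu:=\sum_{Q\in\mathcal{D}_\nu}c_Q\chi_{E_Q}$, $h_\nu:=\sum_{Q\in\mathcal{D}_\nu}c_Q\chi_Q$, and summing over $Q\in\mathcal{D}_\nu$, the linearity of the convolution turns these into $2^{\nu\alpha(x)}h_\nu(x)\le c\,2^{\nu\alpha(x)}\,\eta_{\nu,2m}\ast g_\nu(x)$ and $2^{\nu\alpha(x)}g_\nu(x)\le c\,2^{\nu\alpha(x)}\,\eta_{\nu,2m}\ast h_\nu(x)$.

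Because the Standing Assumptions permit $p^-,q^-<1$, I would then run the ``$r$-trick'' exactly as in the proof of Theorem~\ref{thm:Sphi_bnd}. Fix $r\in(0,\min\set{1,p^-,q^-})$ and take $m$ so large that Lemma~\ref{lem:beta_eta} applies to the exponent $r\alpha$ with the decay parameters $2mr,mr$, and $mr>n$. The two displayed bounds are treated identically; consider the first. Using $\norm{v_\nu}_{\lqxnu}=\norm{v_\nu^{\,r}}_{\lqrxnu}^{1/r}$, $\norm{w}_{\Lpdotx}=\norm{w^r}_{\Lprdotx}^{1/r}$, the subadditivity of $t\mapsto t^r$, and the single-scale case of Lemma~\ref{lem:etapower} — equivalently the fact that $\eta_{\nu,2m}\ast\chi_{E_Q}\approx\abs{E_Q}\eta_{\nu,2m}(\cdot-x_Q)$ and $\abs{E_Q}\approx\abs{Q}$, so $(\eta_{\nu,2m}\ast\chi_{E_Q})^r\le c\,\eta_{\nu,2mr}\ast\chi_{E_Q}$ with \emph{no} scaling factor since the convolved function lives at the same level $\nu$ — one obtains $(\eta_{\nu,2m}\ast g_\nu)^r\le c\,\eta_{\nu,2mr}\ast G_\nu$, where $G_\nu:=\sum_{Q\in\mathcal{D}_\nu}c_Q^{\,r}\chi_{E_Q}$. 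Next Lemma~\ref{lem:beta_eta} (for $r\alpha$) absorbs the weight, $2^{\nu\alpha(x)r}\,\eta_{\nu,2mr}\ast G_\nu(x)\le c\,\eta_{\nu,mr}\ast\bigl(2^{\nu\alpha(\cdot)r}G_\nu\bigr)(x)$, and Theorem~\ref{thm:eta}, applied to the exponents $p/r,q/r$ (which lie in $C^{\ln}$ and satisfy $1<(p/r)^-\le(p/r)^+<\infty$, likewise for $q/r$) with decay $mr>n$, removes the convolution. Undoing the power $r$, the first displayed bound yields $\norm{\set{s_Q}_Q}_{\fspq{\alpha(\cdot)}{p(\cdot)}{q(\cdot)}}\le c\,\Bignorm{\bignorm{2^{\nu\alpha(x)r}G_\nu}_{\lqrxnu}}_{\Lprdotx}^{1/r}$, where on the left we used that the $Q\in\mathcal{D}_\nu$ are pairwise disjoint, so $\sum_{Q}c_Q^{\,r}\chi_Q=\bigl(\sum_Q c_Q\chi_Q\bigr)^r$ exactly.

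Finally, at each scale the dilates $\set{3Q:Q\in\mathcal{D}_\nu}$, hence the sets $\set{E_Q:Q\in\mathcal{D}_\nu}$, have overlap bounded by a dimensional constant $C(n)$, so $\sum_{Q\in\mathcal{D}_\nu}c_Q^{\,r}\chi_{E_Q}(x)\le C(n)^{1-r}\bigl(\sum_{Q\in\mathcal{D}_\nu}c_Q\chi_{E_Q}(x)\bigr)^r$; inserting this into the bound just obtained gives $\norm{\set{s_Q}_Q}_{\fspq{\alpha(\cdot)}{p(\cdot)}{q(\cdot)}}\le c\,\Bignorm{\bignorm{2^{\nu\alpha(x)}\sum_{Q\in\mathcal{D}_\nu}\abs{s_Q}\abs{Q}^{-1/2}\chi_{E_Q}}_{\lqxnu}}_{\Lpdotx}$, which is the upper bound of the equivalence. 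The reverse inequality is obtained by feeding the second displayed bound, $2^{\nu\alpha(x)}g_\nu\le c\,2^{\nu\alpha(x)}\eta_{\nu,2m}\ast h_\nu$, into the identical chain; here no overlap constant is needed because $\mathcal{D}_\nu$ consists of disjoint cubes.

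I do not expect a genuine obstacle: this is a recombination of machinery already developed in the paper (the $r$-trick from Theorem~\ref{thm:Sphi_bnd}, Lemma~\ref{lem:beta_eta}, Theorem~\ref{thm:eta}). The two points deserving care are (i) checking that the scaling factor in Lemma~\ref{lem:etapower} is trivial here because $\chi_{E_Q}$ and $\eta_{\nu,2m}$ sit at the same dyadic level, and (ii) verifying the hypotheses of Theorem~\ref{thm:eta} after passing to $p/r,q/r$; both are immediate.
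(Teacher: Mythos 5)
Your proof is correct and follows essentially the same route as the paper's: the two pointwise bounds $\chi_Q\le c\,\eta_{\nu,m}\ast\chi_{E_Q}$ and $\chi_{E_Q}\le c\,\eta_{\nu,m}\ast\chi_Q$, followed by the $r$-trick, Lemma~\ref{lem:beta_eta}, and Theorem~\ref{thm:eta}. Your explicit treatment of the bounded overlap of the sets $E_Q$ at a fixed scale is a detail the paper's write-up glosses over, but it is handled correctly and changes nothing essential.
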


\begin{proof}
  We start by proving the inequality ``$\leq$''.
  Let $r \in (0, \min \set{p^-, q^-})$. We express the norm as
  \begin{align*}
    \bignorm{ \set{s_Q}_Q}_{\fspq{\alpha(\cdot)}{p(\cdot)}{q(\cdot)}}
    &= \Biggnorm{ \biggnorm{ 2^{\nu\alpha(x)r} \sum_{Q \in
          \mathcal{D}_\nu} \abs{s_Q}^r\, \abs{Q}^{-\frac{r}{2}}\,
        \chi_Q }_{\lqrxnu} }_{\Lprdotx}^{\frac{1}{r}},
  \end{align*}
  since the sum has only one non-zero term.  We use the estimate
  $\chi_{Q} \leq c\, \eta_{\nu,m} \ast \chi_{E_Q}$ for all $Q \in
  \mathcal{D}_\nu$. Now Lemma~\ref{lem:beta_eta} implies that
  \begin{align*}
    \bignorm{ \set{s_Q}_Q}_{\fspq{\alpha(\cdot)}{p(\cdot)}{q(\cdot)}}
    &\leq c\,\Biggnorm{ \biggnorm{ 2^{\nu\alpha(x)r} \sum_{Q \in
          \mathcal{D}_\nu} \abs{s_Q}^r\, \abs{Q}^{-\frac{r}{2}}\,
        \eta_\nu \ast \chi_{E_Q} }_{\lqrxnu}
    }_{\Lprdotx}^{\frac{1}{r}}
    \\
    &\leq c\,\Biggnorm{ \biggnorm{ \eta_\nu \ast \Big(
        2^{\nu\alpha(\cdot)r} \sum_{Q \in \mathcal{D}_\nu}
        \abs{s_Q}^r\, \abs{Q}^{-\frac{r}{2}}\, \chi_{E_Q} \Big)
      }_{\lqrxnu} }_{\Lprdotx}^{\frac{1}{r}}.
  \end{align*}
  Then Theorem~\ref{thm:eta} completes the proof of the first
  direction:
  \begin{align*}
    \bignorm{
      \set{s_Q}_Q}_{\fspq{\alpha(\cdot)}{p(\cdot)}{q(\cdot)}} &\leq
    c\,\Biggnorm{ \biggnorm{ 2^{\nu\alpha(x)r} \sum_{Q \in
          \mathcal{D}_\nu} \abs{s_Q}^r\, \abs{Q}^{-\frac{r}{2}}\,
        \chi_{E_Q} }_{\lqrxnu} }_{\Lprdotx}^{\frac{1}{r}}
    \\
    &= c\,\Biggnorm{ \biggnorm{ 2^{\nu\alpha(x)} \sum_{Q \in
          \mathcal{D}_\nu} \abs{s_Q}\, \abs{Q}^{-\frac{1}{2}}\,
        \chi_{E_Q} }_{\lqxnu} }_{\Lpdotx}^{\frac{1}{r}} .
  \end{align*}
  The other direction follows by the same argument,
since $\chi_{E_Q} \leq c\, \eta_\nu \ast \chi_{Q}$.
\end{proof}

Next we use the embedding proposition from the previous
section to show that the trace space
does not really depend on the secondary index of integration.

\begin{lemma}\label{lem:trace}
  Let $p_1$, $p_2$, $q_1$, $\alpha_1$ and $\alpha_2$ be as in
  the Standing Assumptions and let $q_2 \in (0, \infty)$.  Assume that
  $\alpha_1=\alpha_2$ and $p_1=p_2$ in the
upper or lower half space, and that $\alpha_1\geq \alpha_2$ and
  $p_1\leq p_2$.  Then
  \begin{align*}
    \trace \Fspq{\alpha_1(\cdot)}{p_1(\cdot)}{q_1(\cdot)}(\R^n) = \trace
    \Fspq{\alpha_2(\cdot)}{p_2(\cdot)}{q_2}(\R^n).
  \end{align*}
\end{lemma}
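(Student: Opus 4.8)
The plan is to reduce the equality of the two trace spaces to the chain of embeddings
\[
\trace \Fspq{\alpha_1(\cdot)}{p_1(\cdot)}{q_1(\cdot)}
\hookrightarrow \trace \Fspq{\alpha_2(\cdot)}{p_2(\cdot)}{q_2}
\hookrightarrow \trace \Fspq{\alpha_1(\cdot)}{p_1(\cdot)}{q_1(\cdot)}
\]
and then observe that each embedding follows from a lifting-type argument on the trace space together with Proposition~\ref{pro:embedding}. The key idea is that although the hypotheses (equality of exponents in one half space, and inequalities $\alpha_1\geq\alpha_2$, $p_1\leq p_2$ globally) are not symmetric, the trace operator only sees a neighborhood of the hyperplane $\R^{n-1}\subset\Rn$. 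Since the atomic decomposition of Theorem~\ref{thm:atom_Fspxq} is local, and a function in the trace space can be extended so that the ``relevant'' atoms are concentrated near $\R^{n-1}$, we can move freely between the half space where the exponents agree and the half space where they merely satisfy the ordering inequalities.

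First I would use Theorem~\ref{thm:atom_Fspxq} to write any $f\in \Fspq{\alpha_1(\cdot)}{p_1(\cdot)}{q_1(\cdot)}$ as $f=\sum_{Q\in\mathcal{D}^+} t_Q a_Q$ with smooth atoms $\supp a_Q\subset 3Q$ and $\norm{\{t_Q\}}_{\fspq{\alpha_1}{p_1}{q_1}}\approx \norm{f}$. Only atoms with $3Q\cap\R^{n-1}\neq\emptyset$ contribute to the trace. Replacing $f$ by the sum of those atoms alone changes neither the trace nor (up to a constant) the norm; moreover one is then free to reflect the construction into whichever half space is convenient, so that without loss of generality the exponents $\alpha_1,p_1$ and $\alpha_2,p_2$ agree on the support of every atom that matters. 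On that support the two Triebel--Lizorkin norms $\fspq{\alpha_1}{p_1}{q_1}$ and $\fspq{\alpha_2}{p_2}{q_2}$ compare by exactly the argument of Proposition~\ref{pro:embedding}\ref{item:embedding2}: the weight satisfies $2^{\nu\alpha_1(x)}\leq 2^{\nu\alpha_2(x)}$ (or the reverse, depending on the direction), the $l^{q}$-norms compare trivially because $q_2$ is constant and one can interpolate $l^{q_2}$ between $l^{q_1^-}$ and $l^{q_1^+}$ using Lemma~\ref{lem:E_Q} to absorb the mismatch, and the $L^{p(\cdot)}$-norms compare by Proposition~\ref{pro:embedding}\ref{item:embedding1}, whose hypothesis on the value at infinity is harmless because the supports are bounded in the variables transverse to— and we may truncate in the variable along—the hyperplane. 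Feeding the resulting sequence back through the inverse part of Theorem~\ref{thm:atom_Fspxq} (every family of atoms is a family of molecules, so Theorem~\ref{thm:Sphi_inv} applies) yields $g=\sum t_Q a_Q \in \Fspq{\alpha_2}{p_2}{q_2}$ with $\trace g=\trace f$, which gives one of the two embeddings; the other is obtained by running the same argument with the roles of the indices reversed, using that the ordering hypotheses only need to hold in the half space not containing the hyperplane.

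The main obstacle I anticipate is handling the secondary index cleanly: the hypotheses put $q_2$ constant but $q_1$ variable with no relation between them, so neither $q_1\leq q_2$ nor $q_1\geq q_2$ holds in general, and one cannot simply quote Proposition~\ref{pro:embedding}. The fix is to note that on the trace space the secondary index genuinely does not matter—this is really the content of the lemma—so one should sandwich $q_2$: pick constants $q^-\le \min\{q_1^-,q_2\}$ and $q^+\ge\max\{q_1^+,q_2\}$, prove $\trace\Fspq{\alpha}{p}{q^+}\hookrightarrow \trace\Fspq{\alpha}{p}{q_2}\hookrightarrow\trace\Fspq{\alpha}{p}{q^-}$ for any fixed smoothness/integrability, and separately prove that the trace space with the smallest such constant secondary index is contained in the one with the largest, by showing directly that the trace of a single atom lies in all these spaces with comparable norm. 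This last step is where the localization of the atoms (each contributing on a single dyadic cube, so the inner $l^q$-sum has one term) makes all the $l^q$ norms literally equal, collapsing the apparent dependence on $q$. Once that ``$q$ is irrelevant for traces'' lemma is in place, the comparison of the remaining indices $\alpha_1$ vs.\ $\alpha_2$ and $p_1$ vs.\ $p_2$ is exactly Proposition~\ref{pro:embedding} applied on the half space where they are free, combined with the identity on the half space where they agree, and the proof concludes.
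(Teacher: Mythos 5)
Your proposal is correct and follows essentially the same route as the paper: sandwich the secondary index between constants $r_0=\min\{q_2,q_1^-\}$ and $r_1=\max\{q_2,q_1^+\}$ via Proposition~\ref{pro:embedding}, then close the loop by taking an atomic decomposition, discarding all atoms whose tripled cubes miss the hyperplane, shifting the mass of the surviving coefficients into the half space where the exponents agree (the sets $E_Q$ of Lemma~\ref{lem:E_Q}), and using the bounded overlap of these sets to make the $\ell^q$-norms comparable. The only slips are cosmetic: you state the trivial and nontrivial directions of the $q$-sandwich in reversed roles at one point, and the hypothesis $(p_1)_\infty=(p_2)_\infty$ needs no truncation argument since both exponents have limits at infinity and agree on a half space.
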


\begin{proof}
We assume without loss of generality
that $\alpha_1=\alpha_2$ and $p_1=p_2$ in the
upper half space.
  We define $r_0 = \min\set{q_2, q_1^-}$ and $r_1 = \max\set{q_2, q_1^+}$.
    It follows from Proposition~\ref{pro:embedding} that
  \begin{align*}
  \trace
  \Fspq{\alpha_2(\cdot)}{p_2(\cdot)}{r_0} \hookrightarrow \trace
  \Fspq{\alpha_2(\cdot)}{p_1(\cdot)}{q_1(\cdot)} \hookrightarrow \trace
  \Fspq{\alpha_1(\cdot)}{p_1(\cdot)}{r_1}
  \end{align*}
  and
  \begin{align*}
  \trace
  \Fspq{\alpha_2(\cdot)}{p_2(\cdot)}{r_0} \hookrightarrow \trace
  \Fspq{\alpha_2(\cdot)}{p_2(\cdot)}{q_2} \hookrightarrow \trace
  \Fspq{\alpha_1(\cdot)}{p_1(\cdot)}{r_1}.
  \end{align*}

  We complete the proof by showing that $\trace
  \Fspq{\alpha_1(\cdot)}{p_1(\cdot)}{r_1} \hookrightarrow \trace
  \Fspq{\alpha_2(\cdot)}{p_2(\cdot)}{r_0}$. Let $f \in \trace
  \Fspq{\alpha_1(\cdot)}{p_1(\cdot)}{r_1}$. According to
  Theorem~\ref{thm:atom_Fspxq} we have the representation
  \begin{align*}
  f = \sum_{Q\in \mathcal{D}^+} t_Q\, a_Q \quad \text{with} \quad
  \bignorm{ \set{t_Q}_Q}_{\fspq{\alpha_1(\cdot)}{p_1(\cdot)}{r_1}} \leq c
  \norm{f}_{\Fspq{\alpha_1(\cdot)}{p_1(\cdot)}{r_1}},
  \end{align*}
  where the $a_Q$ are smooth atoms for
  $\Fspq{\alpha_1(\cdot)}{p_1(\cdot)}{r_1}$ satisfying \ref{itm:mol1} and
  \ref{itm:mol2} up to high order. Then they are also smooth atoms for
  $\Fspq{\alpha_2(\cdot)}{p_2(\cdot)}{r_0}$.

  Let $A:= \set{Q \in \mathcal{D}^+\,:\, 3\overline{Q} \cap \set{x_n = 0}
    \not=\emptyset}$.  If $Q\in A$ is contained in the closed upper half
  space, then we write $Q\in A^+$, otherwise $Q\in A^-$. We set
  $\widetilde{t}_Q = t_Q$ when $Q\in A$, and $\widetilde{t}_Q = 0$
otherwise.  Then we
  define $\widetilde{f} =\sum_{Q\in \mathcal{D}^+} \widetilde{t}_Q
  \widetilde{a}_Q $.  It is clear that $\trace f = \trace
  \widetilde{f}$, since all the atoms of $f$ whose support intersects
$\R^{n-1}$ are included in $\widetilde{f}$. For $Q\in A^+$ we define
\begin{align*}
E_Q = \bigset{x\in
    Q\,\colon\, \tfrac34 \ell(Q)\le x_n \le \ell(Q)};
\end{align*}
for $Q\in A^-$ we define
\begin{align*}
E_Q = \bigset{(x',x_n)\in \Rn\,\colon\,
(x',-x_n)\in Q,\ \tfrac12 \ell(Q)\le x_n \le \tfrac34 \ell(Q)};
\end{align*}
for all other cubes $E_Q=\emptyset$.
If $Q\in A$, then $\abs{Q}=4\abs{E_Q}$; moreover,
  $\set{E_Q}_Q$ covers each point at most three times.

  By Theorem~\ref{thm:Sphi_inv} and Lemma~\ref{lem:E_Q} we conclude that
  \begin{align*}
    \norm{\widetilde{f}}_{\Fspq{\alpha_2(\cdot)}{p_2(\cdot)}{r_0}}
    &\leq c \bignorm{
      \set{\widetilde{t}_Q}_Q}_{\fspq{\alpha_2(\cdot)}{p_2(\cdot)}{r_0}}
    \leq c \Biggnorm{ \biggnorm{ 2^{\nu\alpha_2(x)} \sum_{Q \in
          \mathcal{D}_\nu} \abs{t_Q}\, \abs{Q}^{-\frac{1}{2}}\,
        \chi_{E_Q} }_{l^{r_0}_\nu} }_{L^{p_2(\cdot)}_x}.
  \end{align*}
The inner norm consists of at most three non-zero members for each $x\in \Rn$.
Therefore, we can replace $r_0$ by $r_1$. Moreover,
  each $E_Q$ is supported in the upper half space, where $\alpha_2$ and $\alpha_1$,
  and $p_2$ and $p_1$ agree. Thus,
  \begin{align*}
    \norm{\widetilde{f}}_{\Fspq{\alpha_2(\cdot)}{p_2(\cdot)}{r_0}}
    \leq c \Biggnorm{
      \biggnorm{ 2^{\nu\alpha_1(x)} \sum_{Q \in \mathcal{D}_\nu}
        \abs{t_Q}\, \abs{Q}^{-\frac{1}{2}}\, \chi_{E_Q} }_{l^{r_1}_\nu}
    }_{L^{p_1(\cdot)}_x}.
  \end{align*}
  The right hand side is bounded by
  $\norm{f}_{\Fspq{\alpha_1(\cdot)}{p_1(\cdot)}{r_1}}$ according to
  Theorem~\ref{thm:Sphi_inv} and Lemma~\ref{lem:E_Q}.  Therefore,
  $\trace\Fspq{\alpha_1(\cdot)}{p_1(\cdot)}{r_1} \hookrightarrow \trace
  \Fspq{\alpha_2(\cdot)}{p_2(\cdot)}{r_0}$, and the claim follows.
\end{proof}


For the next proposition we recall the common notation
$\Fsp{\alpha(\cdot)}{p(\cdot)} = \Fspq{\alpha(\cdot)}{p(\cdot)}{p(\cdot)}$
for the Triebel--Lizorkin space with identical primary and secondary
indices of integrability. The next result shows that the trace space
depends only on the values of the indices at the boundary, as should
be expected.

\begin{proposition}
  \label{prop:trace}
  Let $p_1$, $p_2$, $q_1$, $\alpha_1$ and $\alpha_2$ be as in
  the Standing Assumptions.
  Assume that $\alpha_1(x)=\alpha_2(x)$ and $p_1(x)=p_2(x)$ for all
  $x\in \R^{n-1}\times \set{0}$. Then
  \begin{align*}
    \trace \Fspq{\alpha_1(\cdot)}{p_1(\cdot)}{q_1(\cdot)}(\R^n) =
    \trace \Fsp{\alpha_2(\cdot)}{p_2(\cdot)}(\R^n).
  \end{align*}
\end{proposition}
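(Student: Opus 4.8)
The plan is to deduce Proposition~\ref{prop:trace} from Lemma~\ref{lem:trace} by linking the two trace spaces through a short chain of auxiliary Triebel--Lizorkin spaces, each consecutive pair of which falls under the hypotheses of that lemma. The obstruction is that Lemma~\ref{lem:trace} asks for two exponent systems that coincide on an \emph{entire} half-space and are, in addition, globally comparable (the system with the larger $\alpha$ having the smaller $p$), whereas here $(\alpha_1,p_1)$ and $(\alpha_2,p_2)$ agree only on the hyperplane $\R^{n-1}\times\{0\}$ and need not be comparable off it. The remedy is to route the chain through a system with a pointwise \emph{minimal} smoothness, using glued exponents to bridge each gap.

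First I would record an elementary gluing fact: if $g_1,g_2\in C^{\ln}_\loc(\R^n)$ coincide on $H:=\R^{n-1}\times\{0\}$, then the function $g$ equal to $g_1$ on the closed lower half-space and to $g_2$ on the closed upper half-space again lies in $C^{\ln}_\loc(\R^n)$. Indeed, for $x$ above $H$ and $y$ below $H$ one writes $\abs{g(x)-g(y)}\le\abs{g_2(x)-g_2(x',0)}+\abs{g_1(x',0)-g_1(y',0)}+\abs{g_1(y',0)-g_1(y)}$, using $g_1(x',0)=g_2(x',0)$, and each term is $\le c_{\log}/\ln(e+1/\abs{x-y})$ because $\abs{x'-y'}\le\abs{x-y}$, $x_n\le\abs{x-y}$ and $\abs{y_n}\le\abs{x-y}$. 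The same computation, together with the at-infinity estimate used separately in each half-space, shows that gluing two functions from $C^{\ln}(\R^n)$ again produces a function in $C^{\ln}(\R^n)$. Finally, since $\alpha_1\equiv\alpha_2$ and $\tfrac{1}{p_1}\equiv\tfrac{1}{p_2}$ on the unbounded set $H$, the limits at infinity of $\alpha_1$ and $\alpha_2$ (and those of $\tfrac{1}{p_1}$ and $\tfrac{1}{p_2}$) must coincide. Consequently $\alpha_*:=\min\{\alpha_1,\alpha_2\}$, $p_*:=\max\{p_1,p_2\}$ (so that $\tfrac{1}{p_*}=\min\{\tfrac{1}{p_1},\tfrac{1}{p_2}\}$), and any function glued from pieces of these, all satisfy the Standing Assumptions, the conditions $\alpha\ge0$, $\alpha\in L^\infty$ and $0<p^-\le p^+<\infty$ being inherited verbatim.

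Now I would introduce, besides $\alpha_*,p_*$, the glued systems $(\beta,r)$, equal to $(\alpha_1,p_1)$ on $\{x_n\le0\}$ and to $(\alpha_*,p_*)$ on $\{x_n\ge0\}$, and $(\beta',r')$, equal to $(\alpha_*,p_*)$ on $\{x_n\le0\}$ and to $(\alpha_2,p_2)$ on $\{x_n\ge0\}$; all the pieces match on $H$, so these are well-defined and, by the previous paragraph, satisfy the Standing Assumptions. A direct check of pointwise inequalities gives four links: $(\alpha_1,p_1)$ and $(\beta,r)$ agree on the lower half-space with $\alpha_1\ge\beta$, $p_1\le r$; $(\beta,r)$ and $(\alpha_*,p_*)$ agree on the upper half-space with $\beta\ge\alpha_*$, $r\le p_*$; $(\beta',r')$ and $(\alpha_*,p_*)$ agree on the lower half-space with $\beta'\ge\alpha_*$, $r'\le p_*$; and $(\alpha_2,p_2)$ and $(\beta',r')$ agree on the upper half-space with $\alpha_2\ge\beta'$, $p_2\le r'$. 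Applying Lemma~\ref{lem:trace} to these four links — taking a fixed constant, say $2$, as the secondary index of each intermediate space, applying the lemma in the first link with $\Fspq{\alpha_1(\cdot)}{p_1(\cdot)}{q_1(\cdot)}$ in the role of the larger-smoothness space (which may carry a variable secondary index) and in the last link with $\Fsp{\alpha_2(\cdot)}{p_2(\cdot)}$ in that role — yields
\begin{align*}
\trace\Fspq{\alpha_1(\cdot)}{p_1(\cdot)}{q_1(\cdot)}(\R^n)
&=\trace\Fspq{\beta(\cdot)}{r(\cdot)}{2}(\R^n)
=\trace\Fspq{\alpha_*(\cdot)}{p_*(\cdot)}{2}(\R^n)\\
&=\trace\Fspq{\beta'(\cdot)}{r'(\cdot)}{2}(\R^n)
=\trace\Fsp{\alpha_2(\cdot)}{p_2(\cdot)}(\R^n),
\end{align*}
which is the assertion. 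The one step that needs care — essentially the only content beyond the routine verification of the Standing Assumptions — is the orientation bookkeeping: one must check at each link that the comparability runs in the direction compatible with Lemma~\ref{lem:trace}, and in particular that the detour is made through $\min\{\alpha_1,\alpha_2\}$ rather than the maximum, so that at \emph{both} ends of the chain the space carrying the variable secondary index is the one with the larger smoothness, as the lemma demands.
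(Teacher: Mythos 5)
Your proof is correct and follows essentially the same route as the paper: a four-link chain of applications of Lemma~\ref{lem:trace} through exponents glued along the hyperplane and through the pointwise minimum $\min\{\alpha_1,\alpha_2\}$ (resp.\ maximum of the $p$'s). The only differences are cosmetic — the paper first reduces to $q_1=p_1$ by a separate application of Lemma~\ref{lem:trace} and orients one of the glued systems the other way, while you absorb the variable secondary index into the first and last links and spell out the (correct, and tacit in the paper) verification that gluing preserves the Standing Assumptions.
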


\begin{proof}
  By Lemma~\ref{lem:trace} we conclude that $\trace
  \Fspq{\alpha_1(\cdot)}{p_1(\cdot)}{q_1(\cdot)} =\trace
  \Fsp{\alpha_1(\cdot)}{p_1(\cdot)}$.  Therefore, we can assume that
  $q_1=p_1$.

  We define $\widetilde{\alpha}_j$ to equal $\alpha_j$ on the lower
  half space and $\min\set{\alpha_1,\alpha_2}$ on the upper half
  space and let $\widetilde{\alpha} = \min\{\alpha_1,\alpha_2\}$.
Similarly, we define $\widetilde{p}_j$ and $\widetilde p$.
Applying Lemma~\ref{lem:trace} four times in the following chain
  \begin{align*}
    \trace \Fsp{\alpha_1(\cdot)}{p_1(\cdot)}(\R^n) = \trace
    \Fsp{\widetilde{\alpha}_1(\cdot)}{\widetilde{p}_1(\cdot)}(\R^n)
= \trace
    \Fsp{\widetilde{\alpha}(\cdot)}{\widetilde{p}(\cdot)}(\R^n)
    = \trace
    \Fsp{\widetilde{\alpha}_2(\cdot)}{\widetilde{p}_2(\cdot)}(\R^n)
    = \trace \Fsp{\alpha_2(\cdot)}{p_2(\cdot)}(\R^n),
  \end{align*}
gives the result.
\end{proof}

\begin{proof}[Proof of Theorem~\ref{thm:trace}]
  By Proposition~\ref{prop:trace} it suffices to consider the case
  $q=p$ with $p$ and $\alpha$ independent of the $n$-th coordinate for
  $\abs{x_n}\le 2$.
   Let $f\in \Fsp{\alpha(\cdot)}{p(\cdot)}$ with $\|
  f\|_{\Fsp{\alpha(\cdot)}{p(\cdot)}} \leq 1$ and let $f=\sum s_Q a_Q$
  be an atomic decomposition as in Theorem~\ref{thm:atom_Fspxq}.

  We denote by $\pi$ the orthogonal projection of $\Rn$ onto
  $\R^{n-1}$, and $(x',x_n)\in \Rn = \R^{n-1} \times \R$.  For $J\in
  \mathcal{D}^{n-1}_\mu$, a dyadic cube in $\R^{n-1}$, we define $Q_i(J)
  \in \mathcal{D}_\mu^{n}$, $i=1,\ldots, 6\cdot 5^{n-1}$,
to be all the dyadic cubes satisfying $J\subset 3 Q_i$. We define $t_J =
  \abs{Q_1(J)}^{-\frac1{2n}} \sum_i \abs{s_{Q_i(J)}}$ and
$h_J(x')= t_J^{-1} \sum_i s_{Q_i} a_{Q_i}$. By $Q_+(J)$ we denote the
cube $Q_i(J)$ which has $J$ as a face (i.e.\ $J\subset \partial Q_+(J)$).

  Then we have
  \begin{align*}
  \trace f(x') = \sum_\mu \sum_{J\in \mathcal{D}^{n-1}_\mu} t_J h_J(x'),
  \end{align*}
  with convergence in $\mathcal{S}'$. The condition $\alpha-
  \frac{1}{p} - (n-1) \Big(\frac1p - 1\Big)_+>0$ implies that
molecules in $\Fsp{\alpha(\cdot)-
    \frac{1}{p(\cdot)}}{p(\cdot)}(\R^{n-1})$ are not required to
satisfy any moment conditions. Therefore, $h_J$ is a
  family of smooth molecules for this space. Consequently, by
Theorem~\ref{thm:Sphi_inv}, we find that
  \begin{align*}
  \| \trace f\|_{\Fsp{\alpha(\cdot)- \frac{1}{p(\cdot)}}{p(\cdot)}(\R^{n-1})} \leq
  c \| \set{t_J}_J \|_{\fsp{\alpha(\cdot)- \frac{1}{p(\cdot)}}{p(\cdot)}(\R^{n-1})}.
  \end{align*}
  Thus, we conclude the proof by showing that the right hand side is
  bounded by a constant. Since the norm is bounded if and only if the modular
is bounded, we see that it suffices to show that
  \begin{align*}
 & \int_{\R^{n-1}} \sum_\mu \sum_{J\in \mathcal{D}^{n-1}_\mu} \Big(
  2^{\mu \big(\alpha(x',0)-\frac1{p(x',0)} \big)} \abs{t_J} |J|^{-1/2}
  \chi_J(x',0)\Big)^{p(x',0)} dx' \\
 &\qquad =   \sum_\mu \sum_{J\in \mathcal{D}^{n-1}_\mu} 2^{-\mu}\int_J \Big( 2^{\mu
    \alpha(x',0)} \abs{t_J} |J|^{-1/2}
  \Big)^{p(x',0)} dx'
  \end{align*}
is bounded. For the integral we calculate
  \begin{align*}
    2^{-\mu}\int_J  \Big( 2^{\mu \alpha(x',0)} \abs{t_J}
    |J|^{-1/2} \Big)^{p(x',0)} dx'
    & =  \int_{Q_+(J)} \Big( 2^{\mu \alpha(x',0)}
    \abs{t_J} |J|^{-1/2} \Big)^{p(x',0)} d(x',x_n)
    \\
    & \le c  \int_{Q_+(J)} \Big( 2^{\mu \alpha(x)}
    \sum_i\abs{s_{Q_i}} |Q|^{-\frac1{2n}-\frac{n-1}{2n} }
    \Big)^{p(x)} dx
    \\
    & = c \int_{Q_+(J)} \Big( 2^{\mu \alpha(x)} \sum_i\abs{s_{Q_i}} |Q|^{-1/2} \Big)^{p(x)} dx
  \end{align*}

  Hence, we obtain
  \begin{align*}
    \sum_\mu \sum_{J\in \mathcal{D}^{n-1}_\mu}  & 2^{-\mu}\int_J \Big( 2^{\mu
      \alpha(x',0)}
    \abs{t_J} |J|^{-1/2} \Big)^{p(x',0)} dx' \\
     &\le c\, \sum_\mu \sum_{Q\in \mathcal{D}^{n}_\mu} \int_Q \Big( 2^{\mu
      \alpha(x)}  \sum_i\abs{s_{Q_i}}|Q|^{-1/2}
    \Big)^{p(x)} dx \\
 &\le c \int_{\Rn} \sum_\nu \sum_{Q\in \mathcal{D}^n_\nu} \Big( 2^{\nu
    \alpha(x)} \abs{s_Q} |Q|^{-1/2} \chi_Q(x)\Big)^{p(x)} dx,
  \end{align*}
where we again swapped the integral and the sums. Since
$\| f\|_{\Fsp{\alpha(\cdot)}{p(\cdot)}} \leq 1$,
the right hand side quantity is bounded, and we are done.
\end{proof}


\appendix


\section{Technical lemmas}
\label{sect:technical}

Recall from \eqref{E:nu} 
that
$\eta_{\nu, m}(x) = 2^{n \nu}(1+2^\nu |x|)^{-m}$.

\begin{lemma}
  \label{lem:eta_nu_monoton}
  Let $\nu_1 \geq \nu_0$, $m > n$, and $y \in \R^n$. Then
  \begin{alignat*}{2}
    \eta_{\nu_0,m}(y) &\leq 2^m\,\eta_{\nu_1,m}(y) &\qquad &\text{if }
    \abs{y} \leq 2^{-\nu_1}; \text{ and}
    \\
    \eta_{\nu_1,m}(y) &\leq 2^m\,\eta_{\nu_0,m}(y) &\qquad &\text{if }
    \abs{y} \geq 2^{-\nu_0}.
  \end{alignat*}
\end{lemma}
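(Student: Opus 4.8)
The plan is to argue directly from the definition $\eta_{\nu,m}(y) = 2^{n\nu}(1+2^\nu|y|)^{-m}$ by estimating the quotient of the two quantities in each of the two regimes separately. No deep input is required; the only point needing attention is the bookkeeping of the exponents of $2$ and, in the second case, the use of $m>n$.

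First I would treat the case $\abs{y} \leq 2^{-\nu_1}$. Writing
\[
  \frac{\eta_{\nu_0,m}(y)}{\eta_{\nu_1,m}(y)}
  = 2^{n(\nu_0-\nu_1)}\bigg(\frac{1+2^{\nu_1}\abs{y}}{1+2^{\nu_0}\abs{y}}\bigg)^{\!m},
\]
I would observe that $2^{n(\nu_0-\nu_1)} \le 1$ since $\nu_0 \le \nu_1$, and that the fraction inside the $m$-th power is at most $1 + 2^{\nu_1}\abs{y} \le 2$, because the denominator is $\ge 1$ and $2^{\nu_1}\abs{y} \le 1$. Hence the quotient is at most $2^m$, which is the first asserted inequality (indeed here one does not even need $m>n$).

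For the case $\abs{y} \geq 2^{-\nu_0}$ I would similarly write
\[
  \frac{\eta_{\nu_1,m}(y)}{\eta_{\nu_0,m}(y)}
  = 2^{n(\nu_1-\nu_0)}\bigg(\frac{1+2^{\nu_0}\abs{y}}{1+2^{\nu_1}\abs{y}}\bigg)^{\!m},
\]
and use $1 \le 2^{\nu_0}\abs{y}$ to bound the numerator by $1 + 2^{\nu_0}\abs{y} \le 2^{\nu_0+1}\abs{y}$, while the denominator is trivially $\ge 2^{\nu_1}\abs{y}$. This gives the fraction $\le 2^{\nu_0+1-\nu_1}$, so the quotient is at most $2^{n(\nu_1-\nu_0)}\,2^{m(\nu_0+1-\nu_1)} = 2^m\,2^{(m-n)(\nu_0-\nu_1)} \le 2^m$, the last step because $m>n$ and $\nu_0-\nu_1 \le 0$. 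This completes the proof.

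The ``main obstacle'' is therefore only a cosmetic one: one must keep track of which of the two exponents, $n$ or $m$, controls the sign of each power of $2$. In the first regime both powers point the favourable way; in the second, the $2^{n(\nu_1-\nu_0)}$ factor is unfavourable but is dominated by the $m$-th power of the small fraction precisely because $m>n$.
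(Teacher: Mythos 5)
Your proof is correct and is essentially the same argument as the paper's: in the first regime you bound $1+2^{\nu_1}\abs{y}\le 2$ and discard the favourable factor $2^{n(\nu_0-\nu_1)}\le 1$, and in the second you use $1+2^{\nu_0}\abs{y}\le 2\cdot 2^{\nu_0}\abs{y}$ together with $m>n$ to absorb the unfavourable factor $2^{n(\nu_1-\nu_0)}$. Nothing to add.
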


\begin{proof}
  Let $\abs{y} \leq
  2^{-\nu_1}$. Then $1+ 2^{\nu_1} \abs{y} \leq 2$ and
  \begin{align*}
    \frac{\eta_{\nu_0,m}(y)}{\eta_{\nu_1,m}(y)} &= \frac{2^{n\nu_0}
      (1+ 2^{\nu_1} \abs{y})^{m}}{2^{n\nu_1} (1+ 2^{\nu_0}
      \abs{y})^{m}} \leq \frac{2^{n\nu_0} \cdot 2^m}{2^{n\nu_1}} \leq
    2^m,
  \end{align*}
  which proves the first inequality. Assume now that $\abs{y} \geq
  2^{-\nu_0}$. Then $1+2^{\nu_0} \abs{y} \leq 2 \cdot 2^{\nu_0}
  \abs{y}$ and
  \begin{align*}
    \frac{\eta_{\nu_1,m}(y)}{\eta_{\nu_0,m}(y)} &= \frac{2^{n\nu_1}
      (1+ 2^{\nu_0} \abs{y})^{m}}{2^{n\nu_0} (1+ 2^{\nu_1}
      \abs{y})^{m}} \leq \frac{2^{n\nu_1} (2 \cdot 2^{\nu_0}
      \abs{y})^{m}}{2^{n\nu_0} (2^{\nu_1} \abs{y})^{m}} = 2^m\,
    2^{(\nu_1 - \nu_0)(n-m)} \leq 2^m,
  \end{align*}
  which gives the second inequality.
\end{proof}

\begin{lemma}
  \label{lem:conv_eta_chi}
  Let $\nu \geq 0$ and $m>n$. Then for $Q \in \mathcal{D}_\nu$,
  $y \in Q$ and $x \in \R^n$, we have
  \begin{align*}
 \eta_{\nu,m} \ast
    \bigg(\frac{\chi_Q}{\abs{Q}} \bigg)(x) \approx \eta_{\nu,m} (x-y).
  \end{align*}
\end{lemma}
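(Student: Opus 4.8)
The plan is to reduce the convolution to an average of $\eta_{\nu,m}(x-\cdot)$ over the cube $Q$ and then to exploit that $\eta_{\nu,m}$ varies by at most a bounded factor across any cube of side length $2^{-\nu}$. Since $Q\in\mathcal{D}_\nu$ has $\abs{Q}=2^{-n\nu}$, we may write
\[
\eta_{\nu,m}\ast\Big(\frac{\chi_Q}{\abs{Q}}\Big)(x)=\fint_Q \eta_{\nu,m}(x-z)\,dz,
\]
so it suffices to compare $\eta_{\nu,m}(x-z)$ with $\eta_{\nu,m}(x-y)$ uniformly for $z,y\in Q$.

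First I would use the diameter bound $\abs{y-z}\le \sqrt n\,2^{-\nu}$ for $y,z\in Q$, which gives $2^\nu\abs{x-z}\le 2^\nu\abs{x-y}+\sqrt n$, and hence
\[
1+2^\nu\abs{x-z}\le (1+\sqrt n)\,(1+2^\nu\abs{x-y});
\]
by symmetry the reverse inequality (with the roles of $y$ and $z$ exchanged) also holds. Raising these to the power $-m$ and multiplying by $2^{n\nu}$ yields
\[
(1+\sqrt n)^{-m}\,\eta_{\nu,m}(x-y)\le \eta_{\nu,m}(x-z)\le (1+\sqrt n)^{m}\,\eta_{\nu,m}(x-y)
\]
for all $z\in Q$, with a constant depending only on $n$ and $m$.

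Finally I would integrate this chain of inequalities in $z$ over $Q$ against the normalized measure $\chi_Q/\abs{Q}$; since the outer terms are independent of $z$, the average $\fint_Q\eta_{\nu,m}(x-z)\,dz$ is trapped between them, which is exactly the asserted equivalence $\eta_{\nu,m}\ast(\chi_Q/\abs{Q})(x)\approx\eta_{\nu,m}(x-y)$ with implied constant $(1+\sqrt n)^{m}$. There is no real obstacle; the only point worth stressing is that the comparison constant is independent of $\nu$, $Q$, $x$ and $y$, which is automatic because the side length of $Q$ is matched exactly to the scale $2^{-\nu}$ of $\eta_{\nu,m}$.
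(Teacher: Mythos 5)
Your proposal is correct and follows essentially the same route as the paper: both arguments use the diameter bound $\abs{y-z}\le\sqrt{n}\,2^{-\nu}$ to show $1+2^{\nu}\abs{x-z}$ and $1+2^{\nu}\abs{x-y}$ agree up to the factor $1+\sqrt{n}$, raise this to the power $-m$, and integrate over $Q$ against the normalized measure. No gaps.
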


\begin{proof} Fix $Q \in \mathcal{D}_\nu$ and set $d=1+\sqrt{n}$.
  If $y, z \in Q$, then $\abs{y-z} \leq \sqrt{n}\, 2^{-\nu}$ and
  \begin{align*}
    \tfrac{1}{d}\, (1+2^{\nu} \abs{x-z})&\le
    1+ \tfrac{1}{d} \cdot 2^{\nu}
    (\abs{x-z} - \sqrt{n}\, 2^{-\nu})  \\
    &\le 1+2^{\nu} \abs{x-y} \\
    &\leq 1+2^{\nu}\, (\abs{x-z} +
    \sqrt{n} \, 2^{-\nu})  \leq d\, (1+2^{\nu} \abs{x-z}).
  \end{align*}
  Therefore, for all $y, z \in Q$ we have
  \begin{align*}
    2^{-m} \eta_{\nu,m}(x-y) \leq \eta_{\nu,m}(x-z)  \leq 2^m
    \eta_{\nu,m}(x-y).
  \end{align*}
The claim follows when we integrate this estimate over $z\in Q$
and use the formula
  \[
     \eta_{\nu, m} \ast \bigg(\frac{\chi_{Q}}{\abs{Q}} \bigg)(x) =
    \frac{1}{\abs{Q}} \int_{Q} \eta_{\nu, m} (x-z)\,dz. \qedhere
  \]
\end{proof}

\begin{lemma}
  \label{lem:conv_eta_eta}
  For $\nu_0, \nu_1 \geq 0$ and $m > n$, we have
  \begin{align*}
    \eta_{\nu_0,m} \ast \eta_{\nu_1,m} &\approx
    \eta_{\min\set{\nu_0,\nu_1}, m}
  \end{align*}
  with the constant depending only on $m$ and $n$.
\end{lemma}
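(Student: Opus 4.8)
The plan is to use the symmetry of convolution to reduce to the case $\nu_0\le\nu_1$, so that $\min\{\nu_0,\nu_1\}=\nu_0$ and the assertion becomes $\eta_{\nu_0,m}\ast\eta_{\nu_1,m}\approx\eta_{\nu_0,m}$. All constants will depend only on $m$ and $n$. The one structural fact used throughout is that $m>n$ makes $\eta_{\nu_1,m}$ a (scaled) approximate identity: the substitution $z=2^{\nu_1}y$ gives $\int_{\R^n}\eta_{\nu_1,m}(y)\,dy=\int_{\R^n}(1+|z|)^{-m}\,dz=:c_m<\infty$, independently of $\nu_1$.

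For the lower bound I would restrict the convolution integral to the ball $|y|\le 2^{-\nu_1}$. There $1+2^{\nu_1}|y|\le 2$, hence $\eta_{\nu_1,m}(y)\ge 2^{-m}2^{n\nu_1}$; and since $|y|\le 2^{-\nu_1}\le 2^{-\nu_0}$, the triangle inequality gives $1+2^{\nu_0}|x-y|\le 2+2^{\nu_0}|x|\le 2(1+2^{\nu_0}|x|)$, so $\eta_{\nu_0,m}(x-y)\ge 2^{-m}\eta_{\nu_0,m}(x)$. Multiplying these estimates and integrating over the ball, whose measure is a constant times $2^{-n\nu_1}$, yields $\eta_{\nu_0,m}\ast\eta_{\nu_1,m}(x)\ge c\,\eta_{\nu_0,m}(x)$.

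For the upper bound I would split according to the size of $|x|$, in the spirit of Lemma~\ref{lem:eta_nu_monoton}. If $|x|\le 2^{-\nu_0}$, then $\eta_{\nu_0,m}(x)\approx 2^{n\nu_0}$, and crudely $\eta_{\nu_0,m}\ast\eta_{\nu_1,m}(x)\le \|\eta_{\nu_0,m}\|_{\infty}\,\|\eta_{\nu_1,m}\|_{1}=c_m\,2^{n\nu_0}\le c\,\eta_{\nu_0,m}(x)$. If $|x|>2^{-\nu_0}$, split the $y$–integral into $R_1=\{\,|x-y|\ge|x|/2\,\}$ and $R_2=\{\,|x-y|<|x|/2\,\}$. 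On $R_1$ one has $1+2^{\nu_0}|x-y|\ge\tfrac12(1+2^{\nu_0}|x|)$, so $\eta_{\nu_0,m}(x-y)\le 2^m\eta_{\nu_0,m}(x)$ and the $R_1$–integral is at most $2^m\eta_{\nu_0,m}(x)\int\eta_{\nu_1,m}=c\,\eta_{\nu_0,m}(x)$. On $R_2$ one has $|y|>|x|/2$; using the elementary bound $\eta_{\nu_1,m}(y)\le 2^{(n-m)\nu_1}|y|^{-m}\le 2^{(n-m)\nu_0}|y|^{-m}$ (valid because $\nu_1\ge\nu_0$ and $n-m<0$) together with $|y|>|x|/2$ and $|x|>2^{-\nu_0}$ (the latter giving $\eta_{\nu_0,m}(x)\ge 2^{-m}2^{(n-m)\nu_0}|x|^{-m}$), one obtains $\eta_{\nu_1,m}(y)\le c\,\eta_{\nu_0,m}(x)$ for every $y\in R_2$; hence the $R_2$–integral is at most $c\,\eta_{\nu_0,m}(x)\int\eta_{\nu_0,m}(x-y)\,dy=c\,\eta_{\nu_0,m}(x)$. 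Adding the two contributions completes the upper bound, and together with the lower bound the lemma.

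There is no serious obstacle; the computations are the routine comparisons of the weights $(1+2^\nu|\cdot|)^{-m}$ under the triangle inequality. The only point that needs a little care is to arrange the case analysis ($|x|$ small versus large, and $y$ near $x$ versus $y$ near the origin) so that the factor $2^{n(\nu_1-\nu_0)}$ one naively picks up on $R_2$ is absorbed using $n-m<0$; this is precisely where $\nu_1\ge\nu_0$ enters and is why the convolution collapses to the scale $\min\{\nu_0,\nu_1\}$.
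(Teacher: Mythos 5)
Your proof is correct and follows essentially the same strategy as the paper's: the lower bound by restricting the convolution to the ball $\abs{y}\le 2^{-\nu_1}$, and the upper bound by splitting into a region where $\eta_{\nu_0,m}(x-y)\lesssim\eta_{\nu_0,m}(x)$ (and integrating $\eta_{\nu_1,m}$) and a region near $x$ where $\eta_{\nu_1,m}(y)\lesssim\eta_{\nu_0,m}(x)$ via the scale comparison that uses $m>n$. The paper additionally normalizes to $\nu_0=0$ by dilation and packages the scale comparison as Lemma~\ref{lem:eta_nu_monoton}, but these are cosmetic differences.
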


\begin{proof}
  Using dilations and symmetry
we may assume that $\nu_0 = 0$ and $\nu_1 \geq 0$.
  Since $m > n$, we have $\norm{\eta_{\nu_0,m}}_1 \leq c$ and
  $\norm{\eta_{\nu_1,m}}_1 \leq c$.

  We start with the direction ``$\ge$''.  If $\abs{y} \leq
  2^{-\nu_1} \leq 1$, then $1+\abs{x-y} \leq 2(1+\abs{x})$, and
  therefore, $\eta_{\nu_0,m}(x-y) \geq c\, \eta_{\nu_0,m}(x)$.  Hence,
  \begin{align*}
    \eta_{\nu_0,m} \ast \eta_{\nu_1,m}(x) &\geq \int_{\set{y\,:\,
        \abs{y} \leq 2^{-\nu_1}}} \eta_{\nu_0,m}(x-y)\,
    \eta_{\nu_1,m}(y)\,dy
    \\
    &\geq c\,\eta_{\nu_0,m}(x)\,\int_{\set{y\,:\, \abs{y} \leq
        2^{-\nu_1}}} 2^{n\nu_1} (1+2^{\nu_1} \abs{y})^{-m}\,dy
    \\
    &\geq c\,\eta_{\nu_0,m}(x)\,\int_{\set{y\,:\, \abs{y} \leq
        2^{-\nu_1}}} 2^{n\nu_1}\, 2^{-m}\,dy
    \\
    &\geq c\,2^{-m}\,\eta_{\nu_0,m}(x).
  \end{align*}

  We now prove the opposite direction, ``$\le$''.  Let
  $A := \set{y \in \R^n\,:\, \abs{y} \leq 3\ \text{or}\ \abs{x-y} > \abs{x}/2}$.
  If  $y \in A$, then $1+\abs{x-y} \geq
  \frac{1}{4}(1+\abs{x})$, which implies that $\eta_{\nu_0,m}(x-y) \leq c\,
  \eta_{\nu_0,m}(x)$ and
  \begin{align*}
    \int_{A} \eta_{\nu_0,m}(x-y)\, \eta_{\nu_1,m}(y)\,dy &\leq
    c\, \eta_{\nu_0,m}(x) \int_{A} \eta_{\nu_1,m}(y)\,dy\, \leq
    c\, \eta_{\nu_0,m}(x).
  \end{align*}
  If $y\in \Rn\setminus A$, then $\abs{y} \geq 1$ and $\abs{y} \geq \frac{1}{2}
  \abs{x}$. So $\eta_{\nu_1,m}(y) \leq c\,\eta_{\nu_0,m}(y) \leq
    c\,\eta_{\nu_0,m}(x)$ by Lemma~\ref{lem:eta_nu_monoton}. Hence,
  \begin{align*}
    \int_{\Rn\setminus A} \eta_{\nu_0,m}(x-y)\, \eta_{\nu_1,m}(y)\,dy
&\leq c\,\int_{\Rn\setminus A}
    \eta_{\nu_0,m}(x-y)\, dy\, \eta_{\nu_0,m}(x) \leq
    c\,\eta_{\nu_0,m}(x).
  \end{align*}
  Combining the estimates over $A$ and $\Rn \setminus A$ gives
  \[
    \eta_{\nu_0,m} \ast \eta_{\nu_1,m}(x)
\leq c\,
    \eta_{\min\set{\nu_0,\nu_1}, m}(x). \qedhere
  \]
\end{proof}

\begin{lemma}
  \label{lem:etapower}
  Let $r\in(0,1]$. Then for $\nu$, $\mu \geq 0$, $m > \frac{n}{r}$
  and $Q_\mu \in \mathcal{D}_\mu$, we have
  \begin{align*}
    \big( \eta_{\nu,m} \ast \eta_{\mu,m}\ast \chi_{Q_\mu} \big)^r
    &\approx 2^{(\mu-\nu)_+ n(1-r)}\,\eta_{\nu,mr} \ast
    \eta_{\mu,mr}\ast \chi_{Q_\mu},
  \end{align*}
  where the constant depends only on $m$, $n$ and $r$.
\end{lemma}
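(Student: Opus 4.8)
The plan is to collapse the two $\eta$-convolutions into a single one and then exploit that the resulting kernel is essentially constant over the small cube $Q_\mu$. Set $\kappa:=\min\{\nu,\mu\}$. Since $m>n/r\ge n$ we also have $mr>n$, so Lemma~\ref{lem:conv_eta_eta} applies both with exponent $m$ and with exponent $mr$, giving $\eta_{\nu,m}\ast\eta_{\mu,m}\approx\eta_{\kappa,m}$ and $\eta_{\nu,mr}\ast\eta_{\mu,mr}\approx\eta_{\kappa,mr}$ as pointwise comparisons on all of $\Rn$. Convolving a pointwise comparison with the nonnegative function $\chi_{Q_\mu}$ preserves it with the same constant, so the assertion reduces to
\begin{align*}
  \bigl(\eta_{\kappa,m}\ast\chi_{Q_\mu}\bigr)^{r}
  \approx 2^{(\mu-\nu)_{+}\, n(1-r)}\,\eta_{\kappa,mr}\ast\chi_{Q_\mu}.
\end{align*}

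Next I would prove the ``quasi-constancy'' estimate $\eta_{\kappa,\tau}\ast\chi_{Q_\mu}(x)\approx|Q_\mu|\,\eta_{\kappa,\tau}(x-x_{Q_\mu})$ for $\tau\in\{m,mr\}$, uniformly in $x\in\Rn$. This is precisely the argument in the proof of Lemma~\ref{lem:conv_eta_chi}: since $\ell(Q_\mu)=2^{-\mu}\le 2^{-\kappa}$, every $z\in Q_\mu$ satisfies $|z-x_{Q_\mu}|\le\sqrt n\,2^{-\kappa}$, so $1+2^{\kappa}|x-z|$ and $1+2^{\kappa}|x-x_{Q_\mu}|$ differ by at most the factor $1+\sqrt n$, whence $\eta_{\kappa,\tau}(x-z)\approx\eta_{\kappa,\tau}(x-x_{Q_\mu})$; integrating over $z\in Q_\mu$ yields the claim. (Note that $Q_\mu$ need not lie in $\mathcal{D}_\kappa$ when $\mu>\kappa$, so Lemma~\ref{lem:conv_eta_chi} cannot be quoted verbatim, but its proof works unchanged with $\kappa$ in place of the scale index.)

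Finally I would assemble the pieces. By quasi-constancy the left-hand side of the displayed reduction is $\approx|Q_\mu|^{r}\,\eta_{\kappa,m}(x-x_{Q_\mu})^{r}$, and the elementary identity $\eta_{\kappa,m}(y)^{r}=2^{-n\kappa(1-r)}\,\eta_{\kappa,mr}(y)$ rewrites this as $|Q_\mu|^{r}\,2^{-n\kappa(1-r)}\,\eta_{\kappa,mr}(x-x_{Q_\mu})$; similarly the right-hand side is $\approx 2^{(\mu-\nu)_{+}\,n(1-r)}\,|Q_\mu|\,\eta_{\kappa,mr}(x-x_{Q_\mu})$. Using $|Q_\mu|=2^{-n\mu}$ together with $\mu-\kappa=(\mu-\nu)_{+}$, a direct comparison of exponents shows $|Q_\mu|^{r}\,2^{-n\kappa(1-r)}=2^{(\mu-\nu)_{+}\,n(1-r)}\,|Q_\mu|$, which is exactly the factor appearing on the right, completing the proof. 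I do not expect a genuine obstacle: the only point requiring care is the bookkeeping of the normalizing powers of $2$; since $r$ is fixed and $mr>n$, all implied constants depend only on $m$, $n$ and $r$, as asserted.
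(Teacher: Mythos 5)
Your proof is correct and follows essentially the same route as the paper's: both rest on Lemma~\ref{lem:conv_eta_eta}, the quasi-constancy of $\eta\ast\chi_{Q_\mu}$ coming from (the proof of) Lemma~\ref{lem:conv_eta_chi}, and the elementary identity $\big(\eta_{\kappa,m}\big)^r = 2^{-n\kappa(1-r)}\eta_{\kappa,mr}$, with the same exponent bookkeeping. The only difference is the order of the two reductions: the paper first replaces $\eta_{\mu,m}\ast\chi_{Q_\mu}$ by $2^{-n\mu}\eta_{\mu,m}$ at the cube's own scale (so Lemma~\ref{lem:conv_eta_chi} applies verbatim) and then collapses the two kernels, whereas you collapse first and therefore need, and correctly justify, the mild extension of Lemma~\ref{lem:conv_eta_chi} to the case where the kernel scale $\kappa$ is coarser than the cube.
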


\begin{proof}
  Without loss of generality, we may assume that $x_{Q_\mu} = 0$.
  Then by Lemmas ~\ref{lem:conv_eta_chi} and \ref{lem:conv_eta_eta}
  \begin{alignat*}{2}
    \eta_{\nu,m} \ast \eta_{\mu,m}\ast \chi_{Q_\mu} &\approx
    2^{-n\mu}\,\eta_{\nu,m} \ast \eta_{\mu,m} &&\approx
    2^{-n\mu}\,\eta_{\min\{\nu,\mu\},m},
    \\
    \eta_{\nu,mr} \ast \eta_{\mu,mr}\ast \chi_{Q_\mu} &\approx
    2^{-n\mu}\,\eta_{\nu,mr} \ast \eta_{\mu,mr} &&\approx
    2^{-n\mu}\,\eta_{\min\{\nu,\mu\},mr}.
  \end{alignat*}
{}From the definition of $\eta$ we get
\begin{align*}
\big( \eta_{\min\set{\nu,\mu},m}\big)^r = 2^{\min\set{\nu,\mu}n
(r-1)}\, \eta_{\min\set{\nu,\mu},mr}.
\end{align*}
Thus, we get
  \[
    \big( \eta_{\nu,m} \ast \eta_{\mu,m}\ast \chi_{Q_\mu} \big)^r
    \approx 2^{\mu n(1-r)} 2^{\min\set{\nu,\mu}n(r-1)}\, \eta_{\nu,mr}
    \ast \eta_{\mu,mr}\ast \chi_{Q_\mu}. \qedhere
  \]
\end{proof}

\begin{lemma}
  \label{lem:van_moments}
  Let $g,h \in L^1_\loc(\R^n)$ and $k \in \N_0$ such that
  $D^\mu g \in L^1(\R^n)$ for all multi-indices $\mu$ with $|\mu|\le k$.
Assume that there exist $m_0 >n$ and
  $m_1 > n+k$ such that $\abs{h} \leq \eta_{\mu,m_1}$ and
  $\abs{D^\mu g}\leq 2^{\nu k}\,\eta_{\nu,m_0}$. Further, suppose that
  \begin{align*}
    \int_{\Rn} x^{\gamma} h(x) \, dx = 0, \quad\text{for } ~~\abs{\gamma} \leq
    k-1.
  \end{align*}
  Then \quad $\displaystyle \bigabs{g \ast h} \leq c
  \, 2^{k(\nu-\mu)} \,\eta_{\nu,m_0} \ast \eta_{\mu,m_1-k}$.
\end{lemma}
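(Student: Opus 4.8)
The plan is to use the vanishing moments of $h$ to replace $g$ by a Taylor remainder at the scale of $h$. Fix $x\in\R^n$. Since $h$ has vanishing moments up to order $k-1$, we may write, for any polynomial $P$ of degree $\le k-1$,
\[
g\ast h(x)=\int_{\Rn} g(x-y)\,h(y)\,dy=\int_{\Rn}\bigl(g(x-y)-P(y)\bigr)\,h(y)\,dy.
\]
The natural choice is to let $P$ be the degree $k-1$ Taylor polynomial of $y\mapsto g(x-y)$ at $y=0$. By Taylor's theorem with integral remainder,
\[
g(x-y)-P(y)=\sum_{|\mu|=k}\frac{(-y)^\mu}{\mu!}\,k\!\int_0^1 (1-t)^{k-1}\,D^\mu g(x-ty)\,dt,
\]
so that
\[
\bigl|g(x-y)-P(y)\bigr|\le c\,|y|^k\,\sup_{0\le t\le 1}\max_{|\mu|=k}\bigl|D^\mu g(x-ty)\bigr|
\le c\,|y|^k\,2^{\nu k}\sup_{0\le t\le 1}\eta_{\nu,m_0}(x-ty),
\]
using the hypothesis $|D^\mu g|\le 2^{\nu k}\eta_{\nu,m_0}$.

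The main technical point is then to control $\sup_{0\le t\le 1}\eta_{\nu,m_0}(x-ty)$. I would split according to whether $|y|\le \tfrac12|x|$ or $|y|>\tfrac12|x|$. In the first regime $|x-ty|\ge |x|-|y|\ge \tfrac12|x|$ for all $t\in[0,1]$, so $\eta_{\nu,m_0}(x-ty)\le c\,\eta_{\nu,m_0}(x)$; in the second regime $|x|\le 2|y|$, and one simply bounds $\eta_{\nu,m_0}(x-ty)\le 2^{n\nu}$, which combined with the factor $|y|^k$ and the decay of $h$ will still be summable since $m_1>n+k$. In either case, after also inserting $|y|^k 2^{\nu k}=(2^\nu|y|)^k\le c\,2^{k(\nu-\mu)}(2^\mu|y|)^k$ and absorbing $(2^\mu|y|)^k$ against the decay of $h$ (note $|h(y)|\le\eta_{\mu,m_1}(y)$ and $m_1-k>n$, so $(2^\mu|y|)^k\eta_{\mu,m_1}(y)\le c\,\eta_{\mu,m_1-k}(y)$), we arrive at the pointwise bound
\[
\bigl|g(x-y)-P(y)\bigr|\,|h(y)|\le c\,2^{k(\nu-\mu)}\,\eta_{\nu,m_0}(x-y)\,\eta_{\mu,m_1-k}(y)
\]
for $|y|\le\tfrac12|x|$, and an analogous (in fact better, because of extra decay) bound in the complementary region. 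Integrating in $y$ then yields
\[
|g\ast h(x)|\le c\,2^{k(\nu-\mu)}\,\bigl(\eta_{\nu,m_0}\ast\eta_{\mu,m_1-k}\bigr)(x),
\]
which is the claim.

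The step I expect to require the most care is the passage from $\sup_{0\le t\le 1}\eta_{\nu,m_0}(x-ty)$ to an expression that convolves cleanly against $\eta_{\mu,m_1-k}(y)$: one has to check that the ``bad'' region $|y|>\tfrac12|x|$, where the supremum over $t$ can be as large as $2^{n\nu}$, does not destroy the estimate. The resolution is that there the constraint $|x|\le 2|y|$ lets us trade the loss $2^{n\nu}$ against part of the decay $\eta_{\mu,m_1}(y)$ and against $\eta_{\nu,m_0}(x-y)$ evaluated at a comparable point; since $m_1>n+k$ there is room to spare, and one can, e.g., bound that contribution directly by $c\,2^{k(\nu-\mu)}\eta_{\mu,m_1-k}(x)$, which is itself dominated by $c\,2^{k(\nu-\mu)}(\eta_{\nu,m_0}\ast\eta_{\mu,m_1-k})(x)$ by Lemma~\ref{lem:conv_eta_eta}-type reasoning (since $\min\{\nu,\mu\}$-decay is dominated by $\mu$-decay). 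The remaining estimates are routine integrations using $m_0>n$ and $m_1-k>n$.
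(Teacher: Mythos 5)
Your opening moves coincide with the paper's: subtract the degree $k-1$ Taylor polynomial of $y\mapsto g(x-y)$ using the vanishing moments of $h$, and trade $(2^{\nu}|y|)^k=2^{k(\nu-\mu)}(2^{\mu}|y|)^k$ against the decay of $h$ to produce $2^{k(\nu-\mu)}\eta_{\mu,m_1-k}$. Your near region $|y|\le\tfrac12|x|$ is also handled correctly. The gap is exactly where you flagged it, and it is not repairable by the trade you propose. Once you replace the Taylor remainder by $|y|^k\sup_{0\le t\le1}|D^{\gamma}g(x-ty)|$ and bound the supremum by $2^{\nu k}2^{n\nu}$ on $|y|>\tfrac12|x|$, the localization of $g$ is irretrievably lost: that supremum cannot be compared to $\eta_{\nu,m_0}(x-y)$ or to $\eta_{\nu,m_0}(x)$, because the segment $[x,x-y]$ may pass through the peak of $\eta_{\nu,m_0}$ at the origin even when both endpoints are far from it. Concretely, your far-region contribution is at best
\begin{align*}
c\,2^{k(\nu-\mu)}\,2^{n\nu}\int_{|y|>|x|/2}\eta_{\mu,m_1-k}(y)\,dy
\;\le\; c\,2^{k(\nu-\mu)}\,2^{n\nu}\,\big(1+2^{\mu}|x|\big)^{-(m_1-k-n)},
\end{align*}
while for large $|x|$ the right-hand side of the lemma is comparable to $2^{k(\nu-\mu)}\big(\eta_{\nu,m_0}(x)+\eta_{\mu,m_1-k}(x)\big)$, which decays with exponent $\min\{m_0,m_1-k\}$. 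The hypotheses only give $m_1-k-n>0$ and allow, say, $m_0=2n$ with $m_1=n+k+1$, in which case your bound exceeds the target by an unbounded factor as $|x|\to\infty$. The intermediate claim that the far-region term is dominated by $c\,2^{k(\nu-\mu)}\eta_{\mu,m_1-k}(x)$ fails for the same reason (the prefactor $2^{n\nu}$ and the loss of $n$ in the decay exponent cannot both be absorbed), and the subsequent domination $\eta_{\mu,m_1-k}(x)\le c\,(\eta_{\nu,m_0}\ast\eta_{\mu,m_1-k})(x)$ is false near $x=0$ when $\mu>\nu$, since the left side peaks at height $2^{n\mu}$ and the convolution only at height about $2^{n\min\{\nu,\mu\}}$.

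The paper avoids all of this by never taking the supremum along the segment. It keeps the remainder in integral form, $\int_{[x,y]}\sup_{|\gamma|=k}|D^{\gamma}g(\xi)|\,|x-\xi|^{k-1}\,d\xi$, pairs it with $|h(x-y)|$, and changes variables via $y-x=r(\xi-x)$, $r\ge1$. The $r$-integral of $\eta_{\mu,m_1}\big(r(x-\xi)\big)$ costs only a factor $\log\big(e+2^{-\mu}|x-\xi|^{-1}\big)\,\eta_{\mu,m_1}(x-\xi)$, and this logarithm is absorbed by $\big(|x-\xi|/(1+2^{\mu}|x-\xi|)\big)^{k}\le c\,2^{-\mu k}$; one lands directly on $2^{k(\nu-\mu)}\,\eta_{\nu,m_0}\ast\eta_{\mu,m_1-k}(x)$ with no far-field case analysis. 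To rescue your route you would either have to reproduce this change of variables, or impose something like $m_1\ge m_0+k+n$, which is stronger than the lemma claims and than its applications require.
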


\begin{proof}
  If $k=0$, then the estimate is obvious, so we can assume $k \geq 1$.
  It suffices to prove the result for $g,h$ smooth.  Since $h$ has
  vanishing moments up to order $k-1$ we estimate by Taylor's formula
  \begin{align*}
    \bigabs{g \ast h(x)}
&\leq \int_{\R^n} \biggabs{ \bigg( g(y)
      - \sum_{\abs{\gamma}\leq k-1} D^\gamma g(x)\,
      \frac{(y-x)^\gamma}{\gamma!} \bigg)\, h(x-y)} \,dy
    \\
    &\leq c\,\int_{\R^n} \int_{[x,y]} \sup_{|\mu|=k}|D^\mu g(\xi)|\,
    \abs{x-\xi}^{k-1}\,d\xi\, \abs{h(x-y)} \,dy
    \\
    &\leq c\,\int_{\R^n} \int_{[x,y]} 2^{\nu k}\,
    \eta_{\nu,m_0}(\xi)\, \abs{x-\xi}^{k-1} \eta_{\mu,m_1}(x-y) \,d\xi \,dy
  \end{align*}
Changing the order of integration with $y - x = r(\xi - x)$, where $r\ge 1$,
yields the inequality
  \begin{align}
   \label{E:star3}
   \bigabs{g \ast h(x)}
  & \le c\,2^{\nu k}\, \int_{\R^n} \int_1^\infty
    \eta_{\nu,m_0}(\xi)\, \abs{x-\xi}^{k}\,
    \eta_{\mu,m_1} \big(r(x-\xi)\big) \,dr \,d\xi.
  \end{align}
We estimate the inner integral: for $2^\mu r |x-\xi|\ge 1$ we have
$\eta_{\mu,m_1} \big(r(x-\xi)\big) \approx r^{-m_1}
\eta_{\mu,m_1}(x-\xi)$; for $2^\mu r \, |x-\xi|<1$ we simply use
$\eta_{\mu,m_1} \big(r(x-\xi)\big) \le \eta_{\mu,m_1}(x-\xi)$. Thus,
we find that
\begin{align*}
\begin{split}
\int_1^\infty \eta_{\mu,m_1} \big(r(x-\xi)\big) \,dr &\le
\bigg(\int_1^\infty r^{-1-m_1} \, dr
 + \int_1^{2^{-\mu} |x-\xi|^{-1}} r^{-1}\, dr \bigg) \,
 \eta_{\mu,m_1}(x-\xi) \\
&\approx \log \big(e+2^{-\mu} |x-\xi|^{-1}\big) \eta_{\mu,m_1}(x-\xi).
\end{split}
\end{align*}
Substituting this into \eqref{E:star3} produces
\begin{align*}
    \bigabs{g \ast h(x)} &\leq c\, 2^{\nu k}\, \int_{\R^n}
\eta_{\nu,m_0}(\xi)\, \abs{x-\xi}^k  \log \big(e+2^{-\mu}
|x-\xi|^{-1}\big) \, \eta_{\mu,m_1}(x-\xi)
    \,d\xi
    \\
    & = c\,2^{\nu k}\, \int_{\R^n}
\log \big(e+2^{-\mu} |x-\xi|^{-1}\big)
\bigg(\frac{\abs{x-\xi}}{1+2^\mu\abs{x-\xi}}\bigg)^{k}
\eta_{\nu,m_0}(\xi)\, \eta_{\mu,m_1-k}(x-\xi) \,d\xi
    \\
    &\le 2^{k(\nu-\mu)}\, \eta_{\nu,m_0} \ast \eta_{\mu,m_1-k}(x),
  \end{align*}
  proving the assertion.
\end{proof}

\begin{lemma}[``The $r$-trick"]\label{lem:est_g}
  Let $r>0$, $\nu \geq 0$ and $m > n$. Let $x \in \R^n$.
  Then there exists $c =c(r,m,n) > 0$ such that for all
  $g \in \mathcal{S}'$ with $\supp \hat{g}
  \subset \set{ \xi \,:\, \abs{\xi} \leq 2^{\nu+1}}$, we have
\begin{align*}
\abs{g(x)}
   \leq c\, \big(\eta_{\nu,m} \ast
  \abs{g}^r(x)\big)^{{1}/{r}}.
\end{align*}
\end{lemma}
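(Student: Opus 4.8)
The plan is to reduce to the classical maximal-function estimate for band-limited functions due to Peetre, Fefferman and Stein, and then convert the maximal function into a convolution with $\eta_{\nu,m}$.

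First I would rescale to the case $\nu = 0$. If $g$ has $\supp\hat g \subset \{\abs\xi \le 2^{\nu+1}\}$, set $g^{(\nu)}(x) := g(2^{-\nu}x)$, so that $\supp\widehat{g^{(\nu)}} \subset \{\abs\xi \le 2\}$; proving the estimate for $g^{(\nu)}$ at the point $2^\nu x$ with the kernel $\eta_{0,m}$ and then undoing the dilation recovers the general statement, because $\eta_{\nu,m}\ast \abs g^r(x) = 2^{n\nu}\int \eta_m(2^\nu(x-y))\abs{g(y)}^r\,dy$ transforms correctly under the substitution $y = 2^{-\nu}z$. So it suffices to treat $g\in\mathcal S'$ with $\supp\hat g\subset\{\abs\xi\le 2\}$ and show $\abs{g(0)}\le c(\eta_{0,m}\ast\abs g^r(0))^{1/r}$, or rather the same with $0$ replaced by an arbitrary $x$; by translating we may as well fix the point to be the origin.

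Next I would invoke the Peetre maximal inequality: for band-limited $g$ (spectrum in the ball of radius $2$) and any $t>0$ there is a constant $c=c(t,n)$ with
\begin{align*}
  \sup_{z\in\R^n}\frac{\abs{g(x-z)}}{(1+\abs z)^{n/t}} \le c\,\big(M(\abs g^t)(x)\big)^{1/t},
\end{align*}
where $M$ is the Hardy--Littlewood maximal operator. Apply this with $t=r$ (if $r\ge 1$ one can instead just use $t=1$ and Hölder, but taking $t=r$ works uniformly). Evaluating at $z=0$ gives $\abs{g(x)}\le c\,(M(\abs g^r)(x))^{1/r}$. Finally, since $m>n$, the kernel $\eta_m$ is a radially decreasing integrable majorant, so the standard pointwise bound $M(\abs g^r)(x)\le c\,\eta_{\,0,m}\ast\abs g^r(x)$ holds (dominate the averages over balls centred at $x$ by the convolution against $(1+\abs\cdot)^{-m}$ via a dyadic annulus decomposition). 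Combining, $\abs{g(x)}\le c\,(\eta_{0,m}\ast\abs g^r(x))^{1/r}$, and rescaling finishes the proof.

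The main obstacle is establishing the Peetre maximal inequality in the regime $r<1$: the naive approach via Bernstein's inequality only controls $\sup\abs g$ in terms of an $L^p$-norm for $p\ge 1$, so one genuinely needs the refined argument. The standard route is to write $g = g\ast\varphi$ for a Schwartz $\varphi$ with $\hat\varphi\equiv 1$ on $\{\abs\xi\le 2\}$, estimate $\abs{g(x-z)}\le \int\abs{g(x-z-w)}\abs{\varphi(w)}\,dw$, split the integral into the region $\abs w \le \abs z + 1$ and its complement, and on the first region bound $\abs{g(x-z-w)}^r$ by the average of $\abs g^r$ over a ball of radius comparable to $\abs z+1$ using the rapid decay of $\varphi$ together with a self-improving argument (absorbing a power of $\sup\abs{g(x-\cdot)}(1+\abs\cdot)^{-n/r}$ back into the left-hand side, which is legitimate once one checks this supremum is finite — true since $g$ is a tempered distribution with compact spectrum, hence a slowly growing smooth function). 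This is classical and I would either cite it (e.g.\ Triebel's monograph) or include the absorption argument in a few lines; everything else is routine dilation bookkeeping and the elementary domination of $M$ by $\eta_{0,m}\ast$.
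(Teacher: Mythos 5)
Your overall strategy (reduce to a Peetre/Plancherel--Polya type estimate for band-limited functions) is the right one, and the rescaling and translation bookkeeping is fine, but the final link in your chain is false. You claim $M(\abs{g}^r)(x)\le c\,\eta_{0,m}\ast\abs{g}^r(x)$ because $\eta_m$ is a radially decreasing integrable majorant; the classical fact is the \emph{reverse} domination, $\eta_{0,m}\ast f(x)\le c\,Mf(x)$ for $f\ge 0$ (this is what a dyadic annulus decomposition gives, and it is the content of Lemma~\ref{lem:etavsM} in the paper). The inequality in the direction you need fails: take $f=\chi_{B^n(z_0,1)}$ with $\abs{z_0}=R$ large and $x=0$; then $Mf(0)\gtrsim R^{-n}$ from the average over $B^n(0,2R)$, while $\eta_{0,m}\ast f(0)\approx R^{-m}$, and $m>n$ makes the ratio blow up. The same failure occurs with $f=\abs{g}^r$ for a band-limited bump $g$ translated far from $x$, so band-limitedness does not rescue the step. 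In short, the bound $\abs{g(x)}\le c\,(M(\abs{g}^r)(x))^{1/r}$ you reach via Peetre's inequality at $z=0$ is genuinely \emph{weaker} than the assertion of the lemma, and you cannot upgrade it by reversing the $M$-versus-$\eta\ast$ comparison.

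The repair is to aim the machinery you sketch in your last paragraph (write $g=g\ast\varphi$, split the integral, and absorb a power of the weighted supremum $\sup_z\abs{g(x-z)}(1+2^\nu\abs{z})^{-m/r}$ into the left-hand side) directly at the pointwise Plancherel--Polya inequality
\begin{align*}
\abs{g(y)}^r \le c \int_{\R^n}\frac{2^{\nu n}\,\abs{g(w)}^r}{(1+2^\nu\abs{y-w})^{m}}\,dw = c\,\eta_{\nu,m}\ast\abs{g}^r(y),
\end{align*}
rather than at the version with the Hardy--Littlewood maximal function. That inequality \emph{is} the lemma, so nothing further is needed once it is established. This is exactly what the paper does: it quotes inequality (2.11) of Frazier--Jawerth \cite{FJ1}, which bounds $\sup_{z\in Q}\abs{g(z)}^r$ by $c\,2^{\nu n}\sum_{l}(1+\abs{l})^{-m}\int_{Q_{\nu,k+l}}\abs{g}^r$, and then observes that this discrete sum is comparable to the convolution $\eta_{\nu,m}\ast\abs{g}^r(x)$ since $1+2^\nu\abs{x-y}\approx 1+\abs{l}$ for $x\in Q_{\nu,k}$, $y\in Q_{\nu,k+l}$.
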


\begin{proof}
Fix a dyadic cube $Q=Q_{\nu,k}$ and $x\in Q$.
By (2.11) of \cite{FJ1} we have
\begin{align*}
g(x) \le \sup_{z \in Q} |g(z)|^r \leq c_r \, 2^{\nu n} \sum_{l \in
\Z^n} (1+ \abs{l})^{-m} \int_{Q_{\nu, k+l}} \abs{g(y)}^r\,dy.
\end{align*}
In the reference this was shown only for $m=n+1$, but it is easy to see that it
is also true for $m>n+1$. Now for $x \in Q_{\nu, k}$ and $y \in
Q_{\nu, k+l}$, we have $|x-y| \approx 2^{-\nu} |l|$ for large $l$,
hence, $1+2^\nu |x-y| \approx 1+ |l|$.
{}From this we conclude that
\begin{align*}
\sup_{z \in Q} |g(z)|^r  &\leq c_{r,n} \, \sum_{l \in \Z^n}
\int_{Q_{\nu, k+l}} (1+ 2^{\nu}\abs{x-y})^{-m} \, \abs{g(y)}^r \,dy\\
& = c_{r, n} \, \int_{\R^n} 2^{\nu n} (1+ 2^\nu \abs{x-z})^{-m}
\abs{g(z)}^r\,dz = c_{r, n}\, \eta_{\nu,m} \ast \abs{g}^r(x).
\end{align*}
Now, taking the $r$-th root, we obtain the claim.
\end{proof}


\subsection*{Acknowledgment}

We would like to thank H.-G.~Leopold for useful discussions on how our
spaces relate to his spaces of variable smoothness, and
Tuomas Hyt{\"o}nen for a piece of advice on Fourier analysis.

The first author thanks Arizona State University
and the University of Oulu for their hospitality. All authors
thank the W.\ Pauli Institute, Vienna, at which it was possible to
complete the project.



\end{document}